\numberwithin{equation}{section}
\newtheorem{theorem}{Theorem}[section]
\newtheorem{lemma}[theorem]{Lemma} 
\newtheorem{proposition}[theorem]{Proposition}
\newtheorem{definition}[theorem]{Definition}
\newtheorem{remark}[theorem]{Remark}
\newtheorem{corollary}[theorem]{Corollary}
\numberwithin{equation}{section}
\begin{document}

\title[  Problems involving nonlocal integrodifferential operators without (AR) condition]{ Multiplicity results for elliptic problems involving nonlocal integrodifferential operators without Ambrosetti-Rabinowitz condition}

\author{Lauren Maria Mezzomo Bonaldo, Olímpio Hiroshi Miyagaki and Elard Juárez Hurtado}
\thanks{Department of Mathematics Universidade Federal de S\~ao Carlos.
Carlos,  S\~ao Carlos SP, Brazil.
E-mail: laurenmbonaldo@hotmail.com. Supported
by CAPES}\, \thanks{Department of Mathematics Universidade Federal de S\~ao Carlos.
Carlos,  S\~ao Carlos SP, Brazil.
E-mail: ohmiyagaki@gmail.com Supported
by CAPES} \,
\thanks{Department of Mathematics, Universidade de Brasília, Brasília,  DF, Brazil.
E-mail: elardjh2@gmail.com Supported
by CAPES}
\subjclass[2010]{35A15, 35B38, 35D3, 35J60, 35R11, 46E35, 47G2O}
\keywords{Nonlocal integrodifferential  operator, fractional Sobolev space with variable exponents,   Mountain Pass theorem, Fountain
theorem, Dual Fountain theorem}

\begin{abstract}
In this paper, we study  the existence and multiplicity of weak solutions for a general class of elliptic equations \eqref{e1.1} in a smooth bounded domain, driven by a nonlocal integrodifferential operator $\mathscr{L}_{\mathcal{A}K} $  with Dirichlet boundary conditions involving variable exponents without Ambrosetti and Rabinowitz type growth conditions.  Using different versions of the Mountain Pass Theorem, as well as, the Fountain Theorem and Dual Fountain Theorem with Cerami condition, we  obtain  the existence of weak solutions for the problem  \eqref{e1.1}  and we show that the problem treated has at least one nontrivial solution for any parameter $\lambda >0$ small enough as well as that the solution blows up, in the fractional Sobolev norm, as $\lambda \to 0$. 
Moreover, for the case sublinear, by imposing some additional hypotheses on the nonlinearity $f(x,\cdot)$, by using a new version of the symmetric Mountain Pass Theorem due to Kajikiya   \cite{kaji},  we obtain the existence of infinitely many weak solutions which tend to be zero, in the  fractional 
Sobolev norm, for any parameter $\lambda >0$.
As far as we know, the results of this paper are new in the literature.
\end{abstract}
\date{}
\maketitle
\section{Introduction and main results}\label{int}
 \hfill \break
In recent years,  the study of elliptic problems involving fractional operators with variable exponents has become the object of study of many researchers. Problems of this kind have a very interesting theoretic basis very interesting whose integrability and analytical structure require delicate proofing techniques as we can see in \cite{anaour,bahrouni,ky,kaufmann}. As well as,  they have concrete applications in the most diverse fields as optimization, finance, phase transitions, continuum mechanics, image process, game theory, crystal dislocation, quasi-geostrophic flows, peridynamic theory,  see \cite{aut,trud,mol,bucu,caff,ha,elard,ian,palatucci,pati,vaal} and their references. 

 In this sense, in order to expand the results around this theory, in the present paper we are concerned with the existence and multiplicity of weak solutions for elliptic equations involving the nonlocal integrodifferential operators with variable exponents. Namely, we consider the following problem
 \begin{equation}\tag{$\mathscr{P}_{\lambda}$}\label{e1.1}
\left\{\begin{array}{rcll}
\mathscr{L}_{\mathcal{A}K} u&=&\lambda f(x,u) & \textrm{ in } \Omega,\\
u&=&0 & \textrm{ in }\mathbb{R}^{N}\setminus\Omega,
\end{array}\right.
\end{equation}
 where $\lambda>0$ is a real parameter, $\Omega\subset\mathbb{R}^{N},$ $N\geqslant2,$ is a smooth bounded domain, and to define the nonlocal integrodifferential operator $\mathscr{L}_{\mathcal{A}K}$ we will need the variable exponents  $p(x):=p(x,x)$ for all $x \in \mathbb{R}^{N}$ with $p \in C(\mathbb{R}^{N} \times\mathbb{R}^{N})$  satisfying: 
  \begin{equation}\label{a23}\tag{$\mathit{p_1}$}
  \begin{split}
   & p \mbox{ is symmetric, that is, } p(x,y)=p(y,x),  \\&  1< p^{-} :=\displaystyle{\inf _{(x,y) \in \mathbb{R}^{N}\times \mathbb{R}^{N}}}\,p(x,y) \leqslant \displaystyle{\sup _{(x,y)\in \mathbb{R}^{N}\times \mathbb{R}^{N}}}\,p(x,y):= p^{+}<\frac{N}{s},\hspace{0.2cm} s \in(0,1),
 \end{split}
  \end{equation}
  and we consider  the fractional critical variable exponent related to $p\in C(\mathbb{R}^{N} \times\mathbb{R}^{N})$ defined by $\displaystyle{p^{\star}_{s}(x)= \frac{Np(x)}{N-sp(x)}}.$ 
  
 The nonlocal integrodifferential operator $\mathscr{L}_{\mathcal{A}K}$ is defined on suitable fractional Sobolev spaces (see Section 2) by
 \begin{equation*}
  \mathscr{L}_{\mathcal{A}K}u(x)= P.V. \int_{\mathbb{R}^{N}}\mathcal{A}(u(x)-u(y))K(x,y)\,dy, \hspace{0.2cm} x \in \mathbb{R}^{N},
  \end{equation*}
  where $P.V.$ is the principal value.
 %and 
 %\begin{equation}\label{a24}\tag{$\mathit{p_2}$}
  %p \mbox{ symmetric, } 1< \underline{p}^{-} :=\displaystyle{\inf _{(x,y) \in \overline{\Omega} \times \overline{\Omega}}}\,p(x,y) \leqslant p(x,y) \leqslant \displaystyle{\sup _{(x,y)\in \overline{\Omega}\times \overline{\Omega}}}\,p(x,y):= \underline{p}^{+}< +\infty
  %\end{equation}
   
 The map $\mathcal{A}:\mathbb{R}\to \mathbb{R}$ be a measurable function  satisfying the next assumptions:
\begin{itemize}
	\item[ $(a_{1})$] $\mathcal{A}$ is continuous, odd, and the map $\mathscr{A}:\mathbb{R}\to \mathbb{R}$ given by
	$$\mathscr{A}(t):= \int^{|t|}_{0} \mathcal{A}(\tau) d\tau$$
is  strictly convex;
	\item[$(a_{2})$]  There exist positive constants $c_{\mathcal{A}}$ and $C_{\mathcal{A}}$, such that  for  all $t \in \mathbb{R}$ and  for all $(x,y)\in \mathbb{R}^{N}\times\mathbb{R}^{N}$
\begin{equation*}
\mathcal{A}(t)t\geqslant  c_{\mathcal{A}} \vert t \vert^{p(x,y)} \hspace{0.3cm} \mbox{ and } \hspace{0.3cm} \vert \mathcal{A}(t) \vert\leqslant C_{\mathcal{A}} \vert t \vert^{p(x,y)-1};
\end{equation*}
\item[$(a_{3})$]$\mathcal{A}(t)t\leqslant p^{+}\mathscr{A}(t)$ for all $t \in \mathbb{R}$.
\end{itemize}

The kernel $K: \mathbb{R}^{N}\times \mathbb{R}^{N} \to \mathbb{R}^{+}$ be a measurable function  satisfying the following property:
\begin{itemize}
	\item[ $(\mathcal{K})$] There exist constants $b_0$ and $b_1$, such that $0<b_0\leqslant b_1$, 
	$$b_0\leqslant K(x,y)|x-y|^{N+sp(x,y)}\leqslant b_1 \mbox{ for all } (x,y)\in \mathbb{R}^{N}\times\mathbb{R}^{N}  \mbox{ and } x\neq y.$$
	\end{itemize}

 We assume that $f:\Omega\times \mathbb{R}\to \mathbb{R}$ is a Carath\'eodory function and satisfies:   
 \begin{itemize}
 \item[$(f_{0})$] There exists a positive constant $c_{1}$ such that $f$ satisfies the subcritical growth condition 
$$|f(x,t)|\leqslant c_{1}(1+|t|^{\vartheta(x)-1})$$
for all $(x,t)\in \Omega\times \mathbb{R},$ where $\vartheta\in C(\overline{\Omega}),$ $1<p^{+}<\underline{\vartheta}^{-}\leqslant \vartheta(x)\leqslant \underline{\vartheta}^{+}<p^{\star}_{s}(x)$ for $x\in\overline{\Omega},$ and  $\underline{\vartheta}^{-}:= \displaystyle{\inf _{x \in \overline{\Omega}}}\,\vartheta(x) $,  $\underline{\vartheta}^{+}:= \displaystyle{\sup _{x \in \overline{\Omega}}}\,\vartheta(x); $

 \item[$(f_{1})$] 
 $\displaystyle{\lim_{|t|\to +\infty}\frac{F(x,t)}{|t|^{p^{+}}}=+\infty}$ uniformly for almost everywhere a.e. $x\in \Omega,$ that is, $f$ is $p^{+}$-superlinear at infinity, the function $F$ is the primitive of $f$ with respect to the second variable, that is, $F(x,t):=\displaystyle{\int_{\Omega}f(x,\tau)\,d\tau};$

\item[$(f_{2})$] $f(x,t)= o(|t|^{p^{+}-1}),$ as $t\to 0,$ uniformly a.e. $x\in\Omega;$

\item[$(f_{3})$] There exists a positive constant $\mathsf{c}_{\star}>0$ such that
$$\mathcal{G}(x,t)\leqslant \mathcal{G}(x,\tau)+\mathsf{c}_{\star}$$
for all $x\in \Omega,\,\,0<t<\tau$ or $\tau<t<0,$ where $\displaystyle{\mathcal{G}(x,t):=tf(x,t)-p^{+} F(x,t)}.$
\end{itemize}
\noindent With intending to find infinite solutions is natural to impose certain symmetry condition on the nonlinearity. In the sequel, we will assume the following assumption on $f:$
\begin{itemize}
\item[$(f_{4})$] $f$ is odd in $t,$ that is, 
$f(x,-t)=-f(x,t)$ for all $x\in\Omega$ and $t\in \mathbb{R}$.
\end{itemize}
 
 \noindent In addition, to prove that the Euler Lagrange functional associated to  the problem \eqref{e1.1} verifies the Cerami condition $(C)_c$, we assume that the functions $\mathscr{A} $ and $ \mathcal{A}$ satisfy the following condition
\begin{itemize}
\item[$(a_{4})$]$\mathcal{H}(at)\leqslant \mathcal{H}(t)$ for all $t \in \mathbb{R}$ and $a\in[0,1]$ where $\mathcal{H}(t)=p^{+}\mathscr{A}(t)-\mathcal{A}(t)t.$
\end{itemize}

 A    typical mathematical generalization    for $\mathcal{A}$ and $K$ verifying  $(a_1)$-$(a_4)$ and $(\mathcal{K})$ respectively is $\mathcal{A}(t)=|t|^{p(x,y)-2}t$ and  $K(x,y)= |x-y|^{-(N+sp(x,y))}$.  When  $\mathcal{A}$ and $K$ are these type, the operator $\mathscr{L}_{\mathcal{A}K}$ is the  fractional $p(\cdot)$-Laplacian operator $(-\Delta)^{s}_{p(\cdot)}$, which is defined by
\begin{equation*}
  (-\Delta)^{s}_{p(\cdot)}u(x)= P.V. \int_{\mathbb{R}^{N}}\frac{|u(x)-u(y)|^{p(x,y)-2}(u(x)-u(y))}{|x-y|^{N+sp(x,y)}}\,dy \mbox{ for all } x \in \mathbb{R}^{N}.
  \end{equation*}
For further  details on the fractional $p(\cdot)$-Laplacian operator see for instance \cite{anaour,bahrouni,ky,kaufmann}. Furthermore, it is worth to note that the assumptions $(a_1)$-$(a_4)$ and $(\mathcal{K})$ were similarly introduced  in \cite{mazon,moli,brezish,colau,emdi,warma,elard1,elard2,kim,migio,puccisara,vazquez}.
 
 \noindent In literature problem \eqref{e1.1}  was investigated primarily in the context of equations driven by the Laplacian (semilinear problems), i.e.,   was considered the following  problem   class
 \begin{equation}\label{e1.10}
\left\{\begin{array}{rcll}
-\Delta u &=& \lambda f(x,u) & \textrm{ in } \Omega,\\
u &=& 0 & \textrm{ in }\partial\Omega.
\end{array}\right.
\end{equation}

\noindent Particularly  Ambrosetti and Rabinowitz in  \cite{ar},   were the first to use the Mountain Pass Theorem to prove that problems type above  admits a solution when  the  following condition of the nonlinearity to $f(x, \cdot)$, well known in the literature as the Ambrosetti-Rabinowitz condition ((AR) condition, for short), is employed:  there exist $\mu >2$ and  $M>0$ such that  
\begin{equation}\label{integra}
0< \mu F(x,t) \leqslant f(x,t)t  \mbox{ for all } x \in \Omega \mbox{ and for all } |t|\geqslant M
\end{equation}
where $f:\overline{\Omega}\times \mathbb{R}\to \mathbb{R}$ is a continuous function and $\displaystyle{F(x,t)=\int_{0}^{t}f(t,z)\,dz}$.
The (AR)   condition is   a tool essential to obtain weak solutions of superlinear problems. Moreover,  their main role  is to ensure the compactness, more specifically, the limitation of the Palais-Smale sequence required by minimax arguments. But, the (AR) condition is some restricted and eliminates some nonlinearities of $ F (x, \cdot).$ Direct integration of \eqref{integra} leads us to the following weak condition for the potential function $ F (x, t), $

 \begin{equation} \label{condicao2}
F(x,t) \geqslant \alpha_1|t|^{\mu}-\alpha_2 \mbox{ for  all } x \in \overline{\Omega} \mbox{ and  for all } t \in \mathbb{R} \mbox{ with } \alpha_1, \alpha_2 >0.
\end{equation}

However, even so, there are still many functions that are superlinear at infinity and that not satisfy the (AR) condition.  Indeed, the condition  \eqref{condicao2}   implies  that the function $ F (x, \cdot) $ displays at least one polynomial growth $ \mu $ near to $ \pm \infty $ and since that $ \mu> p$
\begin{equation} \label{condicao3}
\lim_{|t|\to +\infty}\frac{F(x,t)}{|t|^{p}}=+\infty.
\end{equation}
The function $f(x, t) = t\ln(1 +  t) $  is a example   of   an function that satisfies the condition \eqref{condicao3} but does not satisfy the condition \eqref{condicao2} and  therefore also  not satisfies  the condition \eqref{integra}.
%Therefore, the condition \eqref{condicao3} is a much %weaker condition than the (AR)-condition and %characterizes the nonlinearity $ f $ to be superlinear %at infinity. 

In this sense,  the study for problems involving not only Laplacian operator but also $ p(\cdot) $-Laplacian operator without the (AR) condition has become the focus for many researchers and the references in the literature have steadily increased.  We mention some papers as   \cite{ge,elard1,ji,li,zhaoli,shi bo,miyagaki} where the authors established at least nontrivial  solution existence to the problems of type \eqref{e1.10} without (AR) condition and in general, the main tool used to obter the results is Mountain Pass Theorem with Cerami condition.

Furthemore to this significant advance for local operators without the (AR) condition, recently, some researchers have started to study problems of type  \eqref{e1.10} nonlocal  without the (AR) condition.  More specifically, in the paper \cite{zang} considering   $\mathcal{A}(t)=t$, $K(x,y)=|x-y|^{N +2s}$, and $\lambda=1$, the authors proved that the problem \eqref{e1.1} has infinite solutions using Fountain Theorem. In the paper \cite{pei} the author, for the case fractional $p$-Laplacian, $K(x,y)=|x-y|^{N+2s}$ and $\lambda=1$ studies the existence of a weak solution to the problem \eqref{e1.1}  by  Mountain Pass Theorem combined with the fractional Moser–Trudinger inequality.  Already in \cite {zhou1}, for $ \mathcal{A}(t) = t $ and some similar assumptions for the $ K $ kernel, the author proves that $ \lambda_ {0}> 0 $ exists such that the problem  \eqref{e1.1}  supports two distinct weak solutions for every $ \lambda \in (0, \lambda_ {0}) $ using a new critical  points result introduced by \cite[Theorem 2.6] {morse}.

Therefore, motivated by the above references, mainly by the papers \cite{elard1,miyagaki,zang,zhou1},  the  present paper is concerned about with the existence and multiplicity of solutions to the problem \eqref{e1.1} without the (AR) condition.  In particular,   we extend, we complement and improve some of the references cited above once our problem \eqref{e1.1} involves general nonlocal integrodifferential operators. 
Indeed,  is important note that the operator $\mathscr{L}_{\mathcal{A}K} $ involving in problem \eqref{e1.1} is, in particular, the fractional $ p(\cdot)$-Laplacian operator, so it is nonhomogeneous and problems involving this operator with variable exponents can not use Lagrange Multiplier Theorem, as can be done for operators involving constant exponents.  It still is interesting to point  that the  operator $\mathscr{L}_{\mathcal{A}K} $ is no just a mere extension the fractional $ p(\cdot)$-Laplacian operator, because from conditions  $(a_1)$-$(a_4)$ and $(\mathcal{K}),$  map $ \mathcal{A} $ and kernel $ K $ are quite general and  $ K $  includes singular kernels.
 Furthermore,  we note that solution space for the problem \eqref{e1.1} must be a fractional Sobolev space with variable exponents as we can see in \cite{anaour,bahrouni,ky,kaufmann}.  Thus,  due to the characterization of our problem \eqref{e1.1}  it was required to define a new space in this paper.  Then   inspired by the spaces mentioned above and the space $ W^{s, p}_{0}(\Omega) $ (defined in \cite{ian1}), we define the following space
\begin{equation*}\label{space}
  \mathscr{W} = W^{s,p(\cdot, \cdot)}_{0}:= \{u \in W^{s, p(\cdot, \cdot)}(\mathbb{R}^{N}): u=0 \mbox{ a.e. in }   \mathbb{R}^{N}\setminus \Omega \},
 \end{equation*}
  which is a separable and reflexive Banach space  and we prove an important   result of compact and continuous embedding for $\mathscr{W}$, for more details see Subsection \ref{espacop1}.

Moreover, as our problem has a structure variational and our  objective is to prove  existence, multiplicity and comportment of solution of the problem \eqref{e1.1},   is  essential verify  that the operator  $\Phi'$ (see Lemma \ref{ll1}) satisfies the property $(S_{+})$, which is a property of compactness of the operator and usually is used for other property of compactness as for example the condition Palais-Smale or Cerami  for the Euler Lagrange functional $\Psi_{\lambda}$ defined in  \eqref{euler}. 

Lastly, we will  obtain the existence of weak solutions to the problem \eqref{e1.1} and,    under some considerations, get a nontrivial solution for any parameter $\lambda > 0 $ small enough such that the solution blows up, in the fractional Sobolev norm, as $\lambda \to 0^{+}$.  For this,  along of the paper,   we will use different versions of the Mountain Pass Theorem, as well as, the Fountain and Dual Theorem with Cerami condition.   Moreover, by imposing additional hypotheses on the nonlinearity $f (x, \cdot)$,  we get the existence of infinitely many weak solutions which tend to be zero, in the fractional Sobolev norm, for any parameter $\lambda >0$ by using the new version of the symmetric of the Pass Mountain Theorem introduced by Kajikya  
\cite{kaji}.
 
 Our main results are the following Theorems.

\begin{theorem}\label{cc}
Assume $(a_{1})$-$(a_{4}),\,\,(\mathcal{K}),$ and $f$ satisfies $(f_{0})$-$(f_{3})$. Then  problem \eqref{e1.1} has at least one nontrivial weak solution in  $\mathscr{W}$ for all $\lambda>0$.
\end{theorem}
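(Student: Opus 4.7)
The plan is to apply a Mountain Pass Theorem with the Cerami $(C)_c$ condition (in place of the usual Palais--Smale), since the absence of the Ambrosetti--Rabinowitz condition makes verifying bounded Palais--Smale sequences unreliable. I would first set up the Euler--Lagrange functional
\begin{equation*}
\Psi_\lambda(u)=\Phi(u)-\lambda\int_\Omega F(x,u)\,dx,\qquad \Phi(u):=\iint_{\mathbb{R}^{2N}}\mathscr{A}(u(x)-u(y))K(x,y)\,dx\,dy,
\end{equation*}
on the space $\mathscr{W}$, and verify via the growth hypothesis $(a_2)$, the kernel bound $(\mathcal{K})$, the subcritical condition $(f_0)$ and the embedding results for $\mathscr{W}$ that $\Psi_\lambda\in C^1(\mathscr{W},\mathbb{R})$ with critical points being exactly the weak solutions of \eqref{e1.1}.

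Next I would establish the Mountain Pass geometry. The bound from below near the origin follows from $(f_2)$ and $(f_0)$: for any $\varepsilon>0$ there exists $C_\varepsilon>0$ such that $|F(x,t)|\leq \varepsilon|t|^{p^+}+C_\varepsilon|t|^{\vartheta(x)}$; combined with $(a_2)$, the modular--norm relations on $\mathscr{W}$, and the continuous embedding $\mathscr{W}\hookrightarrow L^{\vartheta(\cdot)}(\Omega)$, this yields $\rho,\alpha>0$ with $\Psi_\lambda(u)\geq\alpha$ whenever $\|u\|_{\mathscr{W}}=\rho$. The existence of $e\in\mathscr{W}$ with $\|e\|_{\mathscr{W}}>\rho$ and $\Psi_\lambda(e)<0$ comes from the $p^+$-superlinearity $(f_1)$: fix any nonzero $\varphi\in C_c^\infty(\Omega)$ and estimate $\Phi(t\varphi)\leq C t^{p^+}$ using $(a_3)$ (integrating the inequality $\mathcal{A}(\tau)\tau\leq p^+\mathscr{A}(\tau)$ gives $\mathscr{A}(t\tau)\leq t^{p^+}\mathscr{A}(\tau)$ for $t\geq 1$), so $\Psi_\lambda(t\varphi)/t^{p^+}\to -\infty$ as $t\to+\infty$ by Fatou applied to the $F$-term.

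The main obstacle is the Cerami condition. Let $(u_n)\subset\mathscr{W}$ satisfy $\Psi_\lambda(u_n)\to c$ and $(1+\|u_n\|_{\mathscr{W}})\|\Psi'_\lambda(u_n)\|_*\to 0$. For boundedness, I would argue by contradiction: suppose $\|u_n\|_{\mathscr{W}}\to\infty$ and set $v_n=u_n/\|u_n\|_{\mathscr{W}}$, passing to a weak limit $v$. Using $(a_3)$ and $(a_4)$ together with $(f_3)$ I would show
\begin{equation*}
p^+\Psi_\lambda(u_n)-\langle\Psi'_\lambda(u_n),u_n\rangle=\iint_{\mathbb{R}^{2N}}\mathcal{H}(u_n(x)-u_n(y))K(x,y)\,dx\,dy+\lambda\int_\Omega\mathcal{G}(x,u_n)\,dx,
\end{equation*}
where the left-hand side is bounded by $c+o(1)$ from the Cerami hypothesis. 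If $v\not\equiv 0$, then on $\{v\neq 0\}$ we have $|u_n|\to\infty$, and $(f_1)$ together with Fatou yields a contradiction with $\Psi_\lambda(u_n)/\|u_n\|_{\mathscr{W}}^{p^+}$ bounded (the latter coming from $\Phi(u_n)\leq C\|u_n\|_{\mathscr{W}}^{p^+}$). If $v\equiv 0$, I would use Vainberg's trick: choose $t_n\in[0,1]$ with $\Psi_\lambda(t_n u_n)=\max_{t\in[0,1]}\Psi_\lambda(tu_n)$; for arbitrary $R>0$, test with $(2R)^{1/p^-}v_n$ and use $(f_2),(f_0)$ to show $\Psi_\lambda(t_nu_n)\to+\infty$. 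Since $\Psi_\lambda(0)=0$ and $\Psi_\lambda(u_n)\to c$ is finite, $t_n\in(0,1)$ eventually and $\langle\Psi'_\lambda(t_nu_n),t_nu_n\rangle=0$; combining this with $(a_4)$ gives
\begin{equation*}
p^+\Psi_\lambda(t_nu_n)=\iint_{\mathbb{R}^{2N}}\mathcal{H}(t_n(u_n(x)-u_n(y)))K(x,y)\,dx\,dy+\lambda\int_\Omega\mathcal{G}(x,t_nu_n)\,dx,
\end{equation*}
and $(a_4)$, $(f_3)$ bound the right-hand side by $c+\lambda\mathsf{c}_\star|\Omega|+o(1)$, contradicting $\Psi_\lambda(t_nu_n)\to+\infty$. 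Hence $(u_n)$ is bounded. Convergence then follows in the standard way: extract a weakly convergent subsequence $u_n\rightharpoonup u$, use compact embedding $\mathscr{W}\hookrightarrow L^{\vartheta(\cdot)}(\Omega)$ to handle the $f$-term, deduce $\langle\Phi'(u_n),u_n-u\rangle\to 0$, and invoke the $(S_+)$ property of $\Phi'$ (stated earlier) to conclude $u_n\to u$ in $\mathscr{W}$.

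Finally, applying the Mountain Pass Theorem with $(C)_c$ yields a critical point $u\in\mathscr{W}$ with $\Psi_\lambda(u)=c\geq\alpha>0=\Psi_\lambda(0)$, so $u$ is a nontrivial weak solution of \eqref{e1.1}. The delicate point I anticipate is the boundedness argument in the Cerami condition when $v\equiv 0$, where the interaction between the nonhomogeneity coming from the variable exponent $p(x,y)$ and the non-AR condition $(f_3)$ forces the Vainberg-type auxiliary sequence $t_nu_n$ to be handled with care.
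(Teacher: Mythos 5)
Your proposal is correct and follows essentially the same route as the paper: Mountain Pass with the Cerami condition, Mountain Pass geometry from $(f_0)$, $(f_2)$ near the origin and $(f_1)$ at infinity, boundedness of Cerami sequences via Jeanjean's auxiliary maximizer $t_n\in[0,1]$ combined with $(a_4)$ and $(f_3)$, and strong convergence from the $(S_+)$ property of $\Phi'$. The only cosmetic slip is that the Cerami estimate bounds $p^{+}\Psi_{\lambda}(u_n)-\langle\Psi_{\lambda}'(u_n),u_n\rangle$ by $p^{+}c+o(1)$ rather than $c+o(1)$, which does not affect the argument.
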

 
 \begin{theorem}\label{t7}
Assume $(a_{1})$-$(a_{4}),\,\,(\mathcal{K}),$ and  $f$ satisfies $(f_{0})$-$(f_{4})$. Then  problem \eqref{e1.1} has infinitely many solutions in $\mathscr{W}$ for all $\lambda>0$.
\end{theorem}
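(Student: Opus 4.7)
The plan is to deduce Theorem \ref{t7} from the Fountain Theorem with Cerami condition applied to the Euler--Lagrange functional
\[
\Psi_{\lambda}(u)=\int_{\mathbb{R}^{2N}}\mathscr{A}(u(x)-u(y))K(x,y)\,dx\,dy-\lambda\int_{\Omega}F(x,u)\,dx,
\]
which is of class $C^{1}$ on $\mathscr{W}$, and is now even thanks to the new assumption $(f_{4})$. I would fix a Schauder basis $\{e_{j}\}_{j\in\mathbb{N}}$ of the separable reflexive Banach space $\mathscr{W}$ and set $X_{j}:=\mathbb{R}e_{j}$, $Y_{k}:=\bigoplus_{j=1}^{k}X_{j}$, $Z_{k}:=\overline{\bigoplus_{j\geqslant k}X_{j}}$, so that $Y_{k}$ is finite dimensional. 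Since all norms on $Y_{k}$ are equivalent, the Fountain scheme reduces to verifying the standard geometric ingredients on the decomposition $\mathscr{W}=Y_{k}\oplus Z_{k}$ together with the compactness condition.

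First, I would check the Cerami condition $(C)_{c}$ for every $c\in\mathbb{R}$. This is the main technical step and the one I expect to be the principal obstacle, since the absence of the (AR) condition prevents a direct growth estimate on $F$; instead one must exploit $(a_{4})$ together with $(f_{3})$ to show that Cerami sequences remain bounded in $\mathscr{W}$, and then invoke the $(S_{+})$ property of $\Phi'$ recorded in Lemma \ref{ll1} to upgrade weak to strong convergence. Concretely, for a sequence $(u_{n})\subset\mathscr{W}$ with $\Psi_{\lambda}(u_{n})\to c$ and $(1+\|u_{n}\|)\Psi_{\lambda}'(u_{n})\to 0$, I would combine $p^{+}\Psi_{\lambda}(u_{n})-\langle\Psi_{\lambda}'(u_{n}),u_{n}\rangle$ with the inequality $\mathcal{H}(at)\leqslant\mathcal{H}(t)$ from $(a_{4})$ and the quasi-monotonicity of $\mathcal{G}$ from $(f_{3})$ to bound the modular of $u_{n}$, and hence $\|u_{n}\|$ via the modular/norm relation for the fractional $p(\cdot,\cdot)$-Gagliardo seminorm. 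Passing to a weakly convergent subsequence, the subcritical growth $(f_{0})$ combined with the compact embedding $\mathscr{W}\hookrightarrow\hookrightarrow L^{\vartheta(\cdot)}(\Omega)$ proved in Subsection \ref{espacop1} kills the nonlinear term in $\langle\Psi_{\lambda}'(u_{n})-\Psi_{\lambda}'(u),u_{n}-u\rangle$, and the $(S_{+})$ property then yields $u_{n}\to u$ in $\mathscr{W}$.

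Next, I would verify the two geometric conditions of the Fountain Theorem. For the linking on $Z_{k}$: using $(f_{0})$ and $(f_{2})$ one obtains $|F(x,t)|\leqslant\varepsilon|t|^{p^{+}}+C_{\varepsilon}|t|^{\vartheta(x)}$, so for $u\in Z_{k}$ with large norm one has
\[
\Psi_{\lambda}(u)\geqslant C_{1}\|u\|^{p^{-}}-\lambda\varepsilon C_{2}\|u\|^{p^{+}}-\lambda C_{\varepsilon}\beta_{k}^{\underline{\vartheta}^{-}}\|u\|^{\underline{\vartheta}^{+}},
\]
where $\beta_{k}:=\sup_{u\in Z_{k},\ \|u\|=1}|u|_{\vartheta(\cdot)}$. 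A standard argument (as in the variable-exponent Fountain literature) gives $\beta_{k}\to 0$; optimizing in $\|u\|$ at a scale $r_{k}\to\infty$ then yields $\inf_{u\in Z_{k},\ \|u\|=r_{k}}\Psi_{\lambda}(u)\to+\infty$. For the cap on $Y_{k}$: by $(f_{1})$, for every $M>0$ there is $C_{M}$ with $F(x,t)\geqslant M|t|^{p^{+}}-C_{M}$; since $\dim Y_{k}<\infty$ all norms on $Y_{k}$ are equivalent, so choosing $M$ large enough gives $\Psi_{\lambda}(u)\leqslant C\|u\|^{p^{+}}-\lambda M c_{k}\|u\|^{p^{+}}+\lambda C_{M}|\Omega|\leqslant 0$ for $\|u\|=\rho_{k}$ with $\rho_{k}>r_{k}$ large.

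With $(f_{4})$ giving evenness, $\Psi_{\lambda}(0)=0$, and the three Fountain hypotheses verified, the Fountain Theorem with Cerami condition produces an unbounded sequence of critical values $c_{k}\to+\infty$, whose associated critical points are infinitely many weak solutions of \eqref{e1.1} in $\mathscr{W}$, for every $\lambda>0$.
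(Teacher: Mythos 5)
The paper proves Theorem \ref{t7} with the $\mathbb{Z}_{2}$-symmetric Mountain Pass Theorem (Theorem \ref{psm}), not with the Fountain Theorem: it verifies a low sphere estimate (Lemma \ref{mp1}, inherited from Lemma \ref{lema 1}\,(ii)), a global cap on each finite-dimensional subspace (Lemma \ref{pm2}), and the $(C)_{c}$ condition (Lemma \ref{cerami}). Your $(C)_{c}$ argument and your finite-dimensional cap both match the paper, but the choice of the Fountain Theorem, as you execute it, creates a genuine gap in the key estimate $b_{k}\to+\infty$.

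The problem is the middle term $\lambda\varepsilon C_{2}\|u\|^{p^{+}}$ in your lower bound
\[
\Psi_{\lambda}(u)\geqslant C_{1}\|u\|_{\mathscr{W}}^{p^{-}}-\lambda\varepsilon C_{2}\|u\|_{\mathscr{W}}^{p^{+}}-\lambda C_{\varepsilon}\beta_{k}^{\underline{\vartheta}^{-}}\|u\|_{\mathscr{W}}^{\underline{\vartheta}^{+}}.
\]
On $Z_{k}$, the $\vartheta$-term carries a decay factor $\beta_{k}\to 0$, but the $p^{+}$-term, arising from the bound $\|u\|_{L^{p^{+}}(\Omega)}\leqslant C_{2}\|u\|_{\mathscr{W}}$, does \emph{not}. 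Since the Fountain Theorem requires working on spheres of radius $r_{k}\to+\infty$, one only has $\rho_{\mathscr{W}}(u)\geqslant\|u\|_{\mathscr{W}}^{p^{-}}$ there (Proposition \ref{lw0}\,(b)), and when $p^{+}>p^{-}$ the term $\lambda\varepsilon C_{2}r_{k}^{p^{+}}$ eventually dominates $C_{1}r_{k}^{p^{-}}$ for any fixed $\varepsilon>0$, so $b_{k}\to+\infty$ does not follow. This is why the paper does \emph{not} use $(f_{2})$ in its Fountain argument (Lemma \ref{f1}): there it estimates $|F|$ from $(f_{0})$ alone, obtaining a harmless linear term $\|u\|_{\mathscr{W}}$ instead, at the cost of requiring $\lambda<c_{\mathcal{A}}b_{0}/p^{+}$ --- which is exactly why the Fountain result is stated separately as Theorem \ref{fountain} with that restriction, while Theorem \ref{t7} holds for all $\lambda>0$. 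Your route could be rescued by also defining a tail constant $\gamma_{k}:=\sup\{\|u\|_{L^{p^{+}}(\Omega)}:u\in Z_{k},\ \|u\|_{\mathscr{W}}=1\}$, showing $\gamma_{k}\to 0$ by the same argument as Lemma \ref{beta}, and balancing both terms at a suitably chosen $r_{k}\to\infty$; but as written the estimate is false.

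By contrast, the symmetric Mountain Pass route that the paper takes sidesteps this entirely: the low sphere is \emph{small}, $\|u\|_{\mathscr{W}}=r_{1}<1$, so one can use the sharper modular bound $\rho_{\mathscr{W}}(u)\geqslant\|u\|_{\mathscr{W}}^{p^{+}}$ (Proposition \ref{lw0}\,(c)), and then $(f_{2})$ makes the competing $\varepsilon\|u\|^{p^{+}}$ absorb into the leading $\|u\|^{p^{+}}$ term by taking $\varepsilon$ small relative to $\lambda$, with the $\underline{\vartheta}^{-}>p^{+}$ term vanishing faster. This works uniformly in $\lambda>0$ precisely because $\varepsilon$ and $r_{1}$ may depend on $\lambda$. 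You should replace your Fountain-Theorem framing with Theorem \ref{psm}: item (i) is Lemma \ref{mp1}/Lemma \ref{lema 1}\,(ii), item (ii) is evenness from $(a_{1})$ and $(f_{4})$, and item (iii) is Lemma \ref{pm2}, which you already essentially wrote as the cap on $Y_{k}$.
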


 \begin{theorem}\label{fountain}
Assume $(a_{1})$-$(a_{4}),$ $(\mathcal{K}),$ and  $f$ satisfies $(f_{0})$, $(f_{1})$, $(f_{3})$ and $(f_{4})$. Then, for each $\lambda\in\left(0,\frac{c_{\mathcal{A}}b_{0}}{p^{+}}\right),$ the problem $\eqref{e1.1}$ has infinitely many weak solutions $u_{k} \in \mathscr{W}$, $k \in \mathbb{N}$ such that $\Psi_{\lambda}(u_{k})\to +\infty,$ as $k\to +\infty.$ ($\Psi_{\lambda}$ is defined  \eqref{euler})
\end{theorem}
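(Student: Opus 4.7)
The plan is to apply the Fountain Theorem with the Cerami condition to the Euler--Lagrange functional $\Psi_{\lambda}$ defined in \eqref{euler}. Since $\mathscr{W}$ is separable and reflexive, we fix a Schauder basis $\{e_{j}\}_{j\in\mathbb{N}}$ and set $X_{j}=\mathbb{R}\,e_{j}$, $Y_{k}=\bigoplus_{j=1}^{k}X_{j}$, $Z_{k}=\overline{\bigoplus_{j\geq k}X_{j}}$. First I would check that $\Psi_{\lambda}$ is even: oddness of $\mathcal{A}$ from $(a_{1})$ makes $\mathscr{A}$ even (so the quadratic-type term $\int\!\!\int \mathscr{A}(u(x)-u(y))K(x,y)\,dx\,dy$ is even in $u$), while $(f_{4})$ makes $F$ even in $t$. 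So $\Psi_{\lambda}(-u)=\Psi_{\lambda}(u)$.

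The principal obstacle, and the step that absorbs the assumption $\lambda<c_{\mathcal{A}}b_{0}/p^{+}$ together with $(a_{4})$ and $(f_{3})$, is the verification of the Cerami condition $(C)_{c}$ for every $c>0$. Given a sequence $\{u_{n}\}\subset\mathscr{W}$ with $\Psi_{\lambda}(u_{n})\to c$ and $(1+\|u_{n}\|)\|\Psi_{\lambda}'(u_{n})\|\to 0$, I would argue by contradiction that $\|u_{n}\|\to+\infty$. Setting $w_{n}=u_{n}/\|u_{n}\|$, one extracts a weak limit $w$; on $\{w\ne 0\}$ superlinearity $(f_{1})$ forces the energy to blow down to $-\infty$, a contradiction. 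On $\{w=0\}$ one uses the trick of Jeanjean/Liu: for $t_{n}\in[0,1]$ chosen so that $\Psi_{\lambda}(t_{n}u_{n})=\max_{t\in[0,1]}\Psi_{\lambda}(tu_{n})$, combine $(a_{4})$ (which gives $\mathcal{H}(t_{n}u_{n})\leq \mathcal{H}(u_{n})$ pointwise in the double integral) with $(f_{3})$ (a quasi-monotonicity of $\mathcal{G}$) to show $\Psi_{\lambda}(t_{n}u_{n})$ is bounded above by a quantity controlled by $c$ plus $o(1)$, again contradicting the arbitrary largeness coming from $(f_{1})$ and $(f_{0})$. Boundedness of $\{u_{n}\}$ together with the $(S_{+})$ property of $\Phi'$ (the leading operator), invoked as stated in the excerpt, then upgrades weak convergence to strong convergence in $\mathscr{W}$.

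For the Fountain geometry, define
\begin{equation*}
\beta_{k}:=\sup_{u\in Z_{k},\ \|u\|=1}\|u\|_{L^{\vartheta(\cdot)}(\Omega)}.
\end{equation*}
By the compact embedding $\mathscr{W}\hookrightarrow L^{\vartheta(\cdot)}(\Omega)$ (subsection \ref{espacop1}) and a standard argument on biorthogonal systems, $\beta_{k}\to 0$. Using $(a_{2})$ and $(\mathcal{K})$ one gets the lower bound $\Phi(u)\geq \frac{c_{\mathcal{A}}b_{0}}{p^{+}}\rho_{\mathscr{W}}(u)$ in terms of the modular, while $(f_{0})$ yields $|F(x,t)|\leq C(|t|+|t|^{\vartheta(x)})$. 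For $u\in Z_{k}$ with $\|u\|=r_{k}$ chosen of the form $r_{k}=(C\lambda\beta_{k}^{\underline{\vartheta}^{+}})^{1/(p^{+}-\underline{\vartheta}^{+})}$ (up to constants, and after separating the cases $\|u\|>1$ vs. $\|u\|<1$ in the modular inequality), one estimates
\begin{equation*}
\Psi_{\lambda}(u)\geq \Bigl(\tfrac{c_{\mathcal{A}}b_{0}}{p^{+}}-\lambda\Bigr)\|u\|^{p^{+}}-C\lambda\,\beta_{k}^{\underline{\vartheta}^{+}}\|u\|^{\underline{\vartheta}^{+}}+o(1),
\end{equation*}
which tends to $+\infty$ along $\|u\|=r_{k}$ as $k\to\infty$ precisely because $\lambda<c_{\mathcal{A}}b_{0}/p^{+}$ keeps the leading coefficient positive and $\underline{\vartheta}^{+}>p^{+}$. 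This gives condition $(A_{2})$. For condition $(A_{1})$ on the finite-dimensional space $Y_{k}$, I would invoke equivalence of norms on $Y_{k}$ and the $p^{+}$-superlinearity $(f_{1})$ (together with Fatou's lemma) to show that $\Psi_{\lambda}(u)\to -\infty$ as $\|u\|\to\infty$ in $Y_{k}$, so a sufficiently large radius $\rho_{k}>r_{k}$ gives $\max_{u\in Y_{k},\|u\|=\rho_{k}}\Psi_{\lambda}(u)\leq 0$.

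With evenness, $(C)_{c}$, and the geometric conditions $(A_{1})$--$(A_{2})$ in hand, the Fountain Theorem (with Cerami) yields a sequence of critical points $u_{k}\in\mathscr{W}$ with $\Psi_{\lambda}(u_{k})\to+\infty$, completing the proof. The heart of the argument, and the step I expect to be the main obstacle, is the boundedness part of $(C)_{c}$: isolating the roles of $(a_{4})$ and $(f_{3})$ so that the nonhomogeneity of the leading operator $\mathscr{L}_{\mathcal{A}K}$ (governed by the variable exponent $p(x,y)$) can still be tamed in the absence of the Ambrosetti--Rabinowitz condition.
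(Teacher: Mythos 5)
Your proposal follows essentially the same route as the paper: evenness from $(a_{1})$ and $(f_{4})$, the Jeanjean-type argument with $(a_{4})$ and $(f_{3})$ plus the $(S_{+})$ property of $\Phi'$ for the Cerami condition (the paper's Lemma \ref{cerami}), the compactness estimate $\beta_{k}\to 0$ (Lemma \ref{beta}), and the Fountain geometry with $r_{k}$ scaled by $\beta_{k}$ (Lemma \ref{f1}), all fed into Theorem \ref{foun}. Two small corrections: in the $(h_2)$ geometry the radii $r_k$ tend to infinity, so the relevant side of the modular inequality for $\|u\|_{\mathscr{W}}>1$ is $\rho_{\mathscr{W}}(u)\geqslant\|u\|_{\mathscr{W}}^{p^{-}}$, not $\|u\|_{\mathscr{W}}^{p^{+}}$ as in your display (the paper accordingly takes $r_{j}=(c_{1}\beta_{j}^{\underline{\vartheta}^{+}})^{1/(p^{-}-\underline{\vartheta}^{+})}$); and the hypothesis $\lambda<\frac{c_{\mathcal{A}}b_{0}}{p^{+}}$ is not needed for the Cerami step (Lemma \ref{cerami} holds for all $\lambda>0$) but is used only in the geometric condition $(h_2)$ to keep the leading coefficient positive, which you do apply correctly in the display.
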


\begin{theorem}\label{t5}
  Assume $(a_{1})$-$(a_{4}),$ $(\mathcal{K}),$ and  $f$ satisfies $f$ satisfies $(f_{0})$, $(f_{1})$, $(f_{3})$ and $(f_{4}).$ Then, for each $\lambda\in\left(0,\frac{c_{\mathcal{A}}b_{0}}{p^{+}}\right),$ the problem \eqref{e1.1} has a sequence of weak solutions $v_{k} \in \mathscr{W}$, $k\in \mathbb{N}$  such that 
 $\Psi_{\lambda}(v_{k})<0,$  $\Psi_{\lambda}(v_{k})\to 0$ as
 $k\to +\infty.$
\end{theorem}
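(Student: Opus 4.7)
The plan is to apply the Dual Fountain Theorem with Cerami condition (Bartsch--Willem) to the even functional $\Psi_\lambda$ on the separable reflexive Banach space $\mathscr{W}$ defined in Subsection \ref{espacop1}. Fix a Schauder basis $\{e_j\}_{j\in\mathbb{N}}$ and set $Y_k := \bigoplus_{j=1}^{k}\mathbb{R}e_j$ and $Z_k := \overline{\bigoplus_{j\geq k}\mathbb{R}e_j}$. Symmetry of $\Psi_\lambda$ follows from $(f_4)$. It then remains to verify: (B1) $\inf\{\Psi_\lambda(u):u\in Z_k,\|u\|=\rho_k\}\geq 0$; (B2) $\max\{\Psi_\lambda(u):u\in Y_k,\|u\|=r_k\}<0$ for some $0<r_k<\rho_k$; (B3) $d_k:=\inf\{\Psi_\lambda(u):u\in Z_k,\|u\|\leq\rho_k\}\to 0$; and (B4) $\Psi_\lambda$ satisfies the $(C)_c^{*}$-condition for every $c\in[d_{k_0},0)$.

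For (B1) and (B3), introduce $\beta_k := \sup_{u\in Z_k,\|u\|=1}\|u\|_{L^{\vartheta(\cdot)}(\Omega)}$; the compact embedding $\mathscr{W}\hookrightarrow\hookrightarrow L^{\vartheta(\cdot)}(\Omega)$ established in Subsection \ref{espacop1}, combined with the nesting $Z_{k+1}\subset Z_k$, forces $\beta_k\to 0$. Using $(a_2)$, $(\mathcal{K})$, the modular--norm relations on $\mathscr{W}$, and the subcritical growth $(f_0)$, one obtains on $Z_k$ an inequality of the form
\[
\Psi_\lambda(u)\;\geq\;\tfrac{c_{\mathcal{A}}b_{0}}{p^{+}}\min\{\|u\|^{p^{-}},\|u\|^{p^{+}}\}-\lambda C\bigl(\beta_k^{\underline{\vartheta}^{-}}\|u\|^{\underline{\vartheta}^{-}}+\beta_k^{\underline{\vartheta}^{+}}\|u\|^{\underline{\vartheta}^{+}}\bigr).
\]
Since $\underline{\vartheta}^{-}>p^{+}$, choosing $\rho_k$ at the natural balance point of the two sides yields $\inf\Psi_\lambda|_{Z_k\cap\{\|u\|=\rho_k\}}\geq 0$; substituting back into the same inequality with $\|u\|\leq\rho_k$ produces $d_k\to 0$. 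For (B2), all norms are equivalent on the finite-dimensional space $Y_k$, and $(f_1)$ forces $F(x,t)\geq C_0|t|^{p^{+}}-C_1$, so on $Y_k$ the term $-\lambda\int_\Omega F(x,u)\,dx$ of order $\|u\|^{p^{+}}$ dominates the $\mathscr{A}$-term on a sufficiently small sphere of radius $r_k<\rho_k$, giving the required strictly negative maximum.

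The main obstacle is (B4). Given a sequence $(u_{n_j})$ with $u_{n_j}\in Y_{n_j}$, $n_j\to\infty$, $\Psi_\lambda(u_{n_j})\to c<0$, and $(1+\|u_{n_j}\|)\|(\Psi_\lambda|_{Y_{n_j}})'(u_{n_j})\|_{Y_{n_j}^{*}}\to 0$, I first prove boundedness of $(u_{n_j})$ using $(a_3)$, $(a_4)$, $(f_1)$, and $(f_3)$ exactly as in the Cerami verification performed for Theorem \ref{cc}; the explicit restriction $\lambda<c_{\mathcal{A}}b_0/p^{+}$ enters precisely to close the resulting chain of inequalities between the modular and the $F$-term. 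Reflexivity of $\mathscr{W}$ then produces a subsequence with $u_{n_j}\rightharpoonup u$ for some $u\in\mathscr{W}$. A density argument — approximating any $v\in\mathscr{W}$ by $v_{n_j}\in Y_{n_j}$ with $v_{n_j}\to v$ strongly, and testing the restricted derivative against $v_{n_j}-u$ — combined with $(f_0)$ and the compact embedding, forces $\Phi'(u_{n_j})(u_{n_j}-u)\to 0$. The $(S_{+})$ property of $\Phi'$ from Lemma \ref{ll1} then upgrades the weak convergence to strong, and the limit $u$ is a critical point of $\Psi_\lambda$ on all of $\mathscr{W}$. The Dual Fountain Theorem therefore delivers the desired sequence $v_k$ of weak solutions of \eqref{e1.1} with $\Psi_\lambda(v_k)<0$ and $\Psi_\lambda(v_k)\to 0$.
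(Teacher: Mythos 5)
Your overall route---Dual Fountain Theorem with the Cerami-type $(C)_c^{*}$ condition, the quantities $\beta_k$ measuring the thinning of $Z_k$, and the $(S_+)$-property of $\Phi'$ to upgrade weak to strong convergence---coincides with the paper's. Steps (B1), (B2) and (B4) track the paper's verification of $(g_1)$, $(g_2)$, $(g_4)$ and Lemma~\ref{Cc'} in essentially the same way.

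The shortcut you propose for (B3), however, does not close. You claim that ``substituting back into the same inequality with $\|u\|\leq\rho_k$ produces $d_k\to 0$.'' With $\rho_k$ at the balance point of (B1), one has $\rho_k\sim\beta_k^{\underline{\vartheta}^{+}/(p^{-}-\underline{\vartheta}^{+})}\to+\infty$ because $p^{-}<\underline{\vartheta}^{+}$. Plugging $\|u\|\leq\rho_k$ into your (B1) inequality, the only lower bound it yields is
\begin{equation*}
\Psi_\lambda(u)\;\geq\;-\lambda C\,\beta_k^{\underline{\vartheta}^{+}}\rho_k^{\underline{\vartheta}^{+}}\;=\;-\lambda C'\,\beta_k^{\underline{\vartheta}^{+}p^{-}/(p^{-}-\underline{\vartheta}^{+})},
\end{equation*}
and the exponent $\underline{\vartheta}^{+}p^{-}/(p^{-}-\underline{\vartheta}^{+})$ is negative, so this right-hand side tends to $-\infty$, not to $0$. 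So your substitution does not establish $d_k\to 0$; it gives a vacuous bound. The paper does something genuinely different at this point: it introduces the weakly--strongly continuous auxiliary functionals $\Sigma_1,\Sigma_2$ in \eqref{fi}, forms the suprema $\eta_j,\xi_j$ over $\{v\in Z_j:\|v\|_{\mathscr{W}}\le 1\}$, invokes Lemma~\ref{df} to get $\eta_j,\xi_j\to 0$, and bounds $\Psi_\lambda(tv)\geq-\rho_j^{\underline{\vartheta}^{+}}\eta_j-\rho_j\xi_j$. That is a separate decay mechanism not contained in (B1), and you need it (or some replacement) to obtain $(g_3)$.

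As a secondary point, the phrase in (B2) that the $F$-term ``dominates the $\mathscr{A}$-term on a sufficiently small sphere of radius $r_k<\rho_k$'' is misleading: the lower bound $F(x,t)\geq C_0|t|^{p^+}-C_1$ from $(f_1)$ is useful only for $\|u\|$ \emph{large}, and the paper accordingly chooses $r_j=\overline{t}>1$ on a fixed sphere past a threshold, with $r_j<\rho_j$ holding for $j$ large simply because $\rho_j\to\infty$. Your claim is not wrong in substance but should not be phrased in terms of a small sphere.
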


 \begin{theorem}\label{t6}
Assume $(a_{1})$-$(a_{4})$ and $(\mathcal{K}).$ If $f$ satisfies  $(f_{0})$, $(f_{1})$,  and $(f_{3})$, moreover $f(x,0)=0,\,\,f(x,t)\geqslant 0
\mbox{ a.e. }x\in \Omega$ and for all $t\geqslant 0.$ Then there exists a positive constant $\overline{\lambda}$ such that problem \eqref{e1.1} possesses at least one solution for all $\lambda\in (0,\overline{\lambda})$. Moreover 
$$\lim_{\lambda\to 0^{+}}\|u_{\lambda}\|_{\mathscr{W}}=+\infty.$$
($\|\cdot\|_{\mathscr{W}}$ is defined in Subsection\eqref{espacop1})
\end{theorem}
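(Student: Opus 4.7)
The plan is to apply the Mountain Pass Theorem with Cerami condition to the functional $\Psi_\lambda$ on $\mathscr{W}$ for $\lambda\in(0,\overline{\lambda})$, and then to exploit the $(S_+)$ property of $\Phi'$ to show that the critical points so produced necessarily blow up in $\mathscr{W}$ as $\lambda\to 0^+$. Since $f(x,0)=0$ and $f(x,t)\geq 0$ for $t\geq 0$, it is natural to truncate by setting $\tilde f(x,t):=f(x,t)$ for $t\geq 0$ and $\tilde f(x,t):=0$ for $t<0$, with primitive $\tilde F$. Any critical point of $\tilde\Psi_\lambda(u):=\Phi(u)-\lambda\int_\Omega \tilde F(x,u)\,dx$ is nonnegative (test with $u^-\in\mathscr{W}$), so it solves the original problem~\eqref{e1.1}. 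The hypotheses $(f_0),(f_1),(f_3)$ used for the Cerami condition in Theorem~\ref{cc} pass to $\tilde f$ (only the $+\infty$ side of $(f_1)$ is required for the Mountain Pass argument), so the Cerami condition $(C)_c$ holds for $\tilde\Psi_\lambda$ at every level.

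The mountain pass geometry is where the smallness of $\lambda$ is essential, since $(f_2)$ is not assumed. From $(f_0)$ only the estimate $|\tilde F(x,t)|\leq c_1|t|+\tfrac{c_1}{\underline{\vartheta}^-}|t|^{\vartheta(x)}$ is available, so combining the modular lower bound $\Phi(u)\geq\tfrac{c_{\mathcal{A}}b_0}{p^+}[u]_{s}^{p(\cdot,\cdot)}$ deduced from $(a_2),(a_3),(\mathcal{K})$ with the modular--norm inequality in $\mathscr{W}$ and the embeddings $\mathscr{W}\hookrightarrow L^1(\Omega)$ and $\mathscr{W}\hookrightarrow L^{\vartheta(\cdot)}(\Omega)$ yields, for $\|u\|_{\mathscr{W}}=\rho<1$,
$$\tilde\Psi_\lambda(u)\geq C_1\rho^{p^+}-\lambda C_2\rho-\lambda C_3\rho^{\underline{\vartheta}^-}.$$
Using $\underline{\vartheta}^->p^+$, first fix $\rho\in(0,1)$ so that $C_3\rho^{\underline{\vartheta}^--p^+}\leq C_1/4$; then define $\overline{\lambda}:=\min\{1,\,\tfrac{C_1}{2C_2}\rho^{p^+-1}\}$, so that for every $\lambda\in(0,\overline{\lambda})$ one has $\tilde\Psi_\lambda(u)\geq \alpha:=\tfrac{C_1}{4}\rho^{p^+}>0$ on $\|u\|_{\mathscr{W}}=\rho$, with $\alpha$ independent of $\lambda$. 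For the far point, pick $\varphi\in C_c^\infty(\Omega)$ with $\varphi\geq 0$, $\varphi\not\equiv 0$; the upper bound $\Phi(t\varphi)\leq C_4 t^{p^+}$ (valid for $t\geq 1$ from $(a_2),(a_3),(\mathcal{K})$) together with $(f_1)$ and Fatou's lemma gives $t^{-p^+}\int_\Omega \tilde F(x,t\varphi)\,dx\to+\infty$, hence $\tilde\Psi_\lambda(t_\lambda\varphi)<0$ and $\|t_\lambda\varphi\|_{\mathscr{W}}>\rho$ for some $t_\lambda>0$. The Mountain Pass Theorem produces a critical point $u_\lambda$ of $\tilde\Psi_\lambda$ with $\tilde\Psi_\lambda(u_\lambda)\geq\alpha$, which is the desired nontrivial solution of \eqref{e1.1}.

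For the blow-up assertion, assume by contradiction that $\|u_{\lambda_n}\|_{\mathscr{W}}\leq M$ along some sequence $\lambda_n\to 0^+$. Reflexivity and the compact embedding into $L^{\vartheta(\cdot)}(\Omega)$ give, along a subsequence, $u_{\lambda_n}\rightharpoonup u^*$ in $\mathscr{W}$ and $u_{\lambda_n}\to u^*$ strongly in $L^{\vartheta(\cdot)}(\Omega)$. Testing the equation $\tilde\Psi_{\lambda_n}'(u_{\lambda_n})=0$ with $u_{\lambda_n}-u^*$ gives
$$\langle\Phi'(u_{\lambda_n})-\Phi'(u^*),u_{\lambda_n}-u^*\rangle=\lambda_n\!\int_\Omega\!\tilde f(x,u_{\lambda_n})(u_{\lambda_n}-u^*)\,dx-\langle\Phi'(u^*),u_{\lambda_n}-u^*\rangle,$$
whose right-hand side vanishes as $n\to\infty$ by $(f_0)$, the strong $L^{\vartheta(\cdot)}$-convergence, and the weak convergence in $\mathscr{W}$. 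The $(S_+)$ property of $\Phi'$ forces $u_{\lambda_n}\to u^*$ strongly in $\mathscr{W}$; passing to the limit in the equation yields $\Phi'(u^*)=0$, whence $u^*\equiv 0$ by coercivity of $\langle\Phi'(\cdot),\cdot\rangle$ from $(a_2),(\mathcal{K})$. But then $\tilde\Psi_{\lambda_n}(u_{\lambda_n})=\Phi(u_{\lambda_n})-\lambda_n\int_\Omega \tilde F(x,u_{\lambda_n})\,dx\to 0$, contradicting $\tilde\Psi_{\lambda_n}(u_{\lambda_n})\geq\alpha>0$. The main obstacle is precisely this uniform-in-$\lambda$ positivity of $\alpha$: the linear piece of $\tilde F$ near zero (unavoidable without $(f_2)$) competes with the $\Phi$-term at the scale $\lambda\sim\rho^{p^+-1}$, which is why $\rho$ must be chosen first and $\overline{\lambda}$ only afterwards, rather than the other way around.
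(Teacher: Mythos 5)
Your route is genuinely different from the paper's and, modulo one gap, it is sound. The paper gets the blow‑up quantitatively: Lemma~\ref{1} places the mountain‑pass ring at the \emph{growing} radius $\rho_\lambda=\lambda^{-\gamma}$ with $\gamma\in(0,\tfrac{1}{\underline{\vartheta}^{+}-p^{-}})$, so that the pass level $\mathsf{C}_\lambda=\tfrac{c_{\mathcal{A}}b_0}{p^{+}}\lambda^{-\gamma p^{-}}-c_{13}\lambda^{1-\gamma\underline{\vartheta}^{+}}-\lambda c_9|\Omega|\to+\infty$ as $\lambda\to 0^{+}$; then the upper energy bound \eqref{lan}, which is polynomial in $\|u_\lambda\|_{\mathscr{W}}$ with $\lambda$‑uniform coefficients, forces $\|u_\lambda\|_{\mathscr{W}}\to+\infty$. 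You instead fix a small radius $\rho<1$ \emph{independently of $\lambda$}, shrink $\overline{\lambda}\lesssim\rho^{p^{+}-1}$ to beat the linear contribution of $F$, and obtain a $\lambda$‑independent floor $\alpha>0$; the blow‑up is then proved by contradiction through the $(S_{+})$ property of $\Phi'$: if $\|u_{\lambda_n}\|_{\mathscr{W}}$ stayed bounded along $\lambda_n\to 0^{+}$, compactness would force $u_{\lambda_n}\to u^{*}$ strongly, $\Phi'(u^{*})=0$ would give $u^{*}=0$, and the energies would tend to $0$, contradicting $\geqslant\alpha$. Your version trades the explicit divergence rate for a compactness argument; both are legitimate designs for the same end.

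The gap is the assertion that the Cerami condition holds for the truncated functional $\tilde\Psi_\lambda$, justified only by the remark that ``only the $+\infty$ side of $(f_1)$ is required.'' That is not true of the argument in Lemma~\ref{cerami}: when the normalized Cerami sequence has weak limit $\omega\not\equiv 0$, one needs $F(x,u_k)/|u_k|^{p^{+}}\to+\infty$ a.e.\ on $\Omega_\star$ for the Fatou contradiction, and for $\tilde F$ this fails on the set $\{\omega<0\}$, where $u_k(x)\to-\infty$ and $\tilde F(x,u_k)\equiv 0$. As written, the compactness step for $\tilde\Psi_\lambda$ is open. The simplest repair, and the paper's own choice, is to drop the truncation entirely: the theorem's hypotheses $(f_0)$, $(f_1)$, $(f_3)$ are on $f$ itself, so Lemma~\ref{cerami} applies directly to $\Psi_\lambda$, and every other step of your geometry and $(S_{+})$ argument carries over verbatim with $F$ in place of $\tilde F$ --- the sign condition and $f(x,0)=0$ play no role in the compactness. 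If you insist on truncating, you must first show that Cerami sequences of $\tilde\Psi_\lambda$ have vanishing negative parts (for instance by pairing $\tilde\Psi'_\lambda(u_k)$ with $u_k^{-}\in\mathscr{W}$ and invoking $(a_2)$, $(\mathcal{K})$), so that the weak limit $\omega$ is nonnegative and the one‑sided $(f_1)$ suffices.
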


\begin{theorem}\label{genus}
Assume $(a_{1})$-$(a_{3}),$ $(\mathcal{K})$, and $f$ satisfies  $(f_{4})$. In addition we will assume the following condition:
\begin{itemize}
\item[$(f_{5})$] $f:\Omega\times\mathbb{R}\to \mathbb{R}$ is a continuous function and there exist positive constants $C_{0},$ $C_{1}$ such that
$$C_{0}|t|^{\mathfrak{m}(x)-1}\leqslant f(x,t)\leqslant C_{1}|t|^{\mathfrak{m}(x)-1} \mbox{ for all } x\in \overline{\Omega} \mbox{  and }t\geqslant 0,$$
\end{itemize}
 where $\mathfrak{m}\in C(\overline{\Omega})$ such that $1<\mathfrak{m}(x)<p^{\star}(x)$ for all $x\in \overline{\Omega},$ with $\underline{\mathfrak{m}}^{+}<p^{-}.$ 
Then  problem \eqref{e1.1} it has  a sequence of on trivial solutions $u_{k} \in \mathscr{W}$, $k\in \mathbb{N}$  such that 
$$\lim_{k\to +\infty}\|u_{k}\|_{\mathscr{W}}=0$$
 for all $\lambda>0.$
\end{theorem}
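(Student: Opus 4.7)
The plan is to apply Kajikiya's version of the symmetric Mountain Pass Theorem to the Euler--Lagrange functional $\Psi_\lambda$ associated to \eqref{e1.1}, exploiting the sublinearity condition $\underline{\mathfrak{m}}^{+}<p^{-}$ to get negative critical values accumulating at zero from below, together with critical points whose fractional Sobolev norms converge to zero. The four ingredients that need to be checked are: (i) $\Psi_\lambda$ is even, $C^1$ with $\Psi_\lambda(0)=0$; (ii) $\Psi_\lambda$ is bounded from below and coercive; (iii) $\Psi_\lambda$ satisfies the Palais--Smale condition at all levels $c<0$; and (iv) for every integer $k$, there is a symmetric compact set $A_k\subset\mathscr{W}\setminus\{0\}$ with Krasnoselskii genus $\gamma(A_k)\geqslant k$ and $\sup_{u\in A_k}\Psi_\lambda(u)<0$.

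The evenness in (i) is immediate: $(a_1)$ makes $\mathscr{A}$ even, and $(f_4)$ makes $F(x,\cdot)$ even, so $\Psi_\lambda(-u)=\Psi_\lambda(u)$; the $C^1$ regularity and $\Psi_\lambda(0)=0$ come from what has already been established for the functional $\Phi$ and for the nonlinear term under $(f_5)$. For (ii) I would combine the modular lower bound coming from $(a_2)$ and $(\mathcal{K})$, namely $\Phi(u)\geqslant \frac{c_{\mathcal{A}}b_0}{p^{+}}\,\rho(u)$, with the upper bound $F(x,t)\leqslant \frac{C_1}{\underline{\mathfrak{m}}^{-}}|t|^{\mathfrak{m}(x)}$ obtained by integrating $(f_5)$. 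Using the standard norm--modular relations in $\mathscr{W}$ and the embedding $\mathscr{W}\hookrightarrow L^{\mathfrak{m}(\cdot)}(\Omega)$, one bounds $\Psi_\lambda(u)$ from below by an expression of the form $c\,\|u\|_{\mathscr{W}}^{p^{-}}-\lambda C\bigl(\|u\|_{\mathscr{W}}^{\underline{\mathfrak{m}}^{-}}+\|u\|_{\mathscr{W}}^{\underline{\mathfrak{m}}^{+}}\bigr)$ for large norms, and since $\underline{\mathfrak{m}}^{+}<p^{-}$ this gives coercivity and hence a global lower bound.

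For (iii) any Palais--Smale sequence at level $c$ is bounded by coercivity, so up to a subsequence it converges weakly in $\mathscr{W}$; strong convergence then follows from the $(S_+)$ property of $\Phi'$ (Lemma~\ref{ll1}) after observing that the Nemytskii operator associated with $f$ is compact from $\mathscr{W}$ into $\mathscr{W}^{*}$ thanks to the compact embedding $\mathscr{W}\hookrightarrow L^{\mathfrak{m}(\cdot)}(\Omega)$ guaranteed by $\underline{\mathfrak{m}}^{+}<p^{-}<p^{\star}_{s}(x)$.

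The delicate step, and the main obstacle, is (iv). For each $k$ I would fix a $k$-dimensional subspace $E_k\subset\mathscr{W}$ (for instance, spanned by $k$ smooth compactly supported functions in $\Omega$), on which all norms are equivalent. On a small sphere $\{u\in E_k:\|u\|_{\mathscr{W}}=r\}$, with $r<1$, the modular upper bound gives $\Phi(u)\leqslant C_{\mathcal{A}}b_1\,\rho(u)\leqslant \tilde C\,\|u\|_{\mathscr{W}}^{p^{-}}$, while the lower bound in $(f_5)$ yields
\begin{equation*}
\int_{\Omega}F(x,u)\,dx\;\geqslant\;\frac{C_0}{\underline{\mathfrak{m}}^{+}}\int_{\Omega}|u|^{\mathfrak{m}(x)}\,dx\;\geqslant\;\kappa_k\,\|u\|_{\mathscr{W}}^{\underline{\mathfrak{m}}^{+}}
\end{equation*}
for some constant $\kappa_k>0$ depending on the finite-dimensional subspace $E_k$. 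Since $\underline{\mathfrak{m}}^{+}<p^{-}$, for $r>0$ small enough one has $\Psi_\lambda(u)\leqslant \tilde C r^{p^{-}}-\lambda\kappa_k r^{\underline{\mathfrak{m}}^{+}}<0$ uniformly on the sphere; because this set is homeomorphic to $S^{k-1}$, its genus equals $k$, and taking $A_k$ equal to this sphere finishes the verification. The subtle point here is that the constant $\kappa_k$ genuinely depends on $k$, so the radius $r=r(k,\lambda)$ must be chosen after $E_k$ is fixed; that this is possible for \emph{every} $\lambda>0$ is what yields the parameter-free conclusion.

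With (i)--(iv) established, Kajikiya's theorem provides a sequence of critical points $u_k\in\mathscr{W}$ with $\Psi_\lambda(u_k)\to 0^{-}$; coercivity of $\Psi_\lambda$ prevents $\|u_k\|_{\mathscr{W}}\to\infty$, and the Palais--Smale argument at level $0$ combined with $\Psi_\lambda(0)=0$ and $\Psi_\lambda'(u_k)=0$ forces $\|u_k\|_{\mathscr{W}}\to 0$, giving the claim.
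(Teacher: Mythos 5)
Your proposal follows essentially the same route as the paper's own proof: both apply Kajikiya's symmetric Mountain Pass theorem to $\Psi_\lambda$, establish boundedness from below and the Palais--Smale condition via the coercivity forced by $\underline{\mathfrak{m}}^{+}<p^{-}$ together with the $(S_+)$ property of $\Phi'$ (Lemma~\ref{ll1}), and verify the genus condition $(I2)$ by working on small spheres in finite-dimensional subspaces of $C_0^{\infty}(\Omega)$, using norm equivalence in finite dimensions and the lower bound $F(x,t)\geqslant \frac{C_0}{\underline{\mathfrak{m}}^{+}}|t|^{\mathfrak{m}(x)}$ to force $\Psi_\lambda<0$ for sufficiently small radius. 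The only cosmetic difference is your closing sentence, which tries to re-derive $\|u_k\|_{\mathscr{W}}\to 0$ from a "PS argument at level $0$"; that reasoning is circular as stated, but unnecessary since Kajikiya's theorem already delivers the norm convergence directly, exactly as the paper invokes it.
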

We note that   our results show that the main results of  \cite{elard1,ji,kimar,li,miyagaki,rodrigues,zhou} remain valid for a wider class of nonlocal operators involving variable exponents.
Moreover, as far as we are aware, there are no results on this approach even involving  the fractional $p$-Laplacian problems as well as the fractional $p(\cdot)$-Laplacian problems although we consider some the well-known technique.

 This paper is organized as follows. In Section 2, we give some preliminary results about Lebesgue and fractional Sobolev spaces with variable exponents.  Besides,  we define the function space for the problem \eqref{e1.1}, we give an important result of compact and continuous embedding, and we proof fundamentals properties of Euler Lagrange functional associated with the  of problem \eqref{e1.1}. In Section 3, we prove the main theorems of this paper.

 \section{Preliminaries}\label{sectio2}
 \hfill \break 
In this section, we give some preliminary results which will be used in the sequel to discuss the problem  \eqref{e1.1}.
 Throughout the present paper $c_i$ and $C_i$ for $i=1,2,\ldots $  will denote generic positive constants which
may vary from line to line, but are independent of the terms which take part in any limit process.

\subsection{ Variable exponent Lebesgue space  }
 \hfill \break
In this subsection, we briefly review some of the basic properties of the  Lebesgue  spaces with variable exponent, for more can be found in \cite{alves,dien,edf,fann,fan,radu} and the references therein.

Let  $\Omega \subset \mathbb{R}^{N}$ $(N \geqslant 2)$ an  smooth bounded domain. We introduced

  $$C^{+}(\overline{\Omega})=\big\{h\in C(\overline{\Omega}): h(x)>1\mbox{ for all } x \in \overline{\Omega}\big\}$$
 and for $\sigma \in C^{+}(\overline{\Omega}) $, 
denote
$$ \underline{\sigma}^{-} := \inf_{x\in \overline{\Omega}}\,\sigma(x),  \underline{\sigma}^{+} :=\sup_{x\in \overline{\Omega}}\,\sigma(x).$$
For $\sigma \in C^{+}(\overline{\Omega}), $ we define the variable exponent Lebesgue space $L^{\sigma(\cdot)}(\Omega)$ as
 \begin{equation}\label{lp1}
L^{\sigma(\cdot)}(\Omega):=\Bigg\{u:\Omega \to \mathbb{R} \mbox{  measurable }: \exists\hspace{0.1cm} \zeta>0: \int_{\Omega}\Big|\frac{u(x)}{\zeta}\Big|^{\sigma(x)}\, dx <+\infty \Bigg\}.
\end{equation} 
The space $L^{\sigma(\cdot)}(\Omega)$  endowed with the Luxemburg norm,
$$\|u\|_{L^{\sigma(\cdot)}(\Omega)}:= \inf\Bigg\{\zeta>0:  \int_{\Omega}\Big|\frac{u(x)}{\zeta}\Big|^{\sigma(x)}\, dx\leqslant 1 \Bigg\}.$$
  is a separable and reflexive Banach space.    If $\Omega=\mathbb{R}^{N}$, we simply denote the norm of any $u \in L^{\sigma(\cdot)}(\mathbb{R}^{N}) $ by $\|u\|_{L^{\sigma(\cdot)}(\mathbb{R}^{N})}$. Note that, when  $\sigma$ is constant, the Luxemburg norm $\|\cdot\|_{L^{\sigma(\cdot)}(\Omega)}$ coincide with the standard norm $\|\cdot\|_{L^{\sigma}(\Omega)}$ of the Lebesgue space $L^{\sigma}(\Omega).$
 
 Denoting by $L^{\sigma'(\cdot)}(\Omega)$ the dual space of space $L^{\sigma(\cdot)}(\Omega)$, where $\frac{1}{\sigma(x)}+ \frac{1}{\sigma'(x)}=1$, for any  $u \in L^{\sigma(\cdot)}(\Omega)$ and $v \in L^{\sigma'(\cdot)}(\Omega)$  the following H\" older type inequality holds
 \begin{equation*} 
\int_{\Omega}|uv|\,dx\leqslant \bigg( \frac{1}{\underline{\sigma}^{-}}+\frac{1}{\underline{\sigma}'^{-}}\bigg)\|u\|_{ L^{\sigma(\cdot)}(\Omega)}\|v\|_{ L^{\sigma'(\cdot)}(\Omega)}.
\end{equation*}
If $\sigma_1, \sigma_{2} \in C^{+}(\overline{\Omega})$, $\sigma_{1}(x)\leqslant \sigma_{2}(x)$  for all $x \in\overline{\Omega}$, then $L^{\sigma_2(\cdot)}(\Omega)\hookrightarrow L^{\sigma_1(\cdot)}(\Omega)$, and the embedding is continuous.
 
\noindent An important role in manipulating the generalized Lebesgue spaces is played by the    $\sigma(\cdot)$-modular of the $ L^{\sigma(\cdot)}(\Omega)$ space, which is the convex
function $\rho_{\sigma(\cdot)}: L^{\sigma(\cdot)}(\Omega)\to \mathbb{R}$  defined by 
$$\rho_{\sigma(\cdot)}(u)=\int_{\Omega}|u|^{\sigma(x)}dx,$$
 along any function  $u \in L^{\sigma(\cdot)}(\Omega)$. 
 \begin{proposition}%\cite[Proposition 2.3]{fann}
\label{masmenos}
Let $u\in L^{\sigma(\cdot)}(\Omega) $ and $(u_{k})_{k\in \mathbb{N}}\subset L^{\sigma(\cdot)}(\Omega),$ then the following relations hold: 
\begin{itemize}
  \item[$(a)$]For $u \in L^{\sigma(\cdot)}(\Omega)\setminus \{0 \}$, $\zeta = \|u\|_{L^{\sigma(\cdot)}(\Omega) } $ if and only if $ \rho_{\sigma(\cdot)}\big(\frac{u}{\zeta}\big)=1 $;
    \item[$(b)$]$\|u\|_{L^{\sigma(\cdot)}(\Omega)}\geqslant 1\Rightarrow \|u\|_{L^{\sigma(\cdot)}(\Omega) }^{\underline{\sigma}^{-}}\leqslant \rho_{\sigma(\cdot)}(u)\leqslant \|u\|_{L^{\sigma(\cdot)}(\Omega) }^{\underline{\sigma}^{+}};$
    \item[$(c)$] $\|u\|_{L^{\sigma(\cdot)}(\Omega)}\leqslant 1\Rightarrow \|u\|_{L^{\sigma(\cdot)}(\Omega) }^{\underline{\sigma}^{+}}\leqslant \rho_{\sigma(\cdot)}(u)\leqslant \|u\|_{L^{\sigma(\cdot)}(\Omega) }^{\underline{\sigma}^{-}};$ 
  \end{itemize}  
  Moreover, $ \|u_{k}-u\|_{L^{\sigma(\cdot)}(\Omega)}\to 0 \Leftrightarrow  \rho_{\sigma(\cdot)}(u_k -u)\to 0 \Leftrightarrow  u_{k} \to u$ in measure in $\Omega$ and $\rho_{\sigma(\cdot)}(u_k)\to \rho_{\sigma(\cdot)}(u)$. In particular,  $\rho_{\sigma(\cdot)}$ is continuous in $L^{\sigma(\cdot)}(\Omega)$, and if furthermore $\sigma \in C^{+}(\overline{\Omega})$, then $\rho_{\sigma(\cdot)}$ is weakly lower semi-continuous.
\end{proposition}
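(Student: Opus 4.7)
My plan is to establish the three pointwise relations first, then use them to obtain the convergence equivalences, and finally deduce continuity and weak lower semicontinuity of the modular.

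For part $(a)$, the plan is to analyze the function $\varphi(\zeta):=\rho_{\sigma(\cdot)}(u/\zeta)$ on $(0,\infty)$. By the dominated convergence theorem, $\varphi$ is continuous; it is strictly decreasing because $\sigma(x)>1$ and $u\not\equiv 0$; and $\varphi(\zeta)\to 0$ as $\zeta\to\infty$ (dominated convergence again). From the definition of $\|u\|_{L^{\sigma(\cdot)}(\Omega)}$ as the infimum of the set $\{\zeta>0:\varphi(\zeta)\leqslant 1\}$, I would combine these properties with the lower semicontinuity of the integral (via Fatou) along a minimizing sequence $\zeta_n\downarrow\|u\|_{L^{\sigma(\cdot)}(\Omega)}$ to conclude $\varphi(\|u\|_{L^{\sigma(\cdot)}(\Omega)})\leqslant 1$, and use strict monotonicity to rule out strict inequality.

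For $(b)$ and $(c)$, I set $\zeta=\|u\|_{L^{\sigma(\cdot)}(\Omega)}$, so that by $(a)$ we have $\int_{\Omega}|u(x)/\zeta|^{\sigma(x)}\,dx=1$. If $\zeta\geqslant 1$, then for every $x$, $\zeta^{\underline{\sigma}^{-}}\leqslant \zeta^{\sigma(x)}\leqslant \zeta^{\underline{\sigma}^{+}}$, and multiplying the integrand $|u/\zeta|^{\sigma(x)}$ by $\zeta^{\sigma(x)}$ recovers $|u|^{\sigma(x)}$; integrating gives $\zeta^{\underline{\sigma}^{-}}\leqslant\rho_{\sigma(\cdot)}(u)\leqslant\zeta^{\underline{\sigma}^{+}}$. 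The case $\zeta\leqslant 1$ reverses the inequalities in the pointwise estimate and hence in the resulting bounds, yielding $(c)$.

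For the convergence chain, the implication $\|u_k-u\|_{L^{\sigma(\cdot)}(\Omega)}\to 0\Rightarrow\rho_{\sigma(\cdot)}(u_k-u)\to 0$ follows directly from $(c)$ applied to $u_k-u$ once $\|u_k-u\|_{L^{\sigma(\cdot)}(\Omega)}\leqslant 1$. For the reverse direction, given $\varepsilon\in(0,1)$, I estimate
\begin{equation*}
\rho_{\sigma(\cdot)}\bigl((u_k-u)/\varepsilon\bigr)=\int_{\Omega}\varepsilon^{-\sigma(x)}|u_k-u|^{\sigma(x)}\,dx\leqslant \varepsilon^{-\underline{\sigma}^{+}}\rho_{\sigma(\cdot)}(u_k-u)\longrightarrow 0,
\end{equation*}
so eventually this is $\leqslant 1$, which by the definition of the Luxemburg norm yields $\|u_k-u\|_{L^{\sigma(\cdot)}(\Omega)}\leqslant\varepsilon$. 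Convergence in modular implies convergence in measure by a Chebyshev-type estimate applied to the sets $\{|u_k-u|\geqslant \delta\}$, splitting according to whether $|u_k-u|\geqslant 1$ or not so as to bound $|u_k-u|^{\sigma(x)}$ from below by a power of $\delta$. Finally, $\rho_{\sigma(\cdot)}(u_k)\to\rho_{\sigma(\cdot)}(u)$ follows from the elementary inequality $||a|^{\sigma(x)}-|b|^{\sigma(x)}|\leqslant \underline{\sigma}^{+}\bigl(|a|^{\sigma(x)-1}+|b|^{\sigma(x)-1}\bigr)|a-b|$ combined with a Hölder estimate in $L^{\sigma(\cdot)}$ and $L^{\sigma'(\cdot)}$.

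Continuity of $\rho_{\sigma(\cdot)}$ in $L^{\sigma(\cdot)}(\Omega)$ is then immediate from the last equivalence. For weak lower semicontinuity under the assumption $\sigma\in C^{+}(\overline{\Omega})$, I note that $\rho_{\sigma(\cdot)}$ is convex (since $t\mapsto |t|^{\sigma(x)}$ is convex for each $x$ with $\sigma(x)>1$) and strongly continuous, hence, by Mazur's theorem, weakly lower semicontinuous. The main obstacle I anticipate is the step showing that modular convergence forces norm convergence, as this requires carefully exploiting the sublevel-set definition of the Luxemburg norm together with the scaling estimate above; the rest is essentially bookkeeping with the pointwise bounds $\underline{\sigma}^{-}\leqslant\sigma(x)\leqslant\underline{\sigma}^{+}$.
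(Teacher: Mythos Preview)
The paper does not prove this proposition. It is stated in the preliminary subsection on variable-exponent Lebesgue spaces as a known fact, under the blanket reference at the head of that subsection to \cite{alves,dien,edf,fann,fan,radu}; no argument appears either in the body of the paper or in the appendix. There is therefore no ``paper's own proof'' to compare against, and your outline already goes well beyond what the paper supplies.

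Your sketch of $(a)$--$(c)$ and of the equivalence $\|u_k-u\|_{L^{\sigma(\cdot)}(\Omega)}\to 0 \Leftrightarrow \rho_{\sigma(\cdot)}(u_k-u)\to 0$ is the standard one and is fine. There is, however, a genuine omission in the ``Moreover'' chain: you handle only one direction of the second biconditional. You argue that $\rho_{\sigma(\cdot)}(u_k-u)\to 0$ forces convergence in measure (Chebyshev) and $\rho_{\sigma(\cdot)}(u_k)\to\rho_{\sigma(\cdot)}(u)$ (your mean-value inequality plus H\"older). But you never address the converse: from $u_k\to u$ in measure together with $\rho_{\sigma(\cdot)}(u_k)\to\rho_{\sigma(\cdot)}(u)$, conclude $\rho_{\sigma(\cdot)}(u_k-u)\to 0$. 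This is the nontrivial direction and cannot be obtained from your pointwise inequality; one typically passes to an a.e.-convergent subsequence and invokes a Brezis--Lieb-type lemma for variable exponents (or a Vitali/equiintegrability argument, using that $\rho_{\sigma(\cdot)}(u_k)\to\rho_{\sigma(\cdot)}(u)$ rules out concentration). Without this step the stated three-way equivalence is incomplete.
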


 \subsection{The functional space  $\mathscr{W}$ and their properties}\label{espacop1}
 \hfill \break 
  Let   $\Omega\subset\mathbb{R}^{N},$ $N\geqslant2,$ is a smooth bounded domain and $s \in (0,1)$. We consider two variable exponents $q:\overline{\Omega}\to \mathbb{R}$ and  $p:\overline{\Omega}\times\overline{\Omega}\to \mathbb{R}$, where both $q(\cdot)$ and $p(\cdot, \cdot)$ are continuous function. We assume that:  
 \begin{equation}  \label{q10}\tag{$\mathit{q}_{1}$}
\begin{split} 
   p \mbox{ is symmetric, this is, } p(x,y)=p(y,x),&\\ 1<\underline{p}^{-}:=\inf_{(x,y)\in \overline{\Omega}\times\overline{\Omega} }\,p(x,y)\leqslant p(x,y)\leqslant\sup_{(x,y)\in \overline{\Omega}\times\overline{\Omega}}\,p(x,y):=\underline{p}^{+}<+\infty,&
   \end{split}
\end{equation}
 and
 \begin{equation}\label{qq1} \tag{$\mathit{q}_{2}$}
    1<\underline{q}^{-}:=\inf_{x\in \overline{\Omega} }\,q(x)\leqslant q(x)\leqslant\sup_{x\in \overline{\Omega}}\,q(x):=\underline{q}^{+}<+\infty.
\end{equation}
The fractional Sobolev space with variable
exponents is defined by
\begin{equation*}
\begin{split}
    W^{s,q(\cdot),p(\cdot, \cdot)}(\Omega) 
    &:=\Bigg\{u\in L^{q(\cdot)}(\Omega): \int_{\Omega \times \Omega}\frac{|u(x)-u(y)|^{p(x,y)}}{\zeta^{p(x,y)}|x-y|^{N+sp(x,y)}}\, dx\,dy< +\infty, \mbox{ for some } \zeta>0 \Bigg\} 
\end{split}
\end{equation*}
 and we set
$$[u]^{s,p(\cdot,\cdot)}_{ \Omega}= \inf\Bigg\{ \zeta>0 ; \int_{\Omega\times \Omega}\frac{|u(x)-u(y)|^{p(x,y)}}{\zeta^{p(x,y)}|x-y|^{N+sp(x,y)}} \,dx\,dy \leqslant 1 \Bigg\}$$
 the variable exponent Gagliardo seminorm.    It is already known  that
$W^{s,q(\cdot),p(\cdot, \cdot)}(\Omega)$ is a separable and reflexive Banach space with the norm
 $$ \|u\|_{W^{s,q(\cdot), p(\cdot,\cdot)}(\Omega)}:=\|u\|_{L^{q(\cdot)}(\Omega)}+[u]_{\Omega}^{s,p(\cdot,\cdot)},$$ see \cite{azr,bahrouni,kaufmann}.

% The next  result (see \cite{ky}) is an improvement of \cite[Theorem 1.1]{kaufmann} once in the existing papers, see for instance \cite{anaour,bahrouni,kaufmann}, concerning space $W^{s,q(\cdot),p(\cdot,\cdot)}(\Omega)$, where the function
%$q(\cdot)$ is assumed as $q(x) > p(x, x)$, for all $x \in \overline{\Omega}$. 
% \begin{theorem} \label{embee}
%Let $\Omega\subset\mathbb{R}^{N}$  a bounded Lipschitz domain, $s\in(0,1)$,     $p(x,y)$  and $q(x)$ be continuous variable exponents such that \eqref{q10} - \eqref{qq1} be satisfied. 
%Assume furthermore that  
%$$ s\underline{p}^{+} < N \mbox{ and } q(x)\geqslant p(x,x) \mbox{ for all } x\in \overline{\Omega}.$$
%Then  the space $W^{s,q(\cdot), p(\cdot,\cdot)}(\Omega)$ is continuously embedded in $L^{r(\cdot)}(\Omega)$ for all $r :\overline{\Omega}\rightarrow(1,+\infty)$  a continuous function such that
%\begin{equation*}
%p^{\star}_{s}(x):=\frac{Np(x,x)}{N-sp(x,x)}>r(x), \mbox{ for all } x \in \overline{\Omega}.
%\end{equation*}
%Moreover,  $W^{s,q(\cdot), p(\cdot, \cdot)}(\Omega)$  is compactly embedded in  $ L^{r(\cdot)}(\Omega)$.
% \end{theorem}
 \begin{remark}
 Throughout this paper, when $q(x)=p(x,x)$ we denote $p(x)$ instead of $p(x, x)$ and  we will write $W^{s, p(\cdot, \cdot)}(\Omega)$ instead of $W^{s,p(\cdot), p(\cdot, \cdot)}(\Omega).$
 \end{remark}
The next  result is an consequence of  \cite[Theorem 3.2]{ky}.

%improvement of \cite[Theorem 1.1]{kaufmann} once in the %existing papers, see for instance %\cite{anaour,bahrouni,kaufmann}, concerning space %$W^{s,q(\cdot),p(\cdot,\cdot)}(\Omega)$, where the function 
 
%Now since,   $\Omega$ is bounded, the embedding  %$W^{s,q(\cdot), p(\cdot, \cdot)}(\Omega) \hookrightarrow  %W^{s, p(\cdot, \cdot)}(\Omega)$  is continuous,  and %$p(x, x)\leqslant q(x)$ for all $ x \in \overline{\Omega}%$, the Theorem \ref{embee} presents the following  consequence (see \cite{ky}).
\begin{corollary}\label{3.4a}
Let $\Omega\subset\mathbb{R}^{N}$  a bounded Lipschitz domain, $s\in(0,1)$, $p(x,y)$ and  $p(x)$ be continuous variable exponents such that \eqref{q10}-\eqref{qq1} be satisfied and that $ s\underline{p}^{+} < N$. Then, for all $r :\overline{\Omega}\rightarrow(1,+\infty)$  a continuous function such that
$p^{\star}_{s}(x)>r(x)$  for all $ x \in \overline{\Omega}$, 
 the space $W^{s, p(\cdot,\cdot)}(\Omega)$ is continuously and compactly embedding  in $L^{r(\cdot)}(\Omega)$.
\end{corollary}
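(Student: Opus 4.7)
The plan is to obtain this corollary by a direct application of Theorem~3.2 of \cite{ky}, after checking that our hypotheses fit its framework and after handling the passage from the critical subcritical condition to a uniform gap on $\overline{\Omega}$. The cited theorem gives continuous and compact embeddings of the fractional variable-exponent Sobolev space into variable-exponent Lebesgue spaces provided the target exponent stays strictly below the critical Sobolev exponent $p^{\star}_{s}(\cdot)$; our corollary is the specialization of that statement to the case $q(x) = p(x,x)$ singled out in the remark that precedes it.

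First, I would verify the hypotheses one by one. The domain $\Omega$ is bounded and Lipschitz; the exponent $p$ satisfies the symmetry assumption and the uniform bounds $1 < \underline{p}^{-} \leqslant \underline{p}^{+} < +\infty$ required by \eqref{q10}; the integrability exponent $r$ is continuous on $\overline{\Omega}$ and strictly above $1$; and the condition $s\underline{p}^{+} < N$ guarantees that $p^{\star}_{s}(x) = \frac{Np(x)}{N-sp(x)}$ is well defined and finite throughout $\overline{\Omega}$. These are exactly the structural assumptions needed to invoke Theorem~3.2 of \cite{ky}.

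Second, to feed the subcritical condition into that theorem in the form it is typically stated, I would upgrade the pointwise inequality $r(x) < p^{\star}_{s}(x)$ to a uniform one. Since both $r$ and $p^{\star}_{s}$ are continuous on the compact set $\overline{\Omega}$, the strict inequality at every point together with compactness yields $\varepsilon > 0$ with $r(x) + \varepsilon \leqslant p^{\star}_{s}(x)$ for all $x \in \overline{\Omega}$. This uniform subcriticality is the precise input required to conclude that the embedding $W^{s,p(\cdot,\cdot)}(\Omega) \hookrightarrow L^{r(\cdot)}(\Omega)$ is both continuous and compact.

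The only really delicate point is the bookkeeping needed to reconcile the definitions: ensuring that the Gagliardo seminorm $[\,\cdot\,]^{s,p(\cdot,\cdot)}_{\Omega}$ and the space $W^{s,p(\cdot,\cdot)}(\Omega)$ used here coincide with those in \cite{ky}, in particular that the convention $q(x) = p(x,x)$ is compatible with the hypotheses \eqref{q10}--\eqref{qq1} of their theorem (with $q$ then inheriting its upper and lower bounds from $p$). Once that identification is in place, the corollary is immediate; no independent compactness argument via a Riesz--Fréchet--Kolmogorov criterion or extension operator is required.
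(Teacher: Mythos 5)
Your proposal matches the paper's treatment: the paper states this corollary without a standalone proof, presenting it explicitly as a consequence of Theorem~3.2 of~\cite{ky}, exactly as you do. Your additional remarks — verifying the structural hypotheses, upgrading the pointwise subcriticality $r(x)<p^{\star}_{s}(x)$ to a uniform gap via compactness of $\overline{\Omega}$, and reconciling the convention $q(x)=p(x,x)$ — are the right bookkeeping and are implicitly what the paper relies on.
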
 
Now, we consider the space
\begin{equation*}
\begin{split}
  W^{s, p(\cdot,\cdot)}(\mathbb{R}^{N}):= \Bigg\{u\in L^{p(\cdot)}(\mathbb{R}^{N}): \int_{\mathbb{R}^{N} \times \mathbb{R}^{N}}\frac{|u(x)-u(y)|^{p(x,y)}}{\zeta^{p(x,y)}|x-y|^{N+sp(x,y)}} \,dx\,dy < +\infty, \mbox{ for some } \zeta>0 \Bigg\} 
\end{split}
\end{equation*}
where the space ${L^{p(\cdot)}(\mathbb{R}^{N})}$ is defined analogous the space ${L^{p(\cdot)}(\Omega)}$. The corresponding norm for this space  is
$$ \| u \|:= \| u \|_{L^{p(\cdot)}(\mathbb{R}^{N})} + [u]^{s, p(\cdot, \cdot)}_{\mathbb{R}^{N}}.$$ The space $ (W^{s, p(\cdot,\cdot)}(\mathbb{R}^{N}), \|\cdot\|)$  has the same properties that  $( W^{s, p(\cdot,\cdot)}(\Omega), \|\cdot\|_{W^{s, p(\cdot,\cdot)}(\Omega)})$, this is, it is  a  reflexive and separable Banach space.
 
 \noindent We define the space were will study the problem \eqref{e1.1}. Let we will consider   the variable exponents  $p(x):=p(x,x)$ for all $x \in \mathbb{R}^{N}$ with   $p \in C(\mathbb{R}^{N}\times \mathbb{R}^{N})$ satisfying \eqref{a23}  and we denote by  
 \begin{equation*}\label{space}
  \mathscr{W} = W^{s,p(\cdot, \cdot)}_{0}:= \{u \in W^{s, p(\cdot, \cdot)}(\mathbb{R}^{N}): u=0 \mbox{ a.e. in }   \mathbb{R}^{N}\setminus \Omega \}.
 \end{equation*}
Note that $\mathscr{W} $ is  a   closed subspace of $W^{s, p(\cdot, \cdot)}(\mathbb{R}^{N})$,  thus  $ \mathscr{W}$ is a reflexive and  separable Banach space with the norm
$$ \| u \|:= \| u \|_{L^{p(\cdot)}(\Omega)} + [u]^{s, p(\cdot, \cdot)}_{\mathbb{R}^{N}},$$ once the norms $\|\cdot\|_{L^{p(\cdot)}(\mathbb{R}^{N})}$  and $\|\cdot\|_{L^{p(\cdot)}(\Omega)}$  coincide in $ \mathscr{W} $.

 The next Lemma shows that  the space $(\mathscr{W}, \|\cdot\|) $ is equivalently defined  with respect to the Gagliardo seminorm 
$[\cdot]^{s, p(\cdot, \cdot)}_{\mathbb{R}^{N}}.$ See proof in Appendix \ref{apendice}.

\begin{lemma}\label{lw1}
Assume $\Omega$ be a smooth bounded domain in $\mathbb{R}^{N}$. Let  $p(x):=p(x,x)$ for all $x \in \mathbb{R}^{N}$ with $p \in C(\mathbb{R}^{N}\times \mathbb{R}^{N})$ satisfying \eqref{a23}  and $p^{\star}_{s}(x) > p(x)$ for $x \in \mathbb{R}^{N}$.   Then there exists $ \zeta_1>0$ such that
\begin{equation*}\label{w1}
\|u\|_{L^{p(\cdot)}(\Omega)}\leqslant \frac{1}{\zeta_1}[u]^{s, p(\cdot, \cdot)}_{\mathbb{R}^{N}}\mbox{ for all } \hspace{0.1cm} u \in \mathscr{W}.
\end{equation*}
\end{lemma}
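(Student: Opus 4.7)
The plan is to argue by contradiction, using the compact embedding given by Corollary \ref{3.4a}. If no universal $\zeta_1$ worked, then a normalized sequence with unit $L^{p(\cdot)}(\Omega)$-norm would have its Gagliardo seminorm shrinking to zero; by compactness one would extract a nonzero $L^{p(\cdot)}$-limit $v$, but the vanishing seminorm would force $v$ to be essentially constant on $\mathbb{R}^N$, and since $v=0$ on $\mathbb{R}^N\setminus\Omega$, this would push $v\equiv 0$, contradicting the normalization.

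Concretely, assume the conclusion fails and pick $u_n\in\mathscr{W}$ with $\|u_n\|_{L^{p(\cdot)}(\Omega)}>n\,[u_n]^{s,p(\cdot,\cdot)}_{\mathbb{R}^N}$. Set $v_n:=u_n/\|u_n\|_{L^{p(\cdot)}(\Omega)}$, so that $\|v_n\|_{L^{p(\cdot)}(\Omega)}=1$ and $[v_n]^{s,p(\cdot,\cdot)}_{\mathbb{R}^N}<1/n\to 0$. Since $[v_n]^{s,p(\cdot,\cdot)}_{\Omega}\leqslant[v_n]^{s,p(\cdot,\cdot)}_{\mathbb{R}^N}$, the sequence is bounded in $W^{s,p(\cdot,\cdot)}(\Omega)$; Corollary \ref{3.4a} applied with $r(x)=p(x)$ (admissible since $p(x)<p^{\star}_{s}(x)$ by hypothesis) yields, along a subsequence, $v_n\to v$ strongly in $L^{p(\cdot)}(\Omega)$ and a.e.\ in $\Omega$, hence $\|v\|_{L^{p(\cdot)}(\Omega)}=1$. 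Because $v_n\equiv 0$ on $\mathbb{R}^N\setminus\Omega$, extending $v$ by zero yields $v_n\to v$ a.e.\ on $\mathbb{R}^N$. For $n$ large enough that $[v_n]^{s,p(\cdot,\cdot)}_{\mathbb{R}^N}\leqslant 1$, the Luxemburg-seminorm/modular relation analogous to Proposition \ref{masmenos}$(c)$ yields
\[
\int_{\mathbb{R}^{N}}\!\!\int_{\mathbb{R}^{N}}\frac{|v_n(x)-v_n(y)|^{p(x,y)}}{|x-y|^{N+sp(x,y)}}\,dx\,dy\leqslant\bigl([v_n]^{s,p(\cdot,\cdot)}_{\mathbb{R}^{N}}\bigr)^{p^{-}}\longrightarrow 0.
\]
Fatou's lemma applied to this non-negative double integrand then forces $\iint_{\mathbb{R}^{N}\times\mathbb{R}^{N}}|v(x)-v(y)|^{p(x,y)}|x-y|^{-N-sp(x,y)}\,dx\,dy=0$; hence $v(x)=v(y)$ for a.e.\ $(x,y)$, so $v$ is a.e.\ constant on $\mathbb{R}^N$, and the vanishing of $v$ on the positive-measure set $\mathbb{R}^{N}\setminus\Omega$ forces $v\equiv 0$, contradicting $\|v\|_{L^{p(\cdot)}(\Omega)}=1$.

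The main obstacle I foresee is the modular/seminorm comparison used just above: Proposition \ref{masmenos} is stated only for the spaces $L^{\sigma(\cdot)}(\Omega)$, so one needs the analog for the Gagliardo modular $\mathcal{J}(u)=\iint|u(x)-u(y)|^{p(x,y)}|x-y|^{-N-sp(x,y)}\,dx\,dy$ paired with its Luxemburg seminorm $[\cdot]^{s,p(\cdot,\cdot)}_{\mathbb{R}^{N}}$. This is standard Luxemburg-norm theory, ultimately resting on $1<p^{-}\leqslant p^{+}<+\infty$ from \eqref{a23}, but must be carried out explicitly. Once that modular control is in place, the Fatou step and the identification of $v$ as a constant are routine, and the argument closes immediately.
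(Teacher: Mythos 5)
Your proof is correct and follows essentially the same route as the paper: both arguments normalize a sequence with unit $L^{p(\cdot)}(\Omega)$-norm and Gagliardo seminorm tending to the (candidate) infimum, invoke the compact embedding from Corollary \ref{3.4a} to pass to a strong $L^{p(\cdot)}(\Omega)$ limit, extend by zero and use Fatou on the modular, and then conclude by the fact that a function in $\mathscr{W}$ with vanishing seminorm must be constant, hence identically zero. The only cosmetic difference is that the paper phrases the same computation as "the infimum over $\mathbb{M}=\{u\in\mathscr{W}:\|u\|_{L^{p(\cdot)}(\Omega)}=1\}$ is attained, hence positive," while you set it up as a direct contradiction; you also make explicit the final step (zero seminorm forces $v$ constant, hence zero) and the modular-to-seminorm comparison, both of which the paper leaves implicit.
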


\noindent Therefore, we  will consider the space  $\mathscr{W}$ with norm $\|u\|_{\mathscr{W}}= [u]^{s, p(\cdot, \cdot)}_{\mathbb{R}^{N}}$ and we proof  an important result of compact and continuous embedding of space $\mathscr{W}$ as consequence of Corollary \ref{3.4a} and  Lemma \ref{lw1}.  See proof of Lemma \ref{2.11} in  Appendix \ref{apendice}.

\begin{lemma}\label{2.11}
Assume $\Omega$ be a smooth bounded domain in $\mathbb{R}^{N}$. Let $p(x):=p(x,x)$ for all $x \in \mathbb{R}^{N}$ with  $p \in C(\mathbb{R}^{N}\times \mathbb{R}^{N})$ satisfying \eqref{a23}  and $p^{\star}_{s}(x) > p(x)$ for $x \in \mathbb{R}^{N}$. 
Assume that $r:\overline{\Omega}\rightarrow (1, +\infty)$ is a continuous function. Thus, the space $(\mathscr{W}, \|\cdot\|_{\mathscr{W}})$  is continuously and compactly embedding   in $L^{r(\cdot)}(\Omega)$ for all $r(x) \in (1, p^{\star}_{s}(x))$ for all $x \in \overline{\Omega}$.
\end{lemma}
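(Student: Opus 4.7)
The plan is to deduce the embedding $\mathscr{W}\hookrightarrow L^{r(\cdot)}(\Omega)$ by factoring through the already-established embedding $W^{s,p(\cdot,\cdot)}(\Omega)\hookrightarrow L^{r(\cdot)}(\Omega)$ from Corollary \ref{3.4a}. Concretely, I will define the restriction operator $\mathcal{R}:\mathscr{W}\to W^{s,p(\cdot,\cdot)}(\Omega)$ by $\mathcal{R}u = u|_{\Omega}$, show it is continuous, and then compose it with the embedding supplied by Corollary \ref{3.4a}.

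First I would verify that $\mathcal{R}$ is well defined and bounded. Given $u\in\mathscr{W}$, the key observation is that $u=0$ a.e.\ on $\mathbb{R}^N\setminus\Omega$, so $\|u\|_{L^{p(\cdot)}(\mathbb{R}^N)} = \|u\|_{L^{p(\cdot)}(\Omega)}$. Also, monotonicity of the double integral gives
\begin{equation*}
[u]_{\Omega}^{s,p(\cdot,\cdot)} \leqslant [u]_{\mathbb{R}^N}^{s,p(\cdot,\cdot)} = \|u\|_{\mathscr{W}},
\end{equation*}
since $\Omega\times\Omega\subset\mathbb{R}^N\times\mathbb{R}^N$ and the integrand is nonnegative (this comparison of Luxemburg–type quantities is routine: any admissible $\zeta$ in the definition on $\mathbb{R}^N$ is admissible on $\Omega$). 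Applying Lemma \ref{lw1} then yields
\begin{equation*}
\|\mathcal{R}u\|_{W^{s,p(\cdot,\cdot)}(\Omega)} = \|u\|_{L^{p(\cdot)}(\Omega)} + [u]_{\Omega}^{s,p(\cdot,\cdot)} \leqslant \Bigl(1+\tfrac{1}{\zeta_1}\Bigr)\|u\|_{\mathscr{W}},
\end{equation*}
proving continuity of $\mathcal{R}$.

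For the continuous embedding $\mathscr{W}\hookrightarrow L^{r(\cdot)}(\Omega)$, I compose $\mathcal{R}$ with the continuous embedding from Corollary \ref{3.4a} (whose hypotheses are met because $p$ restricted to $\overline{\Omega}\times\overline{\Omega}$ satisfies \eqref{q10}, $q(x)=p(x)$ satisfies \eqref{qq1}, $s\underline{p}^+<N$ follows from \eqref{a23}, and the pointwise bound $r(x)<p_s^\star(x)$ holds on $\overline{\Omega}$). For compactness I take a bounded sequence $(u_n)\subset\mathscr{W}$. By the bound above $(\mathcal{R}u_n)$ is bounded in $W^{s,p(\cdot,\cdot)}(\Omega)$, and Corollary \ref{3.4a} extracts a subsequence converging in $L^{r(\cdot)}(\Omega)$; since $u_n$ and $\mathcal{R}u_n$ agree a.e.\ on $\Omega$, this subsequence of $u_n$ converges in $L^{r(\cdot)}(\Omega)$.

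The only delicate point I foresee is verifying the inequality $[u]_{\Omega}^{s,p(\cdot,\cdot)}\leqslant [u]_{\mathbb{R}^N}^{s,p(\cdot,\cdot)}$ at the level of the Luxemburg seminorms (rather than the modulars), but this follows directly from the definition: if $\zeta>0$ satisfies the modular inequality on $\mathbb{R}^N\times\mathbb{R}^N$, restricting integration to $\Omega\times\Omega$ only decreases the integral, so the same $\zeta$ is admissible on $\Omega\times\Omega$, whence the infimum is no larger. Everything else is a direct transfer of the result already stated in Corollary \ref{3.4a} via the Poincaré–type inequality of Lemma \ref{lw1}.
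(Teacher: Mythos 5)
Your argument is exactly the paper's: both proofs bound $\|u\|_{W^{s,p(\cdot,\cdot)}(\Omega)}$ by $(1+\tfrac{1}{\zeta_1})\|u\|_{\mathscr{W}}$ using Lemma \ref{lw1} together with the monotonicity $[u]_{\Omega}^{s,p(\cdot,\cdot)}\leqslant[u]_{\mathbb{R}^N}^{s,p(\cdot,\cdot)}$, and then transfer the continuous and compact embedding from Corollary \ref{3.4a}. Your write-up is a bit more explicit about why the Luxemburg seminorms compare and about verifying the hypotheses of Corollary \ref{3.4a}, but the route is the same.
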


   An important role in manipulating the fractional Sobolev spaces with variable exponent is played by the $(s, p(\cdot, \cdot))$-convex modular function $\rho_{\mathscr{W}}: \mathscr{W} \to \mathbb{R}$ defined by
  $$\displaystyle{\rho_{\mathscr{W}}(u)= \int_{\mathbb{R}^{N}\times \mathbb{R}^{N}}\frac{|u(x)-u(y)|^{p(x,y)}}{|x-y|^{N+sp(x,y)}} \,dx\,dy}.$$
  The following proposition show  the relationship between the norm $\|\cdot\|_{\mathscr{W}}$   and the $\rho_{\mathscr{W}}$ convex modular function.
 
 \begin{proposition}\label{lw0}
For $u\in \mathscr{W}$ and $(u_{k})_{k\in \mathbb{N}}\subset\mathscr{W}$, we have
\begin{itemize}
\item[$(a)$]For $u \in \mathscr{W}\setminus \{0 \}$, $\zeta = \|u\|_{\mathscr{W} } $ if and only if $ \rho_{\mathscr{W}}\big(\frac{u}{\zeta}\big)=1 $;
    \item[$(b)$]$\|u\|_{  \mathscr{W}}\geqslant 1\Rightarrow \|u\|_{ \mathscr{W}}^{p^{-}}\leqslant \rho_{ \mathscr{W}}(u)\leqslant \|u\|_{  \mathscr{W} }^{p^{+}};$
    \item[$(c)$] $\|u\|_{  \mathscr{W}}\leqslant 1\Rightarrow \|u\|_{  \mathscr{W}}^{p^{+}}\leqslant \rho_{ \mathscr{W}}(u)\leqslant \|u\|_{  \mathscr{W} }^{p^{-}};$ 
    \item[$(d)$] $\lim\limits_{k\to+\infty} \|u_{k}-u\|_{  \mathscr{W}}=0 \Leftrightarrow \lim\limits_{k\to+\infty} \rho_{ \mathscr{W}}(u_{k}-u)=0;$
 \item[$(e)$] $\lim\limits_{k\to+\infty} \|u_{k}\|_{  \mathscr{W}}=+\infty \Leftrightarrow \lim\limits_{k\to+\infty} \rho_{ \mathscr{W}}(u_{k})=+\infty.$
\end{itemize}
\end{proposition}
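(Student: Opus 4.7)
\smallskip
\noindent\textbf{Proof plan for Proposition \ref{lw0}.} The strategy mirrors the classical Luxemburg-norm arguments used for Proposition \ref{masmenos}, but carried out in the double-integral modular $\rho_{\mathscr{W}}$. The crucial structural fact I would exploit throughout is that, for fixed $u \in \mathscr{W}\setminus\{0\}$, the function
\[
\varphi_u(\zeta) := \rho_{\mathscr{W}}\!\left(\frac{u}{\zeta}\right) = \int_{\mathbb{R}^{N}\times\mathbb{R}^{N}} \frac{|u(x)-u(y)|^{p(x,y)}}{\zeta^{p(x,y)}\,|x-y|^{N+sp(x,y)}}\,dx\,dy, \qquad \zeta>0,
\]
is continuous on $(0,+\infty)$ (by dominated convergence, since $p^{-}\leqslant p(x,y)\leqslant p^{+}$ gives a locally uniform upper bound), strictly decreasing in $\zeta$, with $\varphi_u(\zeta)\to+\infty$ as $\zeta\to0^{+}$ and $\varphi_u(\zeta)\to 0$ as $\zeta\to+\infty$.

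For part $(a)$ I would combine this continuity/monotonicity with the definition $\|u\|_{\mathscr{W}}=\inf\{\zeta>0:\varphi_u(\zeta)\leqslant 1\}$: the set is an interval of the form $[\zeta_{0},+\infty)$, and the intermediate value theorem forces $\varphi_u(\zeta_{0})=1$, while strict monotonicity gives uniqueness. Parts $(b)$ and $(c)$ are then obtained by splitting the pointwise factor $\zeta^{-p(x,y)}$ via $p^{-}\leqslant p(x,y)\leqslant p^{+}$: setting $\zeta=\|u\|_{\mathscr{W}}$ and using $\varphi_u(\zeta)=1$ from $(a)$, in the case $\zeta\geqslant 1$ one has $\zeta^{-p^{+}}\leqslant \zeta^{-p(x,y)}\leqslant \zeta^{-p^{-}}$, which upon multiplying and integrating yields
\[
\|u\|_{\mathscr{W}}^{\,p^{-}} \;\leqslant\; \rho_{\mathscr{W}}(u) \;\leqslant\; \|u\|_{\mathscr{W}}^{\,p^{+}},
\]
and the reverse ordering of the inequalities for $\zeta\leqslant 1$ gives $(c)$.

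Once $(b)$ and $(c)$ are in hand, parts $(d)$ and $(e)$ are immediate sandwich arguments. For $(d)$, apply $(b)$--$(c)$ to $u_{k}-u \in \mathscr{W}$: if $\|u_{k}-u\|_{\mathscr{W}}\to 0$, then eventually $\|u_{k}-u\|_{\mathscr{W}}\leqslant 1$ and $(c)$ forces $\rho_{\mathscr{W}}(u_{k}-u)\leqslant \|u_{k}-u\|_{\mathscr{W}}^{\,p^{-}}\to 0$; conversely, if $\rho_{\mathscr{W}}(u_{k}-u)\to 0$, then eventually $\rho_{\mathscr{W}}(u_{k}-u)\leqslant 1$ and the second inequality in $(c)$ reversed (equivalently, the lower bound $\|u_{k}-u\|_{\mathscr{W}}^{\,p^{+}}\leqslant \rho_{\mathscr{W}}(u_{k}-u)$ valid for $\|u_{k}-u\|_{\mathscr{W}}\leqslant 1$) yields $\|u_{k}-u\|_{\mathscr{W}}\to 0$. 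Part $(e)$ is analogous using $(b)$ in the regime $\|u_{k}\|_{\mathscr{W}}\geqslant 1$.

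The only delicate point I anticipate is the justification in $(a)$ that $\varphi_{u}$ is actually finite in a neighbourhood of $\zeta_{0}$ (so that continuity applies there), which is where the definition of $\mathscr{W}$ as the set of $u$ whose Gagliardo seminorm is finite for \emph{some} $\zeta>0$ enters: once $\varphi_{u}(\zeta_{*})<+\infty$ for one $\zeta_{*}$, monotonicity gives finiteness for all larger $\zeta$, and the splitting $p^{-}\leqslant p(x,y)\leqslant p^{+}$ shows that the finiteness propagates (with the appropriate comparison of $\zeta^{p(x,y)}$ to $\zeta^{p^{\pm}}$) down to the whole positive half-line. Everything else is bookkeeping on the two exponent cases $\zeta\geqslant 1$ and $\zeta\leqslant 1$.
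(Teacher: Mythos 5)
Your proof plan is correct and is the standard Luxemburg-norm argument; the paper itself states Proposition~\ref{lw0} without proof, treating it as the direct analogue of Proposition~\ref{masmenos} for the modular $\rho_{\mathscr{W}}$, so your argument is precisely the one the authors implicitly rely on. The one place worth being more explicit is the reverse implication in $(d)$ (and symmetrically in $(e)$): to invoke part $(c)$ you must first argue via the contrapositive of $(b)$ that $\rho_{\mathscr{W}}(u_k-u)<1$ already forces $\|u_k-u\|_{\mathscr{W}}\leqslant 1$, before applying the bound $\|u_k-u\|_{\mathscr{W}}^{p^+}\leqslant\rho_{\mathscr{W}}(u_k-u)$; you gesture at this but it deserves a sentence.
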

 
 \subsection{Auxiliary Result}
  \hfill \break
% The Problem \eqref{e1.1} has a variational structure, and the  space where we should look for solutions are variable exponents  fractional Sobolev spaces $\mathscr{W}$.

 \begin{lemma} \label{ll1}
Assume that  $(a_{1})$-$(a_{3})$, and $(\mathcal{K})$ is  hold. We consider the functional $\Phi:\mathscr{W}\to \mathbb{R}$ defined by
\begin{equation*}
\Phi(u)= \int_{\mathbb{R}^{N}\times \mathbb{R}^{N}}\mathscr{A}(u(x)-u(y))K(x,y)\,dx\,dy  \mbox{ for all }\hspace{0.1cm} u \in \mathscr{W},
\end{equation*}
has the following  properties:
\begin{itemize}
\item[$(i)$] The functional $\Phi$ is well defined on $\mathscr{W}$, is of   class  $C^{1}(\mathscr{W}, \mathbb{R})$, and its G\^ateaux derivative is given by
\begin{equation*}\label{phi'}
\langle \Phi'(u), v \rangle = \int_{\mathbb{R}^{N}\times \mathbb{R}^{N}}\mathcal{A}(u(x)-u(y))(v(x)-v(y))K(x,y)\,dx\,dy \mbox{ for all } \hspace{0.1cm} u, v \in \mathscr{W};
\end{equation*} 
\item[$(ii)$]  The functional $\Phi$ is weakly lower semicontinuous, that is, $u_k \rightharpoonup u$ in $\mathscr{W}$ as $ k \to +\infty$ implies that $\displaystyle{\Phi(u) \leqslant\liminf_{k\to +\infty} \Phi(u_k)}$; 
\item[$(iii)$] The  functional $\Phi' : \mathscr{W}\to \mathscr{W}'$ is an operator of type $(S_{+})$ on $\mathscr{W}$, that is, if 
  \begin{equation} \label{inffo}
  u_k \rightharpoonup u  \mbox{ in }\mathscr{W} \mbox{ and } \limsup_{k \to +\infty}\,\langle \,\Phi'(u_k), u_k-u \rangle\leqslant 0,
  \end{equation}
  then $u_k\to u$ in $\mathscr{W}$ as $k\to +\infty$.
\end{itemize}
\end{lemma}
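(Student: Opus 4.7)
The first step is to control the integrand using $(a_2)$. Integrating $|\mathcal{A}(\tau)| \leqslant C_{\mathcal{A}} \tau^{p(x,y)-1}$ from $0$ to $|t|$ gives $\mathscr{A}(t) \leqslant \frac{C_{\mathcal{A}}}{p^{-}}|t|^{p(x,y)}$, and combining with the upper bound in $(\mathcal{K})$ yields $\mathscr{A}(u(x)-u(y))K(x,y) \leqslant \frac{C_{\mathcal{A}} b_1}{p^{-}}\frac{|u(x)-u(y)|^{p(x,y)}}{|x-y|^{N+sp(x,y)}}$, so $\Phi(u) \leqslant \frac{C_{\mathcal{A}} b_1}{p^{-}}\rho_{\mathscr{W}}(u)<+\infty$ for $u\in\mathscr{W}$. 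For the Gateaux derivative, I would form the difference quotient $\frac{\Phi(u+tv)-\Phi(u)}{t}$, apply the mean value theorem in the form $\frac{\mathscr{A}(a+tb)-\mathscr{A}(a)}{t} = \mathcal{A}(a+\theta_t b)\,b$ with $\theta_t\in(0,t)$, and pass to the limit via the dominated convergence theorem; the dominant is built from $(a_2)$, Young's inequality with exponents $p(x,y)$ and $p'(x,y)$, and the fact that both $u$ and $v$ belong to $\mathscr{W}$. Continuity of $\Phi'$ (hence $\Phi\in C^1$) follows by a Vitali-type argument applied to a sequence $u_n\to u$ in $\mathscr{W}$: the integrands converge a.e.\ along a subsequence and are equi-integrable thanks to the $p(x,y)$-growth of $\mathcal{A}$ and Proposition \ref{masmenos}.

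\textbf{Plan for part (ii).} Since $(a_1)$ asserts that $\mathscr{A}$ is strictly convex and $u\mapsto u(x)-u(y)$ is linear in $u$, the integrand $u\mapsto \mathscr{A}(u(x)-u(y))K(x,y)$ is convex for every $(x,y)$, so $\Phi$ is convex on $\mathscr{W}$. Together with the strong continuity established in (i), this yields weak lower semicontinuity by the standard Mazur-type argument (a convex strongly continuous functional on a Banach space is weakly sequentially lower semicontinuous).

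\textbf{Plan for part (iii), the main obstacle.} Set $U_k(x,y)=u_k(x)-u_k(y)$ and $U(x,y)=u(x)-u(y)$. The strict convexity of $\mathscr{A}$ and $\mathscr{A}'=\mathcal{A}$ produce the monotonicity inequality
\begin{equation*}
\bigl(\mathcal{A}(U_k)-\mathcal{A}(U)\bigr)(U_k-U)\geqslant 0,\qquad\text{with equality iff }U_k=U.
\end{equation*}
Multiplying by $K(x,y)\geqslant 0$ and integrating gives $\langle \Phi'(u_k)-\Phi'(u),u_k-u\rangle\geqslant 0$. Since $u_k\rightharpoonup u$ in $\mathscr{W}$ and $\Phi'(u)\in\mathscr{W}'$, one has $\langle \Phi'(u),u_k-u\rangle\to 0$, so combining with the hypothesis \eqref{inffo} yields
\begin{equation*}
\lim_{k\to+\infty}\int_{\mathbb{R}^{N}\times\mathbb{R}^{N}}\bigl(\mathcal{A}(U_k)-\mathcal{A}(U)\bigr)(U_k-U)K(x,y)\,dx\,dy=0.
\end{equation*}
By Lemma \ref{2.11} and the compact embedding $\mathscr{W}\hookrightarrow L^{p(\cdot)}(\Omega)$, along a subsequence $u_k\to u$ a.e.\ in $\Omega$, hence $U_k\to U$ a.e.\ in $\mathbb{R}^{N}\times\mathbb{R}^{N}$. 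The nonnegative integrands above then converge to $0$ a.e.\ and in $L^{1}$. The delicate last step is to promote this to $\rho_{\mathscr{W}}(u_k-u)\to 0$: I would use a Brezis--Lieb / Fatou argument together with the pointwise lower bound coming from $(a_2)$, which provides, via a convex-combination / Simon-type estimate tailored to the functional $\mathscr{A}$, control of the form $|U_k-U|^{p(x,y)}\leqslant C\bigl[(\mathcal{A}(U_k)-\mathcal{A}(U))(U_k-U)\bigr]^{\alpha}\cdot\bigl[|U_k|+|U|\bigr]^{\beta}$ with appropriate exponents in the regimes $p(x,y)\geqslant 2$ and $1<p(x,y)<2$, followed by H\"older's inequality in the variable exponent setting. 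Concluding that $\rho_{\mathscr{W}}(u_k-u)\to 0$ and invoking Proposition \ref{lw0}(d) then gives $\|u_k-u\|_{\mathscr{W}}\to 0$, which is the $(S_+)$ property. The main technical difficulty is precisely this last passage, since $\mathcal{A}$ is only qualitatively governed by $(a_1)$--$(a_3)$ and the variable exponent prevents a direct application of the classical Simon inequalities.
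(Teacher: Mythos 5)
Parts (i) and (ii) are fine and are essentially what the paper does: the paper dismisses (i) as standard and derives (ii) from the monotonicity of $\Phi'$ (quoting a lemma from Kavian), which is just a reformulation of your convexity-plus-Mazur argument. In part (iii) you also track the paper through the crucial intermediate steps: monotonicity of $\Phi'$ combined with \eqref{inffo} forces $\langle\Phi'(u_k)-\Phi'(u),u_k-u\rangle\to 0$, hence the nonnegative integrand $\mathcal{U}_k:=(\mathcal{A}(U_k)-\mathcal{A}(U))(U_k-U)K\to 0$ in $L^1$, and (via compact embedding --- arguably more directly than the paper's contradiction-based ``Claim a'') $U_k\to U$ a.e.\ along a subsequence. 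Up to this stage the two arguments coincide.

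The gap is in your final step. You propose to close via a Simon-type pointwise inequality of the form
$|U_k-U|^{p(x,y)}\leqslant C\,\bigl[(\mathcal{A}(U_k)-\mathcal{A}(U))(U_k-U)\bigr]^{\alpha}\bigl[|U_k|+|U|\bigr]^{\beta}$,
split according to $p(x,y)\geqslant 2$ or $p(x,y)<2$. Such estimates are a feature of the specific nonlinearity $\mathcal{A}(t)=|t|^{p-2}t$; they do \emph{not} follow from $(a_1)$--$(a_3)$. The growth condition $(a_2)$ controls $\mathcal{A}(t)t$ and $|\mathcal{A}(t)|$ at a single point, but gives no quantitative lower bound for the difference $(\mathcal{A}(a)-\mathcal{A}(b))(a-b)$: strict convexity of $\mathscr{A}$ only ensures this quantity is positive for $a\neq b$, with no uniform modulus. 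Expanding $(\mathcal{A}(a)-\mathcal{A}(b))(a-b)$ and bounding termwise with $(a_2)$ produces both a positive contribution $c_{\mathcal{A}}(|a|^{p}+|b|^{p})$ and a negative one $-C_{\mathcal{A}}(|a|^{p-1}|b|+|b|^{p-1}|a|)$, which cannot be resolved without assuming $c_{\mathcal{A}}$ and $C_{\mathcal{A}}$ are comparable in a favorable way. The paper sidesteps this entirely. After establishing (from the monotonicity inequalities and weak lower semicontinuity) that $\Phi(u_k)\to\Phi(u)$, it introduces the auxiliary nonnegative integrand
$g_k = \bigl[\tfrac12\mathscr{A}(U_k)+\tfrac12\mathscr{A}(U)-\mathscr{A}\bigl(\tfrac{U_k-U}{2}\bigr)\bigr]K$
(using convexity and evenness of $\mathscr{A}$), applies Fatou's lemma together with $\Phi(u_k)\to\Phi(u)$, and concludes $\limsup\int\mathscr{A}((U_k-U)/2)K\leqslant 0$. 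The lower bound $\mathscr{A}(t)\geqslant\frac{c_{\mathcal{A}}}{p^{+}}|t|^{p(x,y)}$ from $(a_2)$ and $(a_3)$ then bounds this integral from below by a constant times $\rho_{\mathscr{W}}(u_k-u)$, forcing $\|u_k-u\|_{\mathscr{W}}\to 0$ via Proposition \ref{lw0}. This is the step you are missing: the quantitative coercivity is applied to $\mathscr{A}$ (where $(a_2)$--$(a_3)$ do supply it), not to a nonexistent difference estimate for $\mathcal{A}$.
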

 The proof of the  result above    will be referred to in Appendix \ref{apendice}.
 
 \begin{definition}
We say that $u\in  \mathscr{W}$ is a weak solution to the problem \eqref{e1.1} if, and only if,
$$\int_{\mathbb{R}^{N}\times \mathbb{R}^{N}}\mathcal{A}(u(x)-u(y))(v(x)-v(y))K(x,y)\,dx\,dy=\lambda\int_{\Omega}f(x,u)v\,dx \mbox{ for all }  \hspace{0.1cm} v\in \mathscr{W}.$$
\end{definition}

\noindent The weak solutions to the problem  \eqref{e1.1} coincide with the critical points of the Euler Lagrange functional $\Psi_{\lambda}:\mathscr{W} \to \mathbb{R}$ given by
\begin{equation}\label{euler}
\Psi_{\lambda}(u)=\Phi(u)-\lambda\int_{\Omega}F(x,u)\,dx \mbox{ for all } u\in \mathscr{W}.
\end{equation} 
 Moreover,   by Lemma \ref{ll1} and standard arguments,  the functional $\Psi_{\lambda}$ is Fréchet differentiable in $u\in \mathscr{W}$  and
\begin{equation*}
\langle \Psi'_{\lambda}(u), v \rangle = \int_{\mathbb{R}^{N}\times \mathbb{R}^{N}}\mathcal{A}(u(x)-u(y))(v(x)-v(y))K(x,y)\,dx\,dy -\lambda\int_{\Omega}f(x,u)v\,dx \mbox{ for all }  v \in \mathscr{W}.
\end{equation*}

  \subsection{ Standard results for the existence and multiplicity of weak solutions}
    \hfill \break
  In this subsection, we enunciated some  definitions and general theorems of the existence and multiplicity of weak solutions that we will use along of the paper to prove our main results, stated in Theorem \ref{cc}, Theorem \ref{t7}, Theorem \ref{fountain}, Theorem \ref{t5}, Theorem \ref{t6}, and Theorem \ref{genus}.

 \begin{definition}
Let $X$ be a real Banach space and $\Psi\in C^{1}(X,\mathbb{R}).$ We say that $\Psi$ satisfies the $(C)_{c}$ condition if for every sequence $(u_{k})_{k\in \mathbb{N}}\subset X$ such that $\Psi(u_{k})\to c$ and $\|\Psi'(u_{k})\|_{X'}(1+\|u_{k}\|_{X})\to 0,$ as $k\to +\infty,$ has a convergent subsequence.
\end{definition}
 The condition  $(C)_{c}$,  introduced by Cerami in \cite{cerami1,cerami2}, is a little more weak version of the Palais–Smale $(PS)_{c}$ condition, acondition more common that we find in the literature. Thus, since the Deformation Theorem is still valid under the Cerami a condition it follows that the Mountain Pass Theorem, Fountain Theorem, and Dual Fountain Theorem under Palais–Smale $(PS)_{c}$ and $(PS)^{\star}_{c}$ condition holds true also under this compactness.

 The results below Mountain Pass Theorem, $"\mathbb{Z}_{2}- symmetric "$ version (for even functionals) Mountain Pass Theorem, Fountain Theorem and   Dual Fountain Theorem    can be seen respectively in \cite[Theorem I]{MAGA}, \cite[Theorem 9.12]{rab},  \cite[Theorem 2.9]{shi bo}, and \cite[Theorem 2]{barst}.

\begin{theorem} \label{MountainPass}
   Let $X$ be a real Banach space, let $\Psi:X\to \mathbb{R}$ 
   be a functional of class $C^{1}(X,\mathbb{R})$ that satisfies the 
   $(C)_{c}$ condition for any   
   $c>0,$ $\Psi(0)=0,$ and the following conditions hold:
\begin{itemize}

\item[(i)] There exist positive constants $\rho$ and $\mathcal{R}$ such that $\Psi(u)\geqslant \mathcal{R}$ for any $u\in X$ with $\|u\|_{X}=\rho;$

\item[(ii)] There exists a function $e\in X$ such that $\|e\|_{X}>\rho$ and $\Psi(e)<0.$
\end{itemize} 
Then, the functional $\Psi$ has a critical value $c\geqslant \mathcal{R},$ that is, there exists $u\in X$ such that $\Psi(u)=c$ and $\Psi'(u)=0$ in 
$X'.$ 
\end{theorem}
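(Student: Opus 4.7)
The plan is to establish the classical minimax machinery adapted to the Cerami condition. First I would define the class of admissible paths
$$\Gamma = \{\gamma \in C([0,1], X) : \gamma(0) = 0,\ \gamma(1) = e\}$$
and the candidate critical value
$$c = \inf_{\gamma \in \Gamma}\ \max_{t \in [0,1]} \Psi(\gamma(t)).$$
Since $\|e\|_X > \rho$ and $\|0\|_X < \rho$, by continuity every $\gamma \in \Gamma$ meets the sphere $S_\rho = \{u \in X : \|u\|_X = \rho\}$ at some $t_0 \in (0,1)$. Hypothesis (i) then gives $\max_{t} \Psi(\gamma(t)) \geq \Psi(\gamma(t_0)) \geq \mathcal{R}$, so $c \geq \mathcal{R} > 0$. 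Moreover, since $\Psi(0)=0$ and $\Psi(e)<0$, the endpoints lie strictly below level $c$.

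The core step is to show that $c$ is a critical value. The cleanest route, tailored to the Cerami setting, is to apply Ekeland's variational principle on the complete metric space $(\Gamma, d)$ with $d(\gamma_1,\gamma_2) = \max_{t\in[0,1]} \|\gamma_1(t)-\gamma_2(t)\|_X$ to the continuous functional $\Theta(\gamma) = \max_{t\in[0,1]} \Psi(\gamma(t))$. For each $\varepsilon_k \downarrow 0$, pick $\gamma_k \in \Gamma$ almost minimizing $\Theta$ within $\varepsilon_k$; Ekeland yields a perturbed path $\tilde{\gamma}_k$ along which one can extract $t_k \in [0,1]$ with $u_k := \tilde{\gamma}_k(t_k)$ satisfying $\Psi(u_k) \to c$ and, after the standard deformation-type argument weighted by $(1 + \|u\|_X)^{-1}$, $(1+\|u_k\|_X)\|\Psi'(u_k)\|_{X'} \to 0$. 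Thus $(u_k)$ is a Cerami sequence at level $c$. By the assumed $(C)_c$ condition it admits a subsequence converging to some $u \in X$, and continuity of $\Psi$ and $\Psi'$ gives $\Psi(u) = c$ and $\Psi'(u) = 0$.

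An equivalent route is to argue by contradiction: if $c$ were a regular value, a quantitative deformation lemma compatible with the Cerami condition produces a homotopy $\eta \in C([0,1]\times X, X)$ with $\eta(1,\Psi^{c+\varepsilon}) \subset \Psi^{c-\varepsilon}$ and $\eta(\tau, u) = u$ whenever $\Psi(u) \leq c - 2\varepsilon$, for suitably small $\varepsilon$. Choosing $\gamma \in \Gamma$ with $\max \Psi(\gamma) < c + \varepsilon$ and setting $\tilde{\gamma}(t) = \eta(1, \gamma(t))$ gives a path still belonging to $\Gamma$ (since $\Psi(0),\Psi(e) < c - 2\varepsilon$, so the endpoints are fixed) but with $\max\Psi(\tilde{\gamma}) < c - \varepsilon$, contradicting the definition of $c$.

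The main obstacle is not the topological part (which is classical once $c$ is well-defined) but producing a genuine Cerami sequence rather than a mere Palais--Smale sequence at the minimax level: both routes above require carefully rescaling the underlying pseudo-gradient (or the metric used in Ekeland) by a factor of $(1 + \|u\|_X)^{-1}$, so that the resulting minimizing sequence inherits the extra weight $(1+\|u_k\|_X)$ in the gradient estimate. This rescaling is the technical heart of the Cerami version of the deformation lemma and is what makes the argument work under the weaker compactness assumption $(C)_c$ in place of $(PS)_c$.
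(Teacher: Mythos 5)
The paper does not prove Theorem~\ref{MountainPass}; it is explicitly imported from Costa and Magalh\~aes \cite[Theorem I]{MAGA}, so there is no internal proof to compare against. Your outline is the correct way to establish a Mountain Pass Theorem under the Cerami condition, and your second route, via a quantitative deformation lemma whose pseudo-gradient flow is rescaled by $(1+\|u\|_X)^{-1}$ (in the spirit of Bartolo--Benci--Fortunato), is essentially the argument in the cited reference; the minimax-level bound $c\geqslant\mathcal{R}>0$ from the intermediate-value argument on $S_\rho$ and the fixing of endpoints because $\Psi(0),\Psi(e)<c$ are exactly right. The one place where the write-up is looser than it should be is the Ekeland route: applying Ekeland's variational principle to $\Theta(\gamma)=\max_t\Psi(\gamma(t))$ on $(\Gamma,d)$ in the usual way produces a Palais--Smale sequence at level $c$, not a Cerami sequence, and the sentence ``after the standard deformation-type argument weighted by $(1+\|u\|_X)^{-1}$'' quietly imports the weighted deformation lemma into what is billed as an Ekeland argument, so the two routes as you present them are not actually independent. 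To make the Ekeland route self-contained one would need a Cerami-adapted variational principle (for instance Zhong's geometric form of Ekeland's principle, which builds the $(1+\|u\|)$ weight into the metric). Since you correctly identify the rescaling of the pseudo-gradient as the technical heart, the proposal is sound, but the deformation route should be treated as the primary proof and the Ekeland paragraph either made precise along those lines or removed.
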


\begin{theorem}\label{psm}
 Assume that $X$ has infinite dimension and let $\Psi\in C^{1}(X, \mathbb{R})$ be a functional satisfying the  $(C)_{c}$ condition as well as the following properties
 \begin{itemize}
 \item[$i)$]   $\Psi(0)=0$, and there exist two constants $r, \rho> 0$ such that $\Psi_{|_{\partial B_{r}}} \geqslant \rho$ ;
 \item[$ii)$] $\Psi$ is even;
 \item[$iii)$] For all finite dimensional subspace $\widehat{X} \subset  X$ there exists $\mathcal{R} = \mathcal{R}(\widehat{X}) > 0$ such that 
 \begin{equation*}
 \Psi(u)\leqslant 0 \mbox{ for all } u \in \widehat{X} \setminus B_{\mathcal{R}}(\widehat{X})
 \end{equation*}
 where $B_{\mathcal{R}}(\widehat{X})= \{u \in \widehat{X}: \|u\|_{X}< \mathcal{R}\}$.
 \end{itemize}
 Then $\Psi$ possesses an unbounded sequence of critical values.
 \end{theorem}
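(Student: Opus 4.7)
\textbf{Proof plan for Theorem \ref{psm}.}
The plan is to adapt the classical Ambrosetti--Rabinowitz symmetric mountain pass argument (in the spirit of \cite{rab}), producing a family of minimax values indexed by the dimension of sweeping subspaces; the weakening from $(PS)_c$ to the Cerami condition $(C)_c$ will be accommodated by a suitably weighted pseudo-gradient.

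For each $k\in\mathbb{N}$, I would fix a $k$-dimensional subspace $X_{k}\subset X$, chosen nested ($X_{k}\subset X_{k+1}$), and let $\mathcal{R}_{k}=\mathcal{R}(X_{k})$ be given by (iii), enlarged if necessary so that $\mathcal{R}_{k}>r$. Writing $D_{k}=\{u\in X_{k}:\|u\|_{X}\leqslant \mathcal{R}_{k}\}$, I would define the admissible class and minimax level
$$\Gamma_{k}=\{\gamma\in C(D_{k},X):\gamma\text{ is odd and }\gamma|_{\partial D_{k}}=\mathrm{id}\},\qquad c_{k}=\inf_{\gamma\in\Gamma_{k}}\max_{u\in D_{k}}\Psi(\gamma(u)).$$
Since the identity belongs to $\Gamma_{k}$ and $\Psi$ is continuous on the compact set $D_{k}$, one has $c_{k}<+\infty$. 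For the lower bound $c_{k}\geqslant \rho$, the function $u\mapsto \|\gamma(u)\|_{X}$ is continuous on the connected set $D_{k}$, equals $0$ at $u=0$ (since $\gamma(0)=0$ by oddness), and equals $\mathcal{R}_{k}>r$ at any point of $\partial D_{k}$ (where $\gamma=\mathrm{id}$); the intermediate value theorem then forces $\gamma(D_{k})\cap\partial B_{r}\neq\emptyset$, and hypothesis (i) yields $\max_{u\in D_{k}}\Psi(\gamma(u))\geqslant \rho$.

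Next, I would show each $c_{k}$ is a critical value via an equivariant deformation lemma under $(C)_c$. Since $\Psi$ is $C^{1}$ and even, a pseudo-gradient vector field can be chosen odd; using $(C)_c$ one builds an odd flow $\eta_{t}$ such that $\eta_{t}|_{\partial D_{k}}=\mathrm{id}$ (because $\Psi\leqslant 0<c_{k}-\varepsilon$ on $\partial D_{k}$ by (iii)) and such that $\eta_{1}$ carries $\{\Psi\leqslant c_{k}+\varepsilon\}$ into $\{\Psi\leqslant c_{k}-\varepsilon\}$ whenever $c_{k}$ is a regular value. Composing with a nearly optimal $\gamma\in\Gamma_{k}$ yields $\eta_{1}\circ\gamma\in\Gamma_{k}$ with $\max\Psi(\eta_{1}\circ\gamma)<c_{k}$, contradicting the definition of $c_{k}$. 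The monotonicity $c_{k}\leqslant c_{k+1}$ follows from the nesting $X_{k}\subset X_{k+1}$ after passing, if needed, to the equivalent genus-based class $\Sigma_{k}=\{A\subset X\setminus\{0\}:A\text{ closed, symmetric, }\gamma_{\mathrm{Kr}}(A)\geqslant k\}$.

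Finally, to show $(c_{k})$ is unbounded, I would argue by contradiction: if $c_{k}\leqslant M$ for all $k$, the monotone sequence accumulates at some $c^{\ast}\leqslant M$, and the standard multiplicity corollary for the Krasnoselskii genus would force the critical set $K_{c^{\ast}}=\{u\in X:\Psi(u)=c^{\ast},\ \Psi'(u)=0\}$ to have infinite genus, contradicting the compactness supplied by $(C)_{c^{\ast}}$ together with the fact that compact symmetric subsets of $X\setminus\{0\}$ have finite Krasnoselskii genus. The hardest step will be the equivariant deformation lemma under $(C)_c$: one must scale the pseudo-gradient by a weight behaving like $(1+\|u\|_{X})^{-1}$ to convert the Cerami estimate $\|\Psi'(u_{n})\|_{X'}(1+\|u_{n}\|_{X})\to 0$ into an effective descent estimate along the flow, while simultaneously preserving oddness and the identity on $\partial D_{k}$.
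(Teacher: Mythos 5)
The paper does not prove this theorem; it is simply cited as the $\mathbb{Z}_2$-symmetric version of the Mountain Pass Theorem from \cite[Theorem 9.12]{rab}, together with the remark (made just before the statement, in Subsection 2.4) that the Deformation Theorem still holds under the Cerami condition, so that the conclusions of the Mountain Pass, Fountain, and Dual Fountain theorems carry over from $(PS)_c$ to $(C)_c$ verbatim. Your plan correctly reconstructs the classical Rabinowitz architecture (nested finite-dimensional sweeping subspaces, odd-map minimax classes, intersection lemma for $c_k\geqslant\rho$, equivariant Cerami deformation with a weight $(1+\|u\|_X)^{-1}$), and all of that is sound in outline.

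The gap is in your final unboundedness argument. You claim that if $c_k\leqslant M$ for all $k$, then the ``standard multiplicity corollary'' forces $\gamma(K_{c^\ast})=\infty$, where $c^\ast=\lim c_k$. But the multiplicity corollary only says: \emph{if} $c_j=c_{j+1}=\cdots=c_{j+p}$, \emph{then} $\gamma(K_{c_j})\geqslant p+1$. When the monotone bounded sequence is \emph{strictly} increasing to $c^\ast$ without ever repeating, that corollary is never triggered and yields no information about $\gamma(K_{c^\ast})$; in fact $c^\ast$ need not be one of the $c_k$ at all. The correct argument (as in the proof of \cite[Theorem 9.12]{rab}) is the contrapositive deformation argument: under $(C)_{c^\ast}$ the set $K_{c^\ast}$ is compact, hence has \emph{finite} genus $m$, and one chooses a closed symmetric neighborhood $N$ of $K_{c^\ast}$ with $\gamma(\overline N)=m$; a Cerami deformation then maps $\{\Psi\leqslant c^\ast+\varepsilon\}\setminus N$ into $\{\Psi\leqslant c^\ast-\varepsilon\}$, and subadditivity of the genus applied to a nearly optimal set for $c_{j+m}$ produces a set of genus $\geqslant j$ sitting below level $c^\ast-\varepsilon<c_j$ --- a contradiction. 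A second, smaller issue: with the odd-map class $\Gamma_k$ the monotonicity $c_k\leqslant c_{k+1}$ is not immediate from nesting (restricting $\gamma\in\Gamma_{k+1}$ to $D_k$ does not give an element of $\Gamma_k$, since the restriction is only the identity on $\partial D_{k+1}$, not on $\partial D_k$); passing to the genus class $\Sigma_k$, as you parenthetically suggest, is indeed the standard fix, but then you should verify the lower bound $c_k\geqslant\rho$ for that class, which requires a Borsuk--Ulam-type intersection property rather than the simple intermediate value argument you give for $\Gamma_k$.
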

 
\noindent Let  $X$ be a real, reflexive, and separable Banach space, it is known (\cite[Chapter 4]{fabian}  or \cite[Section 17]{zhao} or \cite{schauder}) that for a separable and reflexive Banach space there exist sequence $(e_l)_{l \in \mathbb{N}}\subset X$ and $(e^{\star}_{l})_{\mathbb{N}}\subset X^{\star}$ such that
$$X=\overline{span\{e_{l}:l=1,2,\ldots\}},
\; X^{\star}=\overline{span\{e_{l}^{\star}:l=1,2,\ldots\}}^{\omega^{\star}},$$
and
$$ \langle e_{i}^{\star},e_{l}\rangle
= \left\{\begin{array}{rll}
1 & \hbox{se} & i=l, \\
0 & \hbox{se} & i\neq l.
\end{array}\right.$$
%(Note that, since \cite{zhao} is written in Chinese, the above assertions are based on
%the affirmations made by Fan and Han in \cite{Xi}, or by Yao in \cite{Yao}.) 

\noindent We denote
$$X_{l}  =span\{e_{l}\},\; Y_{j}=\bigoplus_{l=1}^{j}X_{l}=span\{e_{1},\ldots,e_{j}\},\mbox{ and } Z_{j}=\overline{\bigoplus_{j=l}^{\infty}X_{l}}=\overline{span\{e_{j},e_{j+1}\ldots\}}.$$  
 
 \begin{theorem}\label{foun}
Assume
\begin{itemize}
\item[$(h_1)$] $X$ is a Banach space, $\Psi\in C^{1}(X,\mathbb{R})$ is an even functional;\\
If for every $j\in \mathbb{N}$ there exit $\rho_{j}>r_{j}>0$ such that
\item[$(h_2)$]$b_{j}:=\inf \{ \Psi(u):u\in Z_{j},\|u\|_{X}=r_{j}\}\to +\infty\,\,as\,\,j\to +\infty;$
\item[$(h_3)$]$a_{j}:=\sup\{\Psi(u):u\in Y_{j},\|u\|_{X}=\rho_{j}\}\leqslant  0;$
\item[$(h_4)$]$\Psi$ satisfies the $(C)_{c}$ condition for every $c>0.$
\end{itemize}
Then $\Psi$ has a sequence of critical points $(u_j)_{j \in \mathbb{N}}$ such that $\Psi(u_{j})\to +\infty$.
\end{theorem}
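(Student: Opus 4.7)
The plan is to realize the critical values $c_j$ as minimax values over an odd-map class modelled on the finite-dimensional ball $B_j := \{ u \in Y_j : \|u\|_X \leqslant \rho_j\}$, and then use the intersection property provided by the Borsuk--Ulam theorem to sandwich $c_j$ between $b_j$ and something finite, so that the Cerami condition upgrades them to genuine critical values. Explicitly, for each $j \in \mathbb{N}$ I would set
\begin{equation*}
\Gamma_j := \bigl\{ \gamma \in C(B_j, X) : \gamma \text{ odd},\ \gamma|_{\partial B_j} = \mathrm{id}_{\partial B_j} \bigr\}, \qquad c_j := \inf_{\gamma \in \Gamma_j}\,\max_{u \in B_j} \Psi(\gamma(u)).
\end{equation*}
Note that $\mathrm{id} \in \Gamma_j$, so $\Gamma_j \neq \emptyset$, and the hypothesis $(h_1)$ that $\Psi$ is even is what makes this odd-symmetric class compatible with $\Psi$.

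The first key step is the \emph{intersection lemma}: for every $\gamma \in \Gamma_j$ and every $r_j > 0$, the image $\gamma(B_j)$ meets the sphere $S_j := \{u \in Z_j : \|u\|_X = r_j\}$. This is where Borsuk--Ulam enters: because $\dim Y_j = j$ is finite and $Z_j$ has codimension $j-1$ (in the sense of topological genus), the composition of $\gamma$ with the projection onto $Y_{j-1}$ along $Z_j$ sends an odd map of $B_j$ into $Y_{j-1}$ and, after rescaling, must vanish at some interior point by Borsuk--Ulam; this yields a preimage with $\gamma(u) \in Z_j$, and an elementary normalization using $\gamma|_{\partial B_j} = \mathrm{id}$ together with $\rho_j > r_j$ forces the intersection with the sphere of radius $r_j$. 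From the intersection lemma and $(h_2)$ I immediately get
\begin{equation*}
c_j \geqslant \inf_{u \in S_j} \Psi(u) = b_j,
\end{equation*}
so $c_j \to +\infty$ as $j \to +\infty$. Also $c_j$ is finite: taking $\gamma = \mathrm{id}$ shows $c_j \leqslant \max_{B_j} \Psi < +\infty$ (by continuity on the compact finite-dimensional ball).

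Next I would invoke a quantitative deformation lemma, valid under the Cerami condition $(h_4)$ and using the evenness of $\Psi$ to ensure an odd deformation $\eta_\varepsilon$. Suppose for contradiction that $c_j$ is not a critical value. Then for $\varepsilon > 0$ small enough (in particular with $\varepsilon < -\max\{a_j, 0\}/2$, using $(h_3)$ to see that $\Psi \leqslant 0$ on $\partial B_j$ so that the deformation is the identity on $\partial B_j$) there exists an odd homeomorphism $\eta = \eta_\varepsilon$ with $\eta|_{\partial B_j} = \mathrm{id}$ and $\eta(\{\Psi \leqslant c_j + \varepsilon\}) \subset \{\Psi \leqslant c_j - \varepsilon\}$. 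Picking $\gamma \in \Gamma_j$ with $\max_{B_j} \Psi(\gamma) < c_j + \varepsilon$, the composition $\eta \circ \gamma$ still lies in $\Gamma_j$ (here one uses that $\gamma|_{\partial B_j} = \mathrm{id}$ together with $\Psi|_{\partial B_j} \leqslant 0 < c_j - \varepsilon$) and achieves a max below $c_j - \varepsilon$, contradicting the definition of $c_j$. Hence $c_j$ is critical and we obtain $u_j \in X$ with $\Psi'(u_j) = 0$ and $\Psi(u_j) = c_j \geqslant b_j \to +\infty$.

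The main obstacle is the intersection lemma: it is the genuinely topological input and its precise proof requires care with the splitting $X = Y_{j-1} \oplus Z_j$ and the Borsuk--Ulam theorem (or equivalently the Krasnoselskii genus of odd-symmetric sets). The rest is a mostly formal minimax-plus-deformation argument, where the only nontrivial analytic ingredient is confirming that the standard deformation lemma remains valid under the weaker Cerami condition $(C)_c$ rather than the usual Palais--Smale condition; this follows from the fact that bounded Cerami sequences still give suitable pseudo-gradient vector fields with a controlled normalization factor $(1 + \|u\|_X)^{-1}$.
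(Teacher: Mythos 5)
The paper does not actually prove Theorem \ref{foun}: it is taken as a black box from the literature, cited as \cite[Theorem 2.9]{shi bo} (the Cerami-condition Fountain Theorem, ultimately a variant of Bartsch's Fountain Theorem in the form popularized by Willem). So there is no in-paper proof to compare against. Judged on its own, your sketch is essentially the standard Willem-style proof: minimax over the odd-symmetric class $\Gamma_j$ of odd maps on $B_j\subset Y_j$ fixing the boundary, the Borsuk--Ulam intersection lemma to show $\gamma(B_j)\cap\{u\in Z_j:\|u\|_X=r_j\}\neq\emptyset$, hence $c_j\geqslant b_j\to+\infty$, and then a Cerami-compatible equivariant deformation to show each $c_j$ is a critical value. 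That is the right skeleton, and your closing remark about the pseudo-gradient normalization $(1+\|u\|_X)^{-1}$ is exactly the reason the Deformation Lemma survives the passage from $(PS)_c$ to $(C)_c$.

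Two points need tightening. First, your phrasing of the intersection lemma (``find a preimage with $\gamma(u)\in Z_j$ in the interior, then normalize'') separates two facts that must be obtained simultaneously: one applies Borsuk--Ulam not on all of $B_j$ but on the boundary $\partial U$ of $U:=\{u\in\operatorname{int}B_j:\|\gamma(u)\|_X<r_j\}$, a symmetric bounded open neighborhood of $0$ in $Y_j$ with $\overline{U}\subset\operatorname{int}B_j$ (this last inclusion uses $\gamma|_{\partial B_j}=\operatorname{id}$ and $\rho_j>r_j$). The map $P\gamma:\partial U\to Y_{j-1}$, with $P$ the projection along $Z_j$, is odd from a $(j-1)$-sphere-like boundary into a space of dimension $j-1$, so it has a zero; at that point one has both $\gamma(u)\in Z_j$ and $\|\gamma(u)\|_X=r_j$ at once. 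Second, the smallness condition ``$\varepsilon<-\max\{a_j,0\}/2$'' is degenerate (it forces $\varepsilon<0$); what you need is $\varepsilon<c_j/2$ so that $a_j\leqslant 0<c_j-2\varepsilon$, which guarantees the deformation leaves $\partial B_j$ fixed and hence preserves membership in $\Gamma_j$. With these repairs the argument is a correct proof of the cited theorem.
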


\begin{definition} Let $X$ be a separable and reflexive Banach space, $\Psi\in C^{1}(X,\mathbb{R}),$ $c\in\mathbb{R}$. We say that $\Psi$ satisfies the $(C)_{c}^{\star}$ condition (with respect to $Y_{k}$), if any sequence $(u_{k})_{k\in\mathbb{N}}\subset X$ for which $u_{k}\in Y_{k},$ for any $k\in\mathbb{N},$ 
$\Psi(u_{k})\to c$ and $\|\Psi'_{|_{Y_{k}}}(u_{k})\|_{X'}(1+\|u_{k}\|_{X})\to 0,$ as $k\to\infty$, contain a subsequence converging to a critical point of $\Psi.$
\end{definition}
 \begin{theorem}\label{dual}
Suppose $(h_1).$ If for each $j\geqslant j_{0}$ there exist $\rho_{j}>r_{j}>0$ such that
\begin{itemize}
\item[$(g_{1})$] $a_{j}=\inf\{\Psi(u):u\in Z_{j},\,\,\|u\|_{X}=\rho_{j}\}\geqslant 0;$
\item[$(g_{2})$] $b_{j}=\sup\{\Psi(u):u\in Y_{j},\,\,\|u\|_{X}=r_{j}\}< 0;$
\item[$(g_{3})$] $d_{j}=\inf\{\Psi(u):u\in Z_{j},\,\,\|u\|_{X}\leqslant \rho_{j}\}\to 0,$ as $j\to+\infty;$
\item[$(g_{4})$] $\Psi$ satisfies the $(C)_{c}^{\star}$ condition for every $c\in [d_{j_{0}},0[.$
\end{itemize}
Then $\Psi$ has a sequence of negative critical values converging to $0$.
\end{theorem}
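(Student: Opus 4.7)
The plan is to combine a Krasnoselskii-genus minimax construction with a Galerkin-type argument that invokes the $(C)_{c}^{\star}$ compactness. The genus machinery will deliver minimax values sandwiched between $d_{k}$ and $b_{k}$, while $(g_{4})$ will promote them to honest critical values of $\Psi$.

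For each $k\geqslant j_{0}$, using the evenness of $\Psi$ from $(h_{1})$, I introduce the admissible family
$$\Sigma_{k}:=\{A\subset \overline{B}_{\rho_{k}}(X)\,:\,A=-A,\;A\text{ closed},\;\gamma(A)\geqslant k\},$$
where $\gamma$ is the Krasnoselskii genus, and define
$$c_{k}:=\inf_{A\in\Sigma_{k}}\,\sup_{u\in A}\Psi(u).$$
The family is nonempty because $S_{k}:=\{u\in Y_{k}:\|u\|_{X}=r_{k}\}$ lies in $\overline{B}_{\rho_{k}}$ (as $r_{k}<\rho_{k}$), is symmetric and closed, and has genus $k$. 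Testing the minimax against $A=S_{k}$ and invoking $(g_{2})$ gives the upper bound $c_{k}\leqslant b_{k}<0$. For the lower bound, I consider the continuous odd linear projection $P_{k-1}\colon X\to Y_{k-1}$; the intersection inequality $\gamma(A\cap\ker P_{k-1})\geqslant\gamma(A)-(k-1)\geqslant 1$ forces $A\cap Z_{k}\cap\overline{B}_{\rho_{k}}\neq\emptyset$ for every $A\in\Sigma_{k}$, so $\sup_{A}\Psi\geqslant d_{k}$ and hence $c_{k}\geqslant d_{k}$. Since $d_{k}\to 0$ by $(g_{3})$ and $c_{k}<0$, it follows that $c_{k}\to 0^{-}$ as $k\to+\infty$.

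It remains to show each $c_{k}$ is a critical value. For fixed $k$ and $n\geqslant k$, introduce the truncated minimax
$$c_{k}^{(n)}:=\inf_{A\in\Sigma_{k},\,A\subset Y_{n}}\,\sup_{u\in A}\Psi(u),$$
and use a classical equivariant deformation inside the finite-dimensional space $Y_{n}$ to produce $u_{n}\in Y_{n}$ with $\Psi(u_{n})=c_{k}^{(n)}$ and $(\Psi|_{Y_{n}})'(u_{n})=0$. A density argument on genus-admissible sets gives $c_{k}^{(n)}\to c_{k}\in[d_{j_{0}},0)$ as $n\to+\infty$, and the Galerkin sequence $(u_{n})$ then satisfies the hypotheses of $(g_{4})$. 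The $(C)_{c_{k}}^{\star}$ condition therefore furnishes a subsequence converging in $X$ to a critical point $u_{k}$ of $\Psi$ with $\Psi(u_{k})=c_{k}<0$, and the resulting family $(u_{k})_{k\geqslant j_{0}}$ is the desired sequence of negative critical values tending to $0$.

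The hard part will be this Galerkin-to-infinite-dimensional passage: one must construct an equivariant deformation lemma compatible with the filtration $(Y_{n})$ so that $(u_{n})$ is produced at the correct energy level $c_{k}^{(n)}$ and is approximately critical in a sense suited to $(C)_{c}^{\star}$, and then carefully take $n\to+\infty$ so that the limit point inherits criticality in all of $X$ (and not merely in some $Y_{n}$). This Galerkin-adapted compactness is precisely what $(g_{4})$ encodes, which is why the ordinary $(C)_{c}$ condition does not suffice here.
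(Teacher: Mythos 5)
The paper does not prove Theorem~\ref{dual}: it is the Dual Fountain Theorem (with Cerami-type compactness), which the authors \emph{cite} to Bartsch--Willem~\cite[Theorem 2]{barst} and then apply directly in the proof of Theorem~\ref{t5}. There is therefore no ``paper's own proof'' to compare against; you are supplying a proof of a tool theorem the paper takes for granted.

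As a blind reconstruction, your sketch has the right global picture (sandwich a minimax value between $d_k$ and $b_k$, then extract a critical point via a Galerkin scheme keyed to $(C)_c^{\star}$), but two central steps are not merely ``hard'' --- as written, they do not go through in the form you propose. First, the genus class $\Sigma_k=\{A\subset\overline{B}_{\rho_k}: A=-A,\ A\ \text{closed},\ \gamma(A)\geqslant k\}$ is \emph{not} invariant under the negative pseudo-gradient deformation in $Y_n$: the flow can push $A$ out of $\overline{B}_{\rho_k}$, and hypothesis $(g_1)$ controls $\Psi$ only on $Z_j\cap\partial B_{\rho_j}$, not on all of $\partial B_{\rho_j}$, so one cannot cut off or retract the flow without potentially destroying the energy decrease at the negative level $c_k$. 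This is exactly why the Bartsch--Willem argument uses the admissible class of odd continuous maps $\gamma:\overline{B}_{\rho_k}\cap Y_k\to Y_n$ with $\gamma=\mathrm{id}$ on the relative boundary, a class that \emph{is} preserved under the truncated flow because the intersection property with $Z_k$ forces the maximum to sit where $(g_1)$ applies. Second, your claim $c_k^{(n)}\to c_k$ needs a genuine argument: since the inner family of genus-$k$ sets contained in $Y_n$ only grows with $n$, one gets $c_k^{(n)}\searrow c_k^{\infty}\geqslant c_k$, and collapsing this to equality would require approximating arbitrary genus-$k$ symmetric subsets of $\overline B_{\rho_k}$ by ones lying in $Y_n$ without losing genus --- a nontrivial approximation fact you neither prove nor cite. (There is also a minor issue that $\Sigma_k$ must exclude sets containing $0$ for $\gamma$ to be defined, but that is easily patched.) In short: the strategy resembles standard dual-fountain arguments, but the two points you flag as ``the hard part'' are precisely where your chosen genus-based setup runs into obstructions that the map-based minimax of~\cite{barst} is designed to avoid, so the proposal is a plan rather than a proof.
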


We conclude this subsection defining the notion of Krasnoselskii  genus and introducing a critical point theorem related to the a new version of the symmetric Mountain Pass Theorem studied by  Kajikiya, see \cite[Theorem 1]{kaji}.

\begin{definition}
   Let $X$ be a real Banach space and $B$ a subset of $X$. $B$ is said to be\textit{ symmetric } if $u\in B$ implies $-u\in B.$ For a closed symmetric set $B$ which does not contain the origin, we define a   \textit{Krasnoselskii genus}  $\gamma(B)$ of $B$  by the smallest integer $j$ such that there exists an odd continuous mapping from $B$ to $\mathbb{R}^{j} \backslash\{0\}.$  If there does not exist such a $j$, we define $\gamma(B)=+\infty.$  Moreover, we set $\gamma(\emptyset)=0.$
   \end{definition}
 
Let us consider the following set,
\begin{equation*}
\Gamma_{j}=\{ B_{j} \subset X:  B_{j} \mbox{ is closed, symmetric and } 0 \notin B_j \mbox{ such that the genus }\gamma(B_{j}) \geqslant j\}
\end{equation*}

 %\begin{theorem}
%   Let $E=\mathbb{R}^N$ and $\partial \Omega$ be the boundary of an     open, symmetric,  and bounded subset $\Omega$ such that 
%   $0\in\Omega.$ Then $\gamma(\partial\Omega)=N.$
%\end{theorem}

%\begin{corollary} 
 %  $\gamma({\mathbb{S}^{N-1}})=N.$
%\end{corollary}

%\begin{proposition} 
   %If $K\in \mathfrak{U},$ $0\notin K,$ and $\gamma(K)\geqslant 2,$ 
 %  the $K$ has infinitely many points.
%\end{proposition}
\begin{theorem}\label{kajikiya}
 Let $X$ be an infinite-dimensional space, $B\in \Gamma_{j}$, and  $\Psi \in C^{1}(X, \mathbb{R})$  satisfying  the following conditions 
\begin{itemize}
 \item[$(I1)$] $\Psi(u)$ is even, bounded from below, $\Psi(0)=0$ and $\Psi(u)$ satisfies the Palais-Smale condition;
\item[$(I2)$] For each $j \in \mathbb{N}$, there exists an $B_{j} \in \Gamma_{j}$ such that $\displaystyle{\sup_{u\in B_{j}}\Psi(u) < 0}$.
 \end{itemize}
 Then either $(R1)$ or $(R2)$ below holds
 \begin{itemize}
 \item[$(R1)$]  There exists a sequence $(u_{j})_{j \in \mathbb{N}}$ such that $\Psi'(u_{j})=0$, $\Psi(u_{j})<0$  and $(u_{j})_{j \in \mathbb{N}}$
converges to zero.
\item[$(R2)$]  There exist two sequences $(u_{j})_{j \in \mathbb{N}}$  and $(v_{j})_{j \in \mathbb{N}}$  such that $\Psi'(u_{j})=0$, $\Psi(u_{j})<0$, $u_j\neq 0$, $\displaystyle{\lim_{j\to +\infty}}u_j=0$,  $\Psi'(v_{j})=0$, $\Psi(u_{j})<0$, $\displaystyle{\lim_{j\to +\infty}}v_j=0$, and
$(v_{j})_{j \in \mathbb{N}}$ converges to a non-zero limit.
\end{itemize}
 \end{theorem}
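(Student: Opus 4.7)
The plan is to adapt the classical Ljusternik--Schnirelman / Krasnoselskii minimax construction to an even functional bounded below. For each $j\in\mathbb{N}$ define the genus-based minimax level
\begin{equation*}
c_{j} := \inf_{A\in \Gamma_{j}}\,\sup_{u\in A}\Psi(u).
\end{equation*}
By (I1), $\Psi$ is bounded below, so $c_{j} > -\infty$; the inclusion $\Gamma_{j+1}\subset\Gamma_{j}$ gives $c_{j}\leqslant c_{j+1}$; and the set $B_{j}$ provided by (I2) forces $c_{j}\leqslant\sup_{B_{j}}\Psi < 0$. Thus $-\infty < c_{1}\leqslant c_{2}\leqslant\cdots < 0$, and I set $c_{\star}:=\lim_{j}c_{j}\in[c_{1},0]$.

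The core step is to promote each $c_{j}$ to a critical value with a multiplicity bound. I would invoke the standard equivariant deformation lemma for even $C^{1}$ functionals under Palais--Smale: for every $c<0$ and every symmetric neighborhood $U$ of $K_{c}:=\{u\in X : \Psi'(u)=0,\ \Psi(u)=c\}$, there exist $\varepsilon>0$ and an odd homeomorphism $\eta$ of $X$ sending $\{\Psi\leqslant c+\varepsilon\}\setminus U$ into $\{\Psi\leqslant c-\varepsilon\}$. Combining this with the monotonicity, subadditivity and continuity properties of the Krasnoselskii genus (and applying $\eta$ to an almost-optimal set in $\Gamma_{j+m-1}$) yields the classical conclusion: each $c_{j}$ is a critical value, and if $c_{j}=c_{j+1}=\cdots=c_{j+m-1}=c$ then $\gamma(K_{c})\geqslant m$. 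Since $c<0=\Psi(0)$ we have $0\notin K_{c}$, and Palais--Smale makes $K_{c}$ compact, hence of finite genus; consequently no negative level is repeated infinitely often and $c_{j}\nearrow c_{\star}=0$. Choosing $w_{j}\in K_{c_{j}}$ produces nonzero critical points with $\Psi(w_{j})=c_{j}<0$ and $\Psi(w_{j})\to 0$, and Palais--Smale yields a subsequence converging to some critical point $u^{\star}$ of $\Psi$ at level zero. I expect this equivariant deformation / excision step to be the principal technical obstacle.

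To conclude the dichotomy I would split on the possible limits of $(w_{j})$. If every subsequential limit of $(w_{j})$ is zero, then after extraction $u_{j}:=w_{j}\to 0$ with $u_{j}\neq 0$ and $\Psi(u_{j})<0$, giving (R1). Otherwise some subsequence of $(w_{j})$ tends to a nonzero critical point, providing the sequence $(v_{j})$ of (R2). To produce the companion zero-convergent sequence $(u_{j})$ of (R2), I would localize the whole construction near the origin: by (I2) together with the odd contraction $u\mapsto tu$ (which preserves closedness, symmetry and, for bounded sets not containing $0$, the genus) one obtains for every $\varepsilon>0$ and every $j$ a set $B_{j}^{\varepsilon}\in\Gamma_{j}$ contained in $\overline{B_{\varepsilon}(0)}$ with $\sup_{B_{j}^{\varepsilon}}\Psi<0$; repeating the minimax / deformation scheme with a pseudo-gradient flow trapped in $B_{2\varepsilon}(0)$ via a radial cut-off delivers critical points inside $B_{2\varepsilon}(0)$ with $\Psi<0$, and a diagonal extraction as $\varepsilon\to 0$ produces the required sequence tending to $0$, completing (R2). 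The delicate point of this last step is arranging the trapped flow so that it still drives $\Psi$ strictly below zero on the relevant sublevel sets, which is made possible precisely by $\Psi(0)=0$ and the strict negativity supplied by (I2).
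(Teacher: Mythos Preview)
The paper does not prove this statement: it is quoted as a known result and attributed to \cite[Theorem 1]{kaji}, so there is no in-paper argument against which to compare.

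Your outline is nonetheless essentially the standard route (and indeed Kajikiya's own): the genus--minimax values $c_{j}=\inf_{A\in\Gamma_{j}}\sup_{A}\Psi$ are finite, nondecreasing and strictly negative; the equivariant deformation lemma gives that each $c_{j}$ is critical, with the usual multiplicity bound $\gamma(K_{c})\geqslant m$ when $m$ consecutive levels coincide, whence $c_{j}\nearrow 0$; and Palais--Smale applied to $w_{j}\in K_{c_{j}}$ produces a subsequential limit at level $0$, on which the dichotomy is run. All of this is correct.

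The one genuine gap is your localization step for the zero-convergent sequence in alternative (R2). Scaling $u\mapsto tu$ preserves genus, but it does \emph{not} preserve $\sup\Psi<0$: hypotheses (I1)--(I2) do not preclude $\Psi>0$ on a punctured neighborhood of the origin (think of $\Psi(u)=\|u\|^{2}-\|u\|^{4}$ near $0$), so $\sup_{tB_{j}}\Psi$ may well be nonnegative for small $t$, and a pseudo-gradient flow trapped in $B_{2\varepsilon}(0)$ need not reach $\{\Psi<0\}$ at all. Incidentally, the paper's transcription of (R2) is garbled (it asserts simultaneously $\lim v_{j}=0$ and that $(v_{j})$ converges to a nonzero limit); in Kajikiya's original the zero-convergent sequence in (R2) sits at level $\Psi=0$, not $\Psi<0$, and is extracted from the structure of the critical set $K_{0}$ rather than by any rescaling trick --- which is exactly why your scaling argument cannot close this case.
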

 \begin{remark} From Theorem \ref{kajikiya} we have a sequence $(u_{j})_{j \in \mathbb{N}}$  of critical points such that $\Psi(u_{j})\leqslant0$, $u_j\neq 0$,  and $\displaystyle{\lim_{j\to +\infty}}u_j=0$.
\end{remark}
 
%\begin{lemma}\label{r1}
%Assume $(a_{1})$-$(a_{3}),$ $(\mathcal{K}),$ and that the condition $(f_{0})-(f_{4})$ are satisfied. There  $\Phi_{\lambda}$ is bounded  from below, is even and satisfies  the $(C)_{c}$ condition.
%\end{lemma}

%\begin{theorem}(Clark's Theorem)\label{clarke}
 % Let $\Psi\in C^{1}(X,\mathbb{R})$ be a functional satisfying the   Palais-Smale condition. Also suppose that:
  %\begin{itemize}
   % \item[(i)] $\Psi$ is bounded from below and even;
    %\item[(ii)]There is a compact set $K\in \mathfrak{U}$ such that 
     % $\gamma(K)=j$ and $\displaystyle{\sup_{x\in K}\Psi(x)<\Psi(0)}.$
  %\end{itemize} 
%Then, $\Psi$ possesses at least $j$ pairs of distinct critical points and their corresponding 
%critical values are less than $\Psi(0).$
%\end{theorem} 

 \section{Proof of Theorems \ref{cc}, \ref{t7},  \ref{fountain}, \ref{t5}, \ref{t6}, and  \ref{genus}}
 
 \subsection{Proof of Theorem \ref{cc}}
  \hfill \break
To prove  Theorem \ref{cc} we need of  Lemmas  below and Theorem \ref{MountainPass}.
 \begin{lemma}\label{lema 1}
Assume $(a_{1})$-$(a_{3}),$ $(\mathcal{K}),$ and $(f_{0})$-$(f_{2})$ are holds. Then we have the following assertions:
\begin{itemize}
\item[(i)] There exists $v\in \mathscr{W},$ $v>0,$ such that $\Psi_{\lambda}(tv)\to -\infty$ as $t\to +\infty;$
\item[(ii)] There exist $r>0$ and $\mathcal{R}>0$ such that $\Psi_{\lambda}(u)\geqslant \mathcal{R}$ for any $u\in \mathscr{W}$ with $\|u\|_{\mathscr{W}}=r.$
\end{itemize}
\end{lemma}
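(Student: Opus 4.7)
\medskip

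\noindent\textbf{Plan of proof.}

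\emph{Part (ii): geometric condition near the origin.}  The strategy is the standard one for superlinear functionals, adapted to the variable-exponent and integrodifferential setting.  First, I would extract a lower bound on $\Phi$.  From $(a_1)$--$(a_2)$, $\mathcal{A}(\tau)\geqslant c_{\mathcal{A}}\tau^{p(x,y)-1}$ for $\tau>0$, hence
\[
\mathscr{A}(t)=\int_{0}^{|t|}\mathcal{A}(\tau)\,d\tau\;\geqslant\;\frac{c_{\mathcal{A}}}{p^{+}}|t|^{p(x,y)},
\]
and combining with $(\mathcal{K})$ gives $\Phi(u)\geqslant \frac{c_{\mathcal{A}}b_{0}}{p^{+}}\rho_{\mathscr{W}}(u)$.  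For $\|u\|_{\mathscr{W}}\leqslant 1$, Proposition~\ref{lw0}\,(c) yields $\rho_{\mathscr{W}}(u)\geqslant \|u\|_{\mathscr{W}}^{p^{+}}$.  Next I would bound $F$: conditions $(f_{0})$ and $(f_{2})$ imply that for every $\varepsilon>0$ there is $C_{\varepsilon}>0$ with
\[
|F(x,t)|\;\leqslant\;\varepsilon|t|^{p^{+}}+C_{\varepsilon}|t|^{\vartheta(x)}\qquad\text{for all }(x,t)\in\Omega\times\mathbb{R}.
\]
The compact embeddings $\mathscr{W}\hookrightarrow L^{p^{+}}(\Omega)$ and $\mathscr{W}\hookrightarrow L^{\vartheta(\cdot)}(\Omega)$ (Lemma~\ref{2.11}, using $p^{+}<\underline{\vartheta}^{-}\leqslant\vartheta(x)<p^{\star}_{s}(x)$) together with Proposition~\ref{masmenos} give, for $\|u\|_{\mathscr{W}}$ sufficiently small,
\[
\int_{\Omega}|F(x,u)|\,dx\;\leqslant\;\varepsilon C_{1}\|u\|_{\mathscr{W}}^{p^{+}}+C_{\varepsilon}C_{2}\|u\|_{\mathscr{W}}^{\underline{\vartheta}^{-}}.
\]
Picking $\varepsilon$ so small that $\lambda\varepsilon C_{1}<\frac{c_{\mathcal{A}}b_{0}}{2p^{+}}$ yields
\[
\Psi_{\lambda}(u)\;\geqslant\;\frac{c_{\mathcal{A}}b_{0}}{2p^{+}}\|u\|_{\mathscr{W}}^{p^{+}}-\lambda C_{\varepsilon}C_{2}\|u\|_{\mathscr{W}}^{\underline{\vartheta}^{-}}.
\]
Since $\underline{\vartheta}^{-}>p^{+}$, the first term dominates for $\|u\|_{\mathscr{W}}=r$ small enough, so $\Psi_{\lambda}(u)\geqslant\mathcal{R}>0$ on the sphere of radius $r$.

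\emph{Part (i): existence of a direction along which $\Psi_{\lambda}\to -\infty$.}  Here I would combine an upper bound on $\Phi$ with the $p^{+}$-superlinearity $(f_{1})$.  From $(a_{2})$ one has $\mathscr{A}(t)\leqslant \frac{C_{\mathcal{A}}}{p^{-}}|t|^{p(x,y)}$, and for any fixed $v\in\mathscr{W}\setminus\{0\}$ with $v>0$ and any $t\geqslant 1$,
\[
\Phi(tv)\;\leqslant\;\frac{C_{\mathcal{A}}b_{1}}{p^{-}}\,t^{p^{+}}\rho_{\mathscr{W}}(v).
\]
Condition $(f_{1})$, together with the growth bound $(f_{0})$, implies the standard consequence
\[
F(x,t)\;\geqslant\;M|t|^{p^{+}}-C_{M}\qquad\text{for a.e. }x\in\Omega,\ \forall\,t\in\mathbb{R},
\]
where $M>0$ can be made arbitrarily large.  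Choose a positive $v\in C_{c}^{\infty}(\Omega)\subset\mathscr{W}$ (hence $\int_{\Omega}|v|^{p^{+}}\,dx>0$) and pick $M$ so large that
\[
\lambda M\int_{\Omega}|v|^{p^{+}}\,dx\;>\;\frac{C_{\mathcal{A}}b_{1}}{p^{-}}\rho_{\mathscr{W}}(v).
\]
Then
\[
\Psi_{\lambda}(tv)\;\leqslant\;\Big(\tfrac{C_{\mathcal{A}}b_{1}}{p^{-}}\rho_{\mathscr{W}}(v)-\lambda M\int_{\Omega}|v|^{p^{+}}\,dx\Big)t^{p^{+}}+\lambda C_{M}|\Omega|\;\longrightarrow\;-\infty
\]
as $t\to+\infty$, establishing (i).

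\emph{Main difficulties.}  The delicate point, as is typical in the variable-exponent framework, is handling the mismatch between the modular $\rho_{\mathscr{W}}$ and the norm $\|\cdot\|_{\mathscr{W}}$: in part (ii) I must stay in the small-norm regime of Proposition~\ref{lw0}\,(c), and in part (i) I rely on $t\geqslant 1$ so that $t^{p(x,y)}\leqslant t^{p^{+}}$ pointwise.  The rest is routine but the crucial ingredient which makes the coefficient comparison in (i) succeed is that $(f_{1})$ allows $M$ to be chosen arbitrarily large, thereby beating the fixed constant $\frac{C_{\mathcal{A}}b_{1}}{p^{-}}$ coming from $(a_{2})$ and $(\mathcal{K})$.
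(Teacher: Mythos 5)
Your proposal is correct and follows essentially the same route as the paper: the $\varepsilon$-splitting of $F$ via $(f_{0})$--$(f_{2})$ combined with the Sobolev embeddings for part (ii), and the lower bound $F(x,t)\geqslant M|t|^{p^{+}}-C_{M}$ (with $M$ arbitrarily large) from $(f_{1})$ to beat the coefficient $\tfrac{C_{\mathcal{A}}b_{1}}{p^{-}}$ in the modular estimate of $\Phi$ for part (i). The only minor variation is that you derive $\mathscr{A}(t)\geqslant\tfrac{c_{\mathcal{A}}}{p^{+}}|t|^{p(x,y)}$ directly by integrating the pointwise bound from $(a_{2})$ rather than invoking $(a_{3})$, which is a slight economy in hypotheses but does not change the argument.
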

 \begin{proof}

(i) From $(f_{1})$, it follows that for any $\mathscr{C}>0$ there exists a constant $
c_{\mathscr{C}}>0$ such that
\begin{equation}\label{1l1}
F(x,t)\geqslant \mathscr{C}|t|^{p^{+}}-c_{\mathscr{C}}\,\,\mbox{ for all } \hspace{0.1cm} (x,t)\in   \Omega \times \mathbb{R}.
\end{equation}
Take $v \in \mathscr{W}$ with $v>0,$ for $t>1$, by (\ref{1l1}), $(a_{1})$, $(a_{2})$, and $(\mathcal{K})$, we obtain
\begin{equation}\label{d1}
\begin{split}
\Psi_{\lambda}(tv)\leqslant  t^{p^{+}}\bigg[  \frac{C_{\mathcal{A}}b_1}{p^{-}} \int_{\mathbb{R}^{N}\times \mathbb{R}^{N}}\frac{|v(x)-v(y)|^{p(x,y)}}{|x-y|^{N+sp(x,y)}}\,dx\,dy -\lambda \mathscr{C}\int_{\Omega}v^{p^{+}}\,dx\bigg] + \lambda c_{\mathscr{C}}|\Omega|
\end{split}
\end{equation}
 where $|\Omega|$ denotes the Lebesgue measure of $\Omega.$ Hence, from \eqref{d1} and taking $\mathscr{C}$ large enough such that
$$ \frac{C_{\mathcal{A}}b_1}{p^{-}} \int_{\mathbb{R}^{N}\times \mathbb{R}^{N}}\frac{|v(x)-v(y)|^{p(x,y)}}{|x-y|^{N+sp(x,y)}}\,dx\,dy -\lambda \mathscr{C}\int_{\Omega}v^{p^{+}}\,dx <0,$$
we have
$$\lim_{t\to+\infty}\Psi_{\lambda}(tv)=-\infty,$$
which completes the proof of $(i).$
\\
\noindent(ii)First,  since the embeddings $\mathscr{W} \hookrightarrow L^{p^{+}}(\Omega)$ and $\mathscr{W} \hookrightarrow L^{\vartheta(x)}(\Omega)$ are continuous (Lemma \ref{2.11}), there exist positive constants $c_{2}, c_{3}$, such that
\begin{equation}\label{3}
\|u\|_{L^{p^{+}}(\Omega)}\leqslant c_{2}\|u\|_{\mathscr{W}},\,\,\|u\|_{L^{\vartheta(\cdot)}(\Omega)}\leqslant c_{3}\|u\|_{\mathscr{W}}\,\,\mbox{ for all } \hspace{0.1cm} u\in \mathscr{W}.
\end{equation}
Now, let $\displaystyle{0<\varepsilon<\frac{c_{\mathcal{A}}b_{0}}{\lambda c_{2}^{p^{+}}}}$. 
From $(f_{0})$ and $(f_{2}),$ it follows that for all given $\varepsilon>0,$ there exists $c_{\varepsilon}>0,$ such that
\begin{equation}\label{4}
F(x,t)\leqslant \frac{\varepsilon}{p^{+}}|t|^{p^{+}}+c_{\varepsilon}|t|^{\vartheta(x)} \, \, \mbox{ for all } \hspace{0.1cm}(x,t)\in \Omega\times \mathbb{R}.
\end{equation}
Now,
Thus, for $u\in \mathscr{W}$ with $\|u\|_{\mathscr{W}}<1$ sufficiently small, from   $(a_{2}),$ $(a_{3}),$ $(\mathcal{K}),$ \eqref{3}, and \eqref{4},  we obtain
\begin{equation}\label{5}
\begin{split}
\Psi_{\lambda}(u)  \geqslant \frac{\|u\|^{p^{+}}_{\mathscr{W}}}{p^{+}}\left(c_{\mathcal{A}}b_{0}-\lambda\varepsilon c_{2}^{p^{+}}\right)-\lambda c_{\varepsilon}c_{3}^{\underline{\vartheta}^{-}}\|u\|^{\underline{\vartheta}^{-}}_{\mathscr{W}}.
\end{split}
\end{equation}
Therefore, since $\underline{\vartheta}^{-}>p^{+}$ from $(\ref{5})$ we can choose $\mathcal{R}>0$ and $r>0$ such that $\Psi_{\lambda}(u)\geqslant \mathcal{R}>0$ for every $u\in \mathscr{W}$ and $\|u\|_{\mathscr{W}}=r.$ This completes the proof of $(ii).$ 
\end{proof}
 
\begin{lemma}\label{cerami}
Assume that the condition $(a_{1})$-$(a_{4}),\,\,(\mathcal{K})$, and $f$ satisfies $(f_{0})$, $(f_{1})$ and $(f_{3})$. Then the functional $\Psi_{\lambda}$ satisfies the $(C)_{c}$ condition at any level $c>0.$
\end{lemma}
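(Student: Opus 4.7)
The plan is the classical three-step Cerami argument: first show that any Cerami sequence $(u_k)_{k\in\mathbb{N}}\subset\mathscr{W}$ at level $c>0$ is bounded; then extract a weakly convergent subsequence $u_k\rightharpoonup u$ using reflexivity of $\mathscr{W}$; finally, upgrade weak to strong convergence by verifying the hypothesis of the $(S_+)$ property in Lemma \ref{ll1}(iii). Of these, the boundedness step is where $(a_4)$ and $(f_3)$ are essential, and I expect it to be the main obstacle, since the absence of the Ambrosetti--Rabinowitz condition rules out the direct combination $\Psi_\lambda(u_k)-\mu^{-1}\langle\Psi'_\lambda(u_k),u_k\rangle$.

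For the boundedness, I would argue by contradiction: assume $\|u_k\|_{\mathscr{W}}\to+\infty$ and renormalize $w_k:=u_k/\|u_k\|_{\mathscr{W}}$, so $\|w_k\|_{\mathscr{W}}=1$ and up to a subsequence $w_k\rightharpoonup w$ in $\mathscr{W}$ and $w_k\to w$ in $L^{\vartheta(\cdot)}(\Omega)$ and a.e.\ in $\Omega$ by Lemma \ref{2.11}. If $w\not\equiv 0$, then on the set $\{w\neq 0\}$ one has $|u_k|\to+\infty$, and $(f_1)$ together with Fatou's lemma forces $\lambda\int_\Omega F(x,u_k)/\|u_k\|_{\mathscr{W}}^{p^+}\,dx\to+\infty$. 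Since $(a_2)$ and $(\mathcal{K})$ give the upper bound $\Phi(u_k)\leqslant (C_{\mathcal{A}}b_1/p^-)\rho_{\mathscr{W}}(u_k)\leqslant (C_{\mathcal{A}}b_1/p^-)\|u_k\|_{\mathscr{W}}^{p^+}$ for $\|u_k\|_{\mathscr{W}}\geqslant 1$ by Proposition \ref{lw0}, dividing $\Psi_\lambda(u_k)\to c$ by $\|u_k\|_{\mathscr{W}}^{p^+}$ yields a contradiction between a vanishing left-hand side and a right-hand side diverging to $-\infty$.

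The delicate vanishing case $w\equiv 0$ is handled by the Jeanjean-type truncation. Pick $t_k\in[0,1]$ with $\Psi_\lambda(t_k u_k)=\max_{t\in[0,1]}\Psi_\lambda(tu_k)$. For any fixed $R>1$ set $v_k:=Rw_k$; eventually $R/\|u_k\|_{\mathscr{W}}\in[0,1]$ so $v_k=(R/\|u_k\|_{\mathscr{W}})u_k$ is admissible in the maximization, and since $v_k\to 0$ in $L^{\vartheta(\cdot)}(\Omega)$ the subcritical bound coming from $(f_0)$ gives $\lambda\int_\Omega F(x,v_k)\,dx\to 0$, while $(a_2)$ and Proposition \ref{lw0} give $\Phi(v_k)\geqslant (c_{\mathcal{A}}b_0/p^+)R^{p^-}$. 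Hence $\Psi_\lambda(t_ku_k)\geqslant (c_{\mathcal{A}}b_0/p^+)R^{p^-}-o(1)$, and as $R$ is arbitrary, $\Psi_\lambda(t_ku_k)\to+\infty$. Because $\Psi_\lambda(0)=0$ and $\Psi_\lambda(u_k)\to c$, we have $t_k\in(0,1)$ and hence $\langle\Psi'_\lambda(t_ku_k),t_ku_k\rangle=0$. Rewriting
\begin{equation*}
p^+\Psi_\lambda(t_ku_k)=\int_{\mathbb{R}^N\times\mathbb{R}^N}\mathcal{H}\bigl(t_k(u_k(x)-u_k(y))\bigr)K(x,y)\,dx\,dy+\lambda\int_\Omega \mathcal{G}(x,t_ku_k)\,dx,
\end{equation*}
and applying $(a_4)$ pointwise to the first integrand and $(f_3)$ to $\mathcal{G}$, the right-hand side is bounded above by $p^+\Psi_\lambda(u_k)-\langle\Psi'_\lambda(u_k),u_k\rangle+\lambda\mathsf{c}_\star|\Omega|$, which is bounded by the Cerami condition. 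This contradicts $\Psi_\lambda(t_ku_k)\to+\infty$ and proves boundedness.

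Once boundedness is in hand, extract $u_k\rightharpoonup u$ in $\mathscr{W}$, and by Lemma \ref{2.11}, $u_k\to u$ strongly in $L^{\vartheta(\cdot)}(\Omega)$. From $(f_0)$ and the variable-exponent Hölder inequality applied to $|f(x,u_k)||u_k-u|$ with exponents $\vartheta'(\cdot)$ and $\vartheta(\cdot)$, using that $(|u_k|^{\vartheta(\cdot)-1})$ is bounded in $L^{\vartheta'(\cdot)}(\Omega)$, one obtains $\lambda\int_\Omega f(x,u_k)(u_k-u)\,dx\to 0$. Since also $\langle\Psi'_\lambda(u_k),u_k-u\rangle\to 0$ (from $\|\Psi'_\lambda(u_k)\|_{\mathscr{W}'}(1+\|u_k\|_{\mathscr{W}})\to 0$ and the boundedness of $\|u_k-u\|_{\mathscr{W}}$), it follows that $\langle\Phi'(u_k),u_k-u\rangle\to 0$. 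The $(S_+)$ property of Lemma \ref{ll1}(iii) then yields $u_k\to u$ strongly in $\mathscr{W}$, establishing the $(C)_c$ condition.
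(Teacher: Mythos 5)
Your argument follows the same Jeanjean-type scheme as the paper's proof, step for step: normalize $w_k=u_k/\|u_k\|_{\mathscr{W}}$, show the weak limit must vanish, pick $t_k$ maximizing $\Psi_\lambda(tu_k)$ on $[0,1]$, prove $\Psi_\lambda(t_ku_k)\to+\infty$ from the $L^{\vartheta(\cdot)}$-vanishing of $Rw_k$, and then bound $\Psi_\lambda(t_ku_k)$ above via $(a_4)$ and $(f_3)$ to reach a contradiction, finishing with the $(S_+)$ property. The one place you should be more explicit is the Fatou step in the case $w\not\equiv 0$: to conclude $\int_\Omega F(x,u_k)/\|u_k\|_{\mathscr{W}}^{p^+}\,dx\to+\infty$ from the pointwise blow-up on $\{w\neq 0\}$, Fatou's lemma needs the integrand to be bounded below uniformly in $k$, and the paper supplies this by first observing that $(f_1)$ together with continuity of $F$ on $\overline{\Omega}\times[-D,D]$ forces $F(x,t)\geqslant c_6$ for all $(x,t)\in\overline{\Omega}\times\mathbb{R}$, so that $F(x,u_k)/\|u_k\|_{\mathscr{W}}^{p^+}-c_6/\|u_k\|_{\mathscr{W}}^{p^+}\geqslant 0$; insert that observation before invoking Fatou. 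Your observation that $t_k\in(0,1)$ strictly for $k$ large (so $\langle\Psi'_\lambda(t_ku_k),t_ku_k\rangle=0$ exactly) is a small tidying of the paper's handling of the $t_k=1$ case and is fine.
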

\begin{proof}
Let $c\in \mathbb{R}$ and $(u_{k})_{k\in\mathbb{N}}\subset \mathscr{W}$ be a $(C)_{c}$ sequence for $\Psi_{\lambda},$ that is,
\begin{equation}\label{6}
   \Psi_{\lambda}(u_{k})\to c>0 \mbox{ and } 
   \|\Psi'_{\lambda}(u_{k})\|_{\mathscr{W}'}(1+\|u_{k}\|_{\mathscr{W}})\to 0 \mbox{ as } k\to+\infty.
\end{equation}
Initially, we prove that the sequence $(u_{k})_{k\in\mathbb{N}}$ is bounded in $\mathscr{W}.$ Indeed, arguing by contradiction, up to a subsequence, still denoted by $(u_{k})_{k\in\mathbb{N}},$ we suppose that $(u_k)_{k\in\mathbb{N}}$ is unbounded in $\mathscr{W}.$
Define $\displaystyle{\omega_{k}:=\frac{u_{k}}{\|u_{k}\|_{\mathscr{W}}}}$ for all $k\in \mathbb{N},$ then $(\omega_{k})_{k\in\mathbb{N}}\subset \mathscr{W}$ and $\|\omega_{k}\|_{\mathscr{W}}=1.$ Thus, we can extract a subsequence, still denoted by $(\omega_{k})_{k\in\mathbb{N}}$ and $\omega\in \mathscr{W}$ such that $\omega_{k} \rightharpoonup \omega$  in $\mathscr{W}$ $ \mbox{ as } k\to+\infty$. From   Lemma \ref{2.11}, it follows that 
\begin{eqnarray}\label{conver1}
\omega_{k}(x)\to \omega(x) \mbox{ a.e. }  x\in  \Omega, \hspace{0.1cm} \omega_{k}\to \omega \mbox{ in } L^{p^{+}}(\Omega), \hspace{0.1cm} \mbox{ and }\omega_{k}\to \omega \mbox{ in }  L^{\vartheta(\cdot)}(\Omega)  \mbox{ as } k\to+\infty.
\end{eqnarray}
We consider $\Omega_{\star}:=\{x\in \Omega:\omega(x)\neq 0\}.$ If $x\in \Omega_{\star},$ by $(\ref{conver1}),$ we have
\begin{equation*}\label{7}
|u_{k}(x)|=|\omega_{k}(x)|\|u_{k}\|_{\mathscr{W}}\to +\infty \mbox{ a.e. } x\in  \Omega_{\star} \mbox{ as } k\to +\infty.
\end{equation*}
Therefore, by $(f_{1}),$ we obtain for each $x\in \Omega_{\star}$   
\begin{equation}\label{8}
  \lim_{k\to+\infty}\frac{F(x,u_{k})}{|u_{k}|^{p^{+}}}
  \frac{|u_{k}|^{p^{+}}}{\|u_{k}\|^{p^{+}}_{\mathscr{W}}}=\lim_{k\to +\infty}\frac{F(x,u_{k})}{|u_{k}|^{p^{+}}}|\omega_{k}|^{p^{+}}=+\infty.
\end{equation}
Also, by $(f_{1})$ there exists $D>0$ such that
\begin{equation}\label{9}
\frac{F(x,t)}{|t|^{p^{+}}}>1\,\, \mbox{ for all } \hspace{0.1cm}(x,t)\in \Omega\times\mathbb{R} \mbox{ with } |t|\geq D.
\end{equation}
 Since $F(x,t)$ is continuous on $\overline{\Omega}\times [-D,D],$ there exists a positive constant $c_{5}$ such that
\begin{equation}\label{10}
|F(x,t)|\leqslant c_{5}\,\, \mbox{ for all } \hspace{0.1cm} (x,t)\in \overline{\Omega}\times  [-D,D].
\end{equation}
 Hence, by $(\ref{9})$ and $(\ref{10}),$ we conclude that  there is a constant $c_{6}$ such that
\begin{equation*}
F(x,t)\geqslant c_{6}\,\,\mbox{ for all } \hspace{0.1cm}(x,t)\in \overline{\Omega}\times \mathbb{R},
\end{equation*}
which shows that
$$\frac{F(x,u_{k})-c_{6}}{\|u_{k}\|_{\mathscr{W}}^{p^{+}}}\geqslant 0 \,\,\mbox{ for all  } x\in \Omega\mbox{ and  } \ k\in \mathbb{N},$$
that is,
\begin{equation}\label{12}
\frac{F(x,u_{k})}{|u_{k}|^{p^{+}}}|\omega_{k}|^{p^{+}}-\frac{c_{6}}{\|u_{k}\|_{\mathscr{W}}^{p^{+}}}\geqslant 0 \,\,\mbox{ for all  } x\in \Omega \mbox{ and  }  k\in \mathbb{N}. 
\end{equation}
Now, by $(\ref{6})$, $(a_{2})$, $(a_{3})$, and $(\mathcal{K})$, we have
\begin{equation*}
\begin{split}
c    \geqslant& \frac{c_{\mathcal{A}}b_0 }{p^{+}}\int_{\mathbb{R}^{N}\times \mathbb{R}^{N}}\frac{|u_{k}(x)-u_{k}(y)|^{p(x,y)}}{|x-y|^{N+sp(x,y)}}\,dx\,dy - \lambda\int_{\Omega}F(x,u_{k})\,dx+o_{k}(1).
\end{split}
\end{equation*}
Then,
\begin{equation}\label{133}
\int_{\Omega}F(x,u_{k})\,dx \geqslant \frac{c_{\mathcal{A}}b_{0}}{\lambda p^{+}}\|u_{k}\|^{p^{-}}_{\mathscr{W}}-\frac{c}{\lambda}+\frac{o_{k}(1)}{\lambda}\to+\infty 
\mbox{ as } k\to+\infty.
\end{equation}
On the other hand, from $(a_{1})$,  $(a_{2})$, and $(\mathcal{K})$,  we also that
\begin{equation*}
\begin{split}
c   \leqslant & \frac{C_{\mathcal{A}}b_1 }{p^{-}}\|u_k\|^{p^{+}}_{\mathscr{W}}- \lambda\int_{\Omega}F(x,u_{k})\,dx+o_{k}(1)  \mbox{ for all  } k \in \mathbb{R}.
\end{split}
\end{equation*} 
Consequently, by  $(\ref{133}),$ we achieve
\begin{equation}\label{14}
\|u_{k}\|^{p^{+}}_{\mathscr{W}}\geqslant \frac{cp^{-}}{C_{\mathcal{A}}b_1}+\frac{\lambda p^{-}}{C_{\mathcal{A}}b_1}\int_{\Omega}F(x,u_{k})\,dx- \frac{p^{-}}{C_{\mathcal{A}}b_1}o_{k}(1)>0,
\end{equation}
for $k$ large enough.

\noindent We claim that $|\Omega_{\star}|=0.$ Indeed, if $|\Omega_{\star}|\neq 0,$ then by $(\ref{8}),$ $(\ref{12}),$ $(\ref{14}),$ 
and Fatou's Lemma, we have
\begin{equation}\label{155}
\begin{split}
+\infty&=\int_{\Omega_{\star}}\liminf_{k\to + \infty}\frac{F(x,u_{k})}{|u_{k}|^{p^{+}}}|\omega_{k}(x)|^{p^{+}}\,dx-\int_{\Omega_{\star}}\limsup_{k\to+\infty}\frac{c_{6}}{\|u_{k}\|^{p^{+}}_{\mathscr{W}}}\,dx\\ &\leqslant \liminf_{k\to+\infty}\frac{\displaystyle{\int_{\Omega}F(x,u_{k})\,dx}}{\frac{\lambda p^{-}}{C_{\mathcal{A}}b_{1}}\displaystyle{\int_{\Omega}F(x,u_{k})\,dx}-o_{k}(1)}\cdot
\end{split}
\end{equation}

\noindent Therefore, from \eqref{14} and \eqref{155}, we obtain that $+\infty\leqslant \frac{C_{\mathcal{A}}b_1}{\lambda p^{-}}$,
which is a contradiction. This proves that $|\Omega_{\star}|=0$ and thus $\omega(x)=0$ a.e. $x\in  \Omega.$

\noindent Now as in \cite{jeanjean}, we define the continuous function $B_{k}:[0,1]\longrightarrow \mathbb{R}$ by $B_{k}(t):=\Psi_{\lambda}(t u_{k}).$   
Since $B_{k}(t):=\Psi_{\lambda}(tu_{k})$ is continuous in $[0,1],$ we can say that for each $k\in\mathbb{N}$ 
there exists $t_{k}\in [0,1]$ such that
\begin{equation}\label{16}
\Psi_{\lambda}(t_{k}u_{k}):=\max_{t\in[0,1]}B_{k}(t).
\end{equation}
(If for $k\in \mathbb{N}$ is not unique we choose the smaller possible value).
Note that  $t_{k}>0$ for all $k\in\mathbb{N}.$ 
Indeed, passing to a subsequence if necessary, we have $\displaystyle{\Psi_\lambda(u_k)\geq \dfrac{c}{2}}$ for all $k\in\mathbb{N}.$ So, 
if $t_{k}=0$ for all $k\in\mathbb{N}$ it follows that
\begin{equation}\label{estrelaa1}
\Psi_{\lambda}(t_{k}u_{k})=\Psi_{\lambda}(0)=0,
\end{equation}
however,
\begin{equation}\label{estrelaa2}
\ 0<\dfrac{c}{2}\leqslant  \Psi_\lambda(u_k)\leqslant 
    \max_{t\in[0,1]}\Psi_{\lambda}(t u_{k})=\Psi_{\lambda}(t_{k}u_{k}).
\end{equation}
Thus, from \eqref{estrelaa1} and \eqref{estrelaa2}, we obtain a contradiction.

\noindent If $t_{k}\in (0,1),$ by \eqref{16}, we infer  that 
  $$ 
      \frac{d}{dt}_{\mid_{t=t_{k}}}\Psi_{\lambda}(tu_{k})=0\,\, \mbox{ for all } \hspace{0.1cm} k\in \mathbb{N}.
  $$
 Moreover, if $t_{k}=1,$  by \eqref{6} we have $\langle \Psi_{\lambda}'(u_{k}), u_{k}\rangle=o_{k}(1).$
 So we always have
\begin{equation*}
\langle \Psi_{\lambda}'(t_{k}u_{k}),t_{k}u_{k}\rangle= t_{k}\frac{d}{dt}_{\mid_{t=t_{k}}}\Psi_{\lambda}(tu_{k})=o_{k}(1).
\end{equation*} 

\noindent Let $(r_{j})_{j \in \mathbb{N}}$ be a positive sequence of real numbers such that $r_{j}>1$ and $\displaystyle{\lim_{j\to+\infty}r_{j}=+\infty.}$ 
Then $\|r_{j}\omega_{k}\|_{\mathscr{W}}=r_{j}>1$ for all $j$ and $k \in \mathbb{N}.$ Fix $j \in \mathbb{N},$ since $\omega_{k}\to 0$  in $L^{\vartheta(\cdot)}(\Omega)$ 
and $\omega_{k}(x)\to 0$ a.e. $x\in \Omega,$ as $k\to +\infty,$  using the condition $(f_{0}),$ there exists a positive constant $c_{7}$ such that
\begin{equation}\label{f11}
\Big|F(x,r_{j}\omega_{k})\Big|\leqslant c_{7}\left(r_{j}|\omega_{k}(x)| +r_{j}^{\vartheta(x)}|\omega_{k}(x)|^{\vartheta(x)}\right)
\end{equation}
and  by  continuity of the function $F,$ we achieve
\begin{equation}\label{f33}
F(x,r_{j}\omega_{k})\to F(x,r_{j}\omega)=0 \mbox{ a.e. }x\in \Omega \mbox{ as } k\to +\infty,
\end{equation}
for each $j\in\mathbb{N}.$
Consequently, from \eqref{f11}, \eqref{f33}, and the Dominated Convergence Theorem, we obtain
\begin{equation}\label{18'}
\lim_{k\to +\infty}\int_{\Omega}F(x, r_{j}\omega_{k})\,dx=0
\end{equation} 
for all $j\in \mathbb{N}.$

\noindent  Since $\|u_{k}\|_{\mathscr{W}}\to +\infty$ as $k\to +\infty,$ we also  have either  $\|u_{k}\|_{\mathscr{W}}>r_{j}$ or $\frac{r_{j}}{\|u_{k}\|_{\mathscr{W}}}\in (0,1)$ for $k$ large enough. Thus, using \eqref{16}, \eqref{18'}, $(a_{2})$, $(a_{3})$,  $(\mathcal{K})$, and Proposition \ref{lw0}, we deduce that
\begin{equation}\label{19}
\begin{split}
\Psi_{\lambda}(t_{k}u_{k})\geqslant \Psi_{\lambda}\left(\frac{r_{j}}{\|u_{k}\|_{\mathscr{W}}}u_{k}\right)=\Psi_{\lambda}(r_{j}\omega_{k})\geqslant \frac{c_{\mathcal{A}}b_0 r_{j}^{p^{-}}}{p^{+}}-\lambda \int_{\Omega}F(x,r_{j}\omega_{k})\,dx 
\end{split}
\end{equation}
$\mbox{ for all } k  \mbox{ large enough }.$

\noindent Therefore, by $\eqref{19}$ letting $k,j\to +\infty$, we conclude
\begin{equation}\label{20}
\limsup_{k\to+\infty}\Psi_{\lambda}(t_{k}u_{k})=+\infty.
\end{equation} 
Now, we affirm that
$\displaystyle{\limsup_{k\to+\infty}\Psi_{\lambda}(t_{k}u_{k})\leqslant \delta},$
for a suitable positive constant $\delta.$
Indeed, from 
 $(a_4),$ $(f_{3}),$ $\eqref{6},$ and for all $k$ large enough, we have
\begin{equation*}
\begin{split}
\Psi_{\lambda}(t_{k}u_{k}) =& \Psi_{\lambda}(t_{k}u_{k})-\frac{1}{p^{+}}\langle\Psi_{\lambda}'((t_{k}u_{k})),t_{k}u_{k} \rangle+o_{k}(1) \\ =& \frac{1}{p^{+}}\mathcal{H}(t_k u_k) + \frac{\lambda}{p^{+}}\int_{\Omega}\mathcal{G}(x, t_{k}u_{k})\,dx+o_{k}(1) \\ \leqslant & \frac{1}{p^{+}}\int_{\mathbb{R}^{N}\times \mathbb{R}^{N}}\mathcal{H}(u_k)K(x,y)\,dx\,dy + \frac{\lambda}{p^{+}}\int_{\Omega}(\mathcal{G}(x, u_{k})+c_{\star})\,dx+o_{k}(1) \\ =& \Psi(u_k)- \frac{1}{p^{+}}\langle\Psi_{\lambda}'(u_{k}),u_{k} \rangle+ \frac{\lambda c_{\star}|\Omega|}{p^{+}}+ o_{k}(1). 
\end{split}
\end{equation*}
Then,
\begin{equation}\label{255}
\Psi_{\lambda}(t_{k}u_{k}) \longrightarrow c
+\dfrac{\lambda \mathsf{c}_{\star}}{p^{+}}|\Omega|
\mbox{ as } k\longrightarrow +\infty.
\end{equation}
Consequently, from  \eqref{20} and \eqref{255} we obtain a contradiction. Therefore, the sequence $(u_{k})_{k\in\mathbb{N}}$ is bounded in $\mathscr{W}.$

\noindent Now, with standard arguments, we prove that any $(C)_{c}$ sequence has a convergent subsequence. Since  $\mathscr{W}$ is a reflexive  Banach space  there exists $u\in \mathscr{W}$ such that, up to a subsequence still denoted by $(u_{k})_{k\in\mathbb{N}}$ we obtain that  $u_{k}\rightharpoonup u $ in $ \mathscr{W}$
and by Lemma \ref{2.11}, we achieve
\begin{equation*}
\begin{array}{c}
u_{k}(x)\to u(x) \mbox{ a.e. } x\in \Omega, \;\;
u_{k}\to u \mbox{ in } L^{\vartheta(\cdot)}(\Omega),\mbox{ and } u_{k}\to u \mbox{ in } L^{\mathsf{m}(\cdot)}(\Omega)\mbox{ as } k \to +\infty.
\end{array} 
\end{equation*}
Hence, using the H\"older's inequality, we have

\begin{equation}\label{377}
\Big|\int_{\Omega}f(x,u_{k})(u_{k}-u)\,dx \Big|
\leqslant C\|1+|u_{k}|^{\vartheta(x)-1}\|_{L^{\vartheta'(\cdot)}(\Omega)}
\|u_{k}-u\|_{L^{\vartheta(\cdot)}(\Omega)}\to 0 \mbox{ as } k\to+\infty.
\end{equation} 
and from \eqref{6}, it follows that 
\begin{equation}\label{606}
\langle\Psi'_{\lambda}(u_{k}),u_{k}-u \rangle\to 0 \mbox{  as } k\to +\infty.
\end{equation}
Thus,  using \eqref{377} and \eqref{606},  we get 
$$\langle\Phi'(u_{k}),u_{k}-u \rangle=\lambda \int_{\Omega}f(x,u_{k})(u_{k}-u)\,dx+\langle\Psi'_{\lambda}(u_{k}),u_{k}-u \rangle\to 0\,\,as\,\,k\to +\infty.$$
Thus, since the operator $\Phi'$ is of type $(S_{+})$ (see Lemma \ref{ll1}), we conclude that $u_{k}\to u$  in $\mathscr{W}.$ 
Therefore, this proves that $\Psi_{\lambda}$ satisfies the $(C)_{c}$ condition on $\mathscr{W}$ and we finish the proof of Lemma.
\end{proof}
\begin{proof}[Proof of Theorem \ref{cc}]
From Lemma \ref{lema 1} and Lemma \ref{cerami}, the Euler Lagrange functional $\Psi_{\lambda}$ satisfies the geometric conditions the Mountain Pass Theorem. Moreover $\Psi_{\lambda}(0)=0$. Therefore, by Theorem \ref{MountainPass}, the functional $\Psi_{\lambda}$ has a critical value $c\geqslant \mathcal{R}>0,$ that is,  exists $u \in \mathscr{W}$ such that  problem \eqref{e1.1} has at least one nontrivial weak solution in $\mathscr{W}.$
\end{proof}

 \subsection{Proof of Theorem \ref{t7}}
  \hfill \break  To prove   Theorem \ref{t7} we use some Lemmas presented below and   Theorem \ref{psm} what is $ '' \mathbb{Z}_{2}- symmetric ''$ version (for even functionals) of the Mountain Pass.

  \begin{lemma}\label{mp1}
  Assume $(a_1)$-$(a_3)$, $(\mathcal{K})$, and $(f_0)$-$(f_2)$ are  fulfilled. Then for each $\lambda >0$, there exist $\mathcal{R}_{1}>0$ and $ r_{1}>0$ such that $  \Psi_{\lambda}(u)\geqslant \mathcal{R}_{1}$ for all $ u \in \mathscr{W} $ with $ \|u \|_{\mathscr{W}} = r_{1}.$
 \end{lemma}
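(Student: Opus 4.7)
The plan is to follow essentially the same strategy used for Lemma \ref{lema 1}(ii), since the hypotheses are the same. The main idea is to estimate $\Phi(u)$ from below by a power of $\|u\|_{\mathscr{W}}$ comparable to $p^+$, while estimating the nonlinear term $\int_\Omega F(x,u)\,dx$ from above by a combination of a small multiple of $\|u\|_{\mathscr{W}}^{p^+}$ and a higher-order term $\|u\|_{\mathscr{W}}^{\underline{\vartheta}^-}$. Since $\underline{\vartheta}^- > p^+$ by $(f_0)$, the higher-order term becomes negligible for small $\|u\|_{\mathscr{W}}$, and the $p^+$-terms can be made dominant through a suitable choice of $\varepsilon$.

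First, from $(f_0)$ and $(f_2)$, given any $\varepsilon > 0$, there exists $c_\varepsilon > 0$ such that
\begin{equation*}
F(x,t) \leqslant \frac{\varepsilon}{p^+}|t|^{p^+} + c_\varepsilon |t|^{\vartheta(x)} \quad \text{for all } (x,t) \in \Omega \times \mathbb{R}.
\end{equation*}
By Lemma \ref{2.11}, the embeddings $\mathscr{W} \hookrightarrow L^{p^+}(\Omega)$ and $\mathscr{W} \hookrightarrow L^{\vartheta(\cdot)}(\Omega)$ are continuous, so there are constants $c_2, c_3 > 0$ with $\|u\|_{L^{p^+}(\Omega)} \leqslant c_2 \|u\|_{\mathscr{W}}$ and $\|u\|_{L^{\vartheta(\cdot)}(\Omega)} \leqslant c_3 \|u\|_{\mathscr{W}}$.

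Next, for $u \in \mathscr{W}$ with $\|u\|_{\mathscr{W}} \leqslant 1$, I use $(a_2)$, $(\mathcal{K})$, and Proposition \ref{lw0}(c) to bound
\begin{equation*}
\Phi(u) \geqslant \frac{c_{\mathcal{A}} b_0}{p^+} \int_{\mathbb{R}^N \times \mathbb{R}^N} \frac{|u(x)-u(y)|^{p(x,y)}}{|x-y|^{N+sp(x,y)}}\,dx\,dy \geqslant \frac{c_{\mathcal{A}} b_0}{p^+} \|u\|_{\mathscr{W}}^{p^+}.
\end{equation*}
Combining with the previous displays gives, for $\|u\|_{\mathscr{W}}$ small enough so that $c_3 \|u\|_{\mathscr{W}} \leqslant 1$ (to apply Proposition \ref{masmenos}(c) on the $\vartheta(\cdot)$-modular),
\begin{equation*}
\Psi_\lambda(u) \geqslant \frac{\|u\|_{\mathscr{W}}^{p^+}}{p^+}\bigl(c_{\mathcal{A}} b_0 - \lambda \varepsilon c_2^{p^+}\bigr) - \lambda c_\varepsilon c_3^{\underline{\vartheta}^-} \|u\|_{\mathscr{W}}^{\underline{\vartheta}^-}.
\end{equation*}

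Finally, fixing $\varepsilon \in \bigl(0, c_{\mathcal{A}} b_0 / (\lambda c_2^{p^+})\bigr)$ makes the coefficient of $\|u\|_{\mathscr{W}}^{p^+}$ strictly positive, and since $\underline{\vartheta}^- > p^+$ by $(f_0)$, there exists a sufficiently small $r_1 > 0$ such that the right-hand side at $\|u\|_{\mathscr{W}} = r_1$ is bounded below by some $\mathcal{R}_1 > 0$. There is no real obstacle here; the only slightly delicate point is making sure to work in the regime $\|u\|_{\mathscr{W}} \leqslant 1$ (and $c_3\|u\|_{\mathscr{W}} \leqslant 1$) so that the modular inequalities give the powers with exponents $p^+$ and $\underline{\vartheta}^-$ on the correct sides, which is precisely the reason the conclusion is a local statement on a small sphere.
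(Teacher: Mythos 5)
Your proposal is correct and follows exactly the same route as the paper: the paper's own proof of Lemma~\ref{mp1} simply refers to Lemma~\ref{lema 1}(ii), which uses the same combination of $(f_0)$--$(f_2)$ to bound $F$, the continuous embeddings from Lemma~\ref{2.11}, $(a_2)$--$(a_3)$ with $(\mathcal{K})$ and Proposition~\ref{lw0}(c) to bound $\Phi$ from below, the same choice $\varepsilon < c_{\mathcal{A}}b_0/(\lambda c_2^{p^+})$, and the same observation that $\underline{\vartheta}^- > p^+$ closes the argument on a small sphere. Your added remark about keeping $\|u\|_{\mathscr{W}}$ and $c_3\|u\|_{\mathscr{W}}$ below $1$ so the modular inequalities have the right orientation is a reasonable clarification of a point the paper leaves implicit.
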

  
   \begin{proof}
   The proof is as in the Lemma \ref{lema 1}.
   \end{proof}
  \begin{lemma}\label{pm2}
Assume $(a_1)$-$(a_3)$, $(\mathcal{K})$, and $(f_1)$ are  fulfilled. For every finite dimensional subspace $\widehat{\mathscr{W}} \subset \mathscr{W}$  there exists $\mathcal{R}_2 =
\mathcal{R}_2(\widehat{\mathscr{W}})$ such that
\begin{equation*}
\Psi_{\lambda}(u)\leqslant 0 \mbox{ for all } u \in \widehat{\mathscr{W}} \setminus B_{\mathcal{R}_2}(\mathscr{W}),
\end{equation*}
where $B_{\mathcal{R}_2}(\mathscr{W}) = \{ u \in\widehat{\mathscr{W}} : \| u \|_{\mathscr{W}} < \mathcal{R}_{2} \}$.
 \end{lemma}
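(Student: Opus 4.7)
The plan is a standard superlinearity argument, using only that on a finite dimensional subspace $\widehat{\mathscr{W}}\subset \mathscr{W}$ all norms are equivalent, together with hypothesis $(f_{1})$. First I would produce an upper estimate of the form
\begin{equation*}
\Phi(u)\leqslant \frac{C_{\mathcal{A}}\,b_{1}}{p^{-}}\,\rho_{\mathscr{W}}(u)\quad\text{for all }u\in\mathscr{W},
\end{equation*}
by integrating the pointwise bound $|\mathscr{A}(t)|\leqslant \frac{C_{\mathcal{A}}}{p^{-}}|t|^{p(x,y)}$ (a direct consequence of $(a_{2})$) against $K$ and applying $(\mathcal{K})$. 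Combined with Proposition \ref{lw0}(b), this yields $\Phi(u)\leqslant \frac{C_{\mathcal{A}}b_{1}}{p^{-}}\|u\|_{\mathscr{W}}^{p^{+}}$ whenever $\|u\|_{\mathscr{W}}\geqslant 1$.

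Next, I would invoke $(f_{1})$ in its standard consequential form: for every $M>0$ there exists $c_{M}>0$ such that
\begin{equation*}
F(x,t)\geqslant M\,|t|^{p^{+}}-c_{M}\quad\text{for all }(x,t)\in\Omega\times\mathbb{R}.
\end{equation*}
Putting these two estimates together,
\begin{equation*}
\Psi_{\lambda}(u)\leqslant \frac{C_{\mathcal{A}}b_{1}}{p^{-}}\|u\|_{\mathscr{W}}^{p^{+}}-\lambda M\|u\|_{L^{p^{+}}(\Omega)}^{p^{+}}+\lambda c_{M}|\Omega|,
\end{equation*}
valid for every $u\in\mathscr{W}$ with $\|u\|_{\mathscr{W}}\geqslant 1$.

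Now I would exploit that $\widehat{\mathscr{W}}$ is finite dimensional: since $\widehat{\mathscr{W}}\hookrightarrow L^{p^{+}}(\Omega)$ continuously (Lemma \ref{2.11}) and $\|\cdot\|_{\mathscr{W}}$, $\|\cdot\|_{L^{p^{+}}(\Omega)}$ are two norms on the finite dimensional space $\widehat{\mathscr{W}}$, there exists a constant $C(\widehat{\mathscr{W}})>0$ such that $\|u\|_{L^{p^{+}}(\Omega)}\geqslant C(\widehat{\mathscr{W}})\,\|u\|_{\mathscr{W}}$ for all $u\in\widehat{\mathscr{W}}$. Substituting gives
\begin{equation*}
\Psi_{\lambda}(u)\leqslant \Bigl(\frac{C_{\mathcal{A}}b_{1}}{p^{-}}-\lambda M\,C(\widehat{\mathscr{W}})^{p^{+}}\Bigr)\|u\|_{\mathscr{W}}^{p^{+}}+\lambda c_{M}|\Omega|.
\end{equation*}
I then choose $M$ so large that the bracketed coefficient is strictly negative; since $p^{+}>1$, the negative $p^{+}$-power term dominates the additive constant $\lambda c_{M}|\Omega|$ as $\|u\|_{\mathscr{W}}\to +\infty$, so one can select $\mathcal{R}_{2}=\mathcal{R}_{2}(\widehat{\mathscr{W}})\geqslant 1$ large enough that $\Psi_{\lambda}(u)\leqslant 0$ whenever $u\in\widehat{\mathscr{W}}$ and $\|u\|_{\mathscr{W}}\geqslant \mathcal{R}_{2}$. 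The only delicate point is the equivalence of norms step, which must be performed on $\widehat{\mathscr{W}}$ (not on $\mathscr{W}$, where it fails) and is the reason $\mathcal{R}_{2}$ genuinely depends on the chosen subspace; the rest is straightforward bookkeeping.
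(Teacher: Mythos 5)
Your proposal is correct and follows essentially the same route as the paper: bound $\Phi$ from above via $(a_2)$, $(\mathcal{K})$ and the modular, bound $F$ from below via the consequence \eqref{1l1} of $(f_1)$, and use finite dimensionality of $\widehat{\mathscr{W}}$ to make the negative $p^{+}$-power term dominate. Your write-up is actually a touch cleaner than the paper's (which fixes $u$ on the unit sphere and implicitly relies on a uniform positive lower bound for $\int_\Omega|u|^{p^+}\,dx$ there), since you make the finite-dimensional norm equivalence $\|u\|_{L^{p^+}(\Omega)}\geqslant C(\widehat{\mathscr{W}})\|u\|_{\mathscr{W}}$ explicit; this is exactly the step that lets $\mathcal{R}_2$ be chosen uniformly over $\widehat{\mathscr{W}}$.
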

  \begin{proof}
 Consider $\widehat{\mathscr{W}}$ be a finite dimensional  subspace of $ \mathscr{W}$ and let $u \in \widehat{\mathscr{W}}$ with $\|u \|_{ \mathscr{W}}=1$ fixed.   Thus,  for all  $t \geqslant 1$  using   $(a_{1})$,  $(a_{2})$, $(\mathcal{K})$, $(f_1)$, \eqref{1l1}, and   Proposition \ref{lw0}, we get
\begin{equation*}\label{dda1}
\begin{split}
\Psi_{\lambda}(tu)\leqslant\frac{C_{\mathcal{A}}b_1}{p^{-}}t^{p^{+}} -\lambda \mathscr{C} c_{p^{+}}t^{p^{+}} + \lambda c_{\mathscr{C}}|\Omega|.
\end{split}
\end{equation*}
Taking $\Pi(t)=\bigg[  \frac{C_{\mathcal{A}}b_1}{p^{-}}  - \lambda \mathscr{C} c_{p^{+}}\bigg]t^{p^{+}}+ \lambda c_{\mathscr{C}}|\Omega|$, if  $\mathscr{C}$ is  large enough, then $\Pi(t)\to -\infty$ as $t\to+\infty$.

\noindent Therefore, we have
\begin{equation*}
\begin{split}
\sup\,\{ \Psi_{\lambda}(u): u \in  \widehat{\mathscr{W}}, \|u \|_{  \mathscr{W}}=\mathcal{R}_{3}\} = \sup\,\{\Psi_{\lambda}(\mathcal{R}_{3}u): u \in  \widehat{\mathscr{W}}, \|u \|_{ \mathscr{W}}=1\} \to - \infty 
\end{split}
\end{equation*}
$ \mbox{ as } \mathcal{R}_{3} \to +\infty.$

\noindent Hence, there exists $\mathcal{R}_2 > 0$ sufficiently large such that $\Psi_{\lambda}(u) \leqslant 0$ for all $u \in \widehat{\mathscr{W}}$ with $\|u \|_{  \mathscr{W}}=\mathcal{R}_{3}$ and $\mathcal{R}_{3} \geqslant \mathcal{R}_2$.
   \end{proof}

\begin{proof}[Proof of Theorem \ref{t7}]
From Lemma \ref{lema 1}, the Euler Lagrange functional $\Psi_{\lambda}$  satisfies the $(C)_{c}$ condition. Moreover, $\Psi_{\lambda}(0)=0$ and  $\Psi_{\lambda}$ is even functional by conditions $(a_1)$ and $(f_4)$.
 Therefore, using the Lemma \ref{mp1}, Lemma \ref{pm2}, and Theorem \ref{psm} we conclude  the existence of an unbounded sequence of weak solutions to  problem \eqref{e1.1} and this completes the proof. 
\end{proof}

  \subsection{Proof of Theorem \ref{fountain}}
   \hfill \break
To prove the  Theorem \ref{fountain} we verify the  hypotheses of  Theorem \ref{foun}  through the Lemmas presented below.

 Since that $\mathscr{W}$ is a separable and reflexive Banach space, using  (\cite[Chapter 4]{fabian},  or \cite[Section 17]{zhao}) or \cite{schauder},  there exist sequence $(e_l)_{l \in \mathbb{N}}\subset \mathscr{W}$ and $(e^{\star}_{l})_{\mathbb{N}}\subset \mathscr{W}'$ such that
$$\mathscr{W}=\overline{span\{e_{l}:l=1,2,\ldots\}},
\; \mathscr{W}'=\overline{span\{e_{l}^{\star}:l=1,2,\ldots\}}^{\omega^{\star}},$$
and
$$ \langle e_{i}^{\star},e_{l}\rangle
= \left\{\begin{array}{rll}
1 & \hbox{se} & i=l, \\
0 & \hbox{se} & i\neq l.
\end{array}\right.$$
%(Note that, since \cite{zhao} is written in Chinese, the above assertions are based on
%the affirmations made by Fan and Han in \cite{Xi}, or by Yao in \cite{Yao}.) 

\noindent We denote
$$\mathscr{W}_{l}  =span\{e_{l}\},\; Y_{j}=\bigoplus_{l=1}^{j}\mathscr{W}_{l}=span\{e_{1},\ldots,e_{j}\},\mbox{ and } Z_{j}=\overline{\bigoplus_{l=j}^{\infty}\mathscr{W}_{l}}=\overline{span\{e_{j},e_{j+1}\ldots\}}.$$

 \begin{lemma}\label{beta}
If $\vartheta\in C^{+}(\overline{\Omega}),\,\,p^{+}<\vartheta(x)<p^{\star-}_{s}$ for all $x\in\overline{\Omega}$, denote
$$\beta_{j}:=\sup\{\|u\|_{L^{\vartheta(\cdot)}(\Omega)}:\|u\|_{\mathscr{W}}=1,\,u\in Z_{j}\},$$
then $\displaystyle{\lim_{j\to+\infty}\beta_{j}=0}.$
\end{lemma}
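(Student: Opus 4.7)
The plan is a standard monotonicity-plus-compactness argument exploiting the biorthogonal system $(e_l, e_l^\star)$ and the compact embedding provided by Lemma \ref{2.11}.

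First I would observe that since $Z_{j+1}\subset Z_j$, the sequence $(\beta_j)_{j\in\mathbb{N}}$ is non-increasing and bounded below by $0$, hence converges to some $\beta\geqslant 0$. It then suffices to show $\beta=0$. By the definition of the supremum, for each $j\in\mathbb{N}$ I can pick $u_j\in Z_j$ with $\|u_j\|_{\mathscr{W}}=1$ and
\[
\beta_j - \tfrac{1}{j}\;\leqslant\;\|u_j\|_{L^{\vartheta(\cdot)}(\Omega)}\;\leqslant\;\beta_j.
\]

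Next, since $(u_j)$ is bounded in the reflexive Banach space $\mathscr{W}$, after passing to a subsequence (still denoted $(u_j)$) there exists $u\in\mathscr{W}$ with $u_j\rightharpoonup u$ in $\mathscr{W}$. The key step is to identify $u=0$: for each fixed $k\in\mathbb{N}$, whenever $j>k$ the element $u_j$ lies in $Z_j\subset \overline{\mathrm{span}\{e_l:l\geqslant k+1\}}$, so the biorthogonality relation gives $\langle e_k^\star, u_j\rangle=0$. Passing to the weak limit yields $\langle e_k^\star, u\rangle=0$ for every $k$, and since $(e_k^\star)$ is a total family in $\mathscr{W}'$ this forces $u=0$.

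Now I invoke Lemma \ref{2.11}: since $p^+<\vartheta(x)<p^{\star}_s(x)$ uniformly on $\overline{\Omega}$ (with $p^{\star-}_s:=\inf_{\overline{\Omega}} p^{\star}_s$), the embedding $\mathscr{W}\hookrightarrow L^{\vartheta(\cdot)}(\Omega)$ is compact. Hence $u_j\to 0$ strongly in $L^{\vartheta(\cdot)}(\Omega)$, and therefore
\[
\beta \;=\; \lim_{j\to+\infty}\beta_j \;=\; \lim_{j\to+\infty}\|u_j\|_{L^{\vartheta(\cdot)}(\Omega)} \;=\; 0,
\]
which finishes the argument.

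The only delicate point is the identification $u=0$: one must be careful that $Z_j$ is defined as the closure of the span of the tail $\{e_l:l\geqslant j\}$, so that $u_j$ is only a limit of tail-linear combinations rather than a finite tail sum; continuity of the functionals $e_k^\star$ on $\mathscr{W}$ handles this, and the weak convergence then transfers the biorthogonality to the limit. Everything else is routine given the compact embedding of $\mathscr{W}$ into $L^{\vartheta(\cdot)}(\Omega)$ already established in Lemma \ref{2.11}.
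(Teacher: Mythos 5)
Your proposal is correct and follows essentially the same route as the paper: monotonicity of $\beta_j$, a near-maximizing sequence $(u_j)$ with $u_j\in Z_j$, identification of the weak limit as $0$ via the biorthogonal functionals $e_k^\star$, and the compact embedding $\mathscr{W}\hookrightarrow L^{\vartheta(\cdot)}(\Omega)$ from Lemma \ref{2.11} to upgrade to strong convergence. The only difference is that you spell out the closure/continuity point in the identification step a bit more carefully than the paper does; the substance is the same.
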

\begin{proof}
It is clear that, $0\leqslant\beta_{j+1}\leqslant\beta_{j}$, thus $\beta_{j}\to\beta\geqslant 0$ as $j\to+\infty . $ Let $u_{j}\in Z_{j}$ satisfy
$$\|u_{j}\|_{\mathscr{W}}=1,\,\,0\leqslant \beta_{j}-\|u_{j}\|_{L^{\vartheta(\cdot)}(\Omega)}<\frac{1}{j} \mbox{ for } j \in \mathbb{N}.$$
Since $\mathscr{W}$ is reflexive there exist a subsequence of $(u_{j})_{j\in \mathbb{N}}$ such that $u_{j}\rightharpoonup u$ as $j \to +\infty$. We claim $u=0.$ Indeed, for all $e_{m}^{\star}\in \{e_{l}^{\star}:l=1,2,\ldots\}$, we obtain $\langle e_{m}^{\star},u_{j}\rangle=0,\,\,j>m.$ So $\langle e_{m}^{\star},u_{j}\rangle\to 0\,\,as\,\,j\to\infty,$ this concludes $\langle e_{m}^{\star}, u \rangle=0$ for all $e_{m}^{\star}\in\{e_{l}^{\star}:l=1,2,\ldots.\}$. Therefore, $u=0$ and so $u_{j}\to 0$ as $j\to+\infty $. Since the embedding from $\mathscr{W}\hookrightarrow L^{\vartheta(x)}(\Omega)$ is compact, then $u_{j}\to 0\,\,$ in $\,\,L^{\vartheta(x)}(\Omega)$ as $j\to+\infty $.  Hence we get $\beta_{j}\to 0$ as $j\to+\infty $.
\end{proof}

\begin{lemma}\label{df}
Assume that $\Xi: \mathscr{W}\to\mathbb{R}$ is weakly-strongly continuous, namely, $u_k \rightharpoonup u$ implies $\Xi(u_k) \to \Xi(u)$, and $\Xi(0)=0.$ Then for each $\tau>0$ and $j\in \mathbb{N}$ there exists 
$$\alpha_{j}:=\sup\{|\Xi(u)|: u\in Z_{j},\,\,\|u\|_{\mathscr{W}}<\tau\}<\infty.$$
Moreover, $\displaystyle{\lim_{j\to +\infty}\alpha_{j}=0.}$ 
 \end{lemma}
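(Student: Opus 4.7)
The plan is to prove both claims by exploiting reflexivity of $\mathscr{W}$ together with the hypothesis that $\Xi$ is weakly-strongly continuous, essentially following the pattern of Lemma \ref{beta}.

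For the finiteness of $\alpha_j$, I would argue by contradiction. If $\alpha_j = +\infty$ for some $j$, then there exists a sequence $(u_k)_{k \in \mathbb{N}} \subset Z_j$ with $\|u_k\|_{\mathscr{W}} < \tau$ such that $|\Xi(u_k)| \to +\infty$. Since $\mathscr{W}$ is reflexive, the bounded sequence $(u_k)$ admits a subsequence weakly convergent to some $u \in \mathscr{W}$. By the weak-strong continuity of $\Xi$, $\Xi(u_k) \to \Xi(u) \in \mathbb{R}$, contradicting $|\Xi(u_k)| \to +\infty$. Therefore $\alpha_j < +\infty$ for every $j \in \mathbb{N}$.

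For the limit $\alpha_j \to 0$, note first that $Z_{j+1} \subset Z_j$, so the sequence $(\alpha_j)_{j \in \mathbb{N}}$ is nonincreasing and bounded below by $0$; hence $\alpha_j \to \alpha \geqslant 0$ as $j \to +\infty$. For each $j \in \mathbb{N}$, by the definition of supremum, choose $u_j \in Z_j$ with $\|u_j\|_{\mathscr{W}} < \tau$ and
\begin{equation*}
|\Xi(u_j)| \geqslant \alpha_j - \frac{1}{j}.
\end{equation*}
Since $(u_j)$ is bounded in the reflexive space $\mathscr{W}$, up to a subsequence $u_j \rightharpoonup u$ in $\mathscr{W}$ as $j \to +\infty$. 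Exactly as in the proof of Lemma \ref{beta}, for any fixed $m \in \mathbb{N}$ and all $j > m$ we have $u_j \in Z_j \subset Z_{m+1}$, so $\langle e_m^{\star}, u_j \rangle = 0$; passing to the limit yields $\langle e_m^{\star}, u \rangle = 0$ for every $m$, so $u = 0$. Then the weak-strong continuity of $\Xi$ together with $\Xi(0) = 0$ gives $\Xi(u_j) \to 0$, hence $\alpha = \lim_{j} \alpha_j \leqslant \lim_{j}(|\Xi(u_j)| + 1/j) = 0$, so $\alpha_j \to 0$.

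I do not expect a serious obstacle here: the only subtle point is the identification of the weak limit as zero, which is the standard ``biorthogonal system forces weak limits in $\bigcap_j Z_j$ to be $0$'' argument already appearing in Lemma \ref{beta}. The strict inequality $\|u\|_{\mathscr{W}} < \tau$ in the definition of $\alpha_j$ causes no trouble since I only need the existence of near-maximizers inside the open ball, which follows directly from the definition of supremum.
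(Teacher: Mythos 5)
Your proof is correct and follows essentially the same route as the paper's: monotonicity gives $\alpha_j \to \alpha \geqslant 0$, near-maximizers are extracted, the biorthogonal system argument from Lemma \ref{beta} identifies the weak limit as $0$, and weak-strong continuity with $\Xi(0)=0$ finishes. The one improvement you make is supplying an explicit contradiction argument for $\alpha_j < +\infty$, a step the paper's proof silently takes for granted.
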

 \begin{proof}
It is clear that, $0\leqslant\alpha_{j+1}\leqslant\alpha_{j}$, thus $\alpha_{j}\to\alpha\geqslant 0$ $j\to +\infty $.
 Let $u_{j}\in Z_{j},\,\,\|u\|_{\mathscr{W}}\leqslant \tau$ such that
$$0\leqslant \alpha_{j}-|\Xi(u_{j})|<\frac{1}{j}.$$
Since $\mathscr{W}$ is reflexive
there exist a subsequence of $(u_{j})_{j \in \mathbb{N}}$ such that $u_{j}\rightharpoonup u$. 
Proceeding as in the previous lemma, we obtain $u=0$. The weak- strong continuity of $\Xi$, guarantees $\Xi(u_{j})\to \Xi(0)=0 $. Therefore, $\alpha_{j}\to 0$ as $j\to +\infty $.\end{proof}

\begin{lemma}\label{f1}
Assume   $(a_{1})$-$(a_{3}),$ $(\mathcal{K}),$ $(f_{0})$,   and $(f_{2})$ are satisfied. Moreover, let $\displaystyle{\lambda \in\bigg( 0, \frac{c_{\mathcal{A}}b_0}{p^{+}}\bigg)}$. Then there exist $\rho_{j}>r_{j}>0$ such that:
\begin{itemize}
\item[$(h_2)$]  $b_{j}
:=\inf\{ \Psi_{\lambda}(u): u\in Z_{j},\,\|u\|_{\mathscr{W}}=r_{j}\} 
\to +\infty \mbox{ as } j\to+\infty;$
\item[$(h_3)$] $a_{j}:=\max\{ \Psi_{\lambda}(u) : 
u\in Y_{j},\,\|u\|_{\mathscr{W}}=\rho_{j}\}\leqslant 0.$
\end{itemize}
\end{lemma}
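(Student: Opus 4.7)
The plan is to verify the two geometric conditions of the Fountain Theorem separately, pivoting on the modular-norm correspondence of Proposition \ref{lw0} and the asymptotic smallness of the $L^{\vartheta(\cdot)}$-embedding constants on $Z_j$ provided by Lemma \ref{beta}. From $(a_{2})$--$(a_{3})$ I first obtain the pointwise sandwich $\frac{c_{\mathcal{A}}}{p^{+}}|t|^{p(x,y)}\leq\mathscr{A}(t)\leq\frac{C_{\mathcal{A}}}{p^{-}}|t|^{p(x,y)}$, so that $(\mathcal{K})$ yields
$$\frac{c_{\mathcal{A}}b_0}{p^{+}}\rho_{\mathscr{W}}(u)\leq \Phi(u)\leq \frac{C_{\mathcal{A}}b_1}{p^{-}}\rho_{\mathscr{W}}(u).$$
Together with Proposition \ref{lw0}(b), this gives $\Phi(u)\geq\frac{c_{\mathcal{A}}b_0}{p^{+}}\|u\|_{\mathscr{W}}^{p^{-}}$ and $\Phi(u)\leq\frac{C_{\mathcal{A}}b_1}{p^{-}}\|u\|_{\mathscr{W}}^{p^{+}}$ whenever $\|u\|_{\mathscr{W}}\geq 1$.

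For $(h_{2})$ the plan is to use $(f_{0})$ (sharpened with $(f_{2})$) to write $|F(x,t)|\leq c(|t|+|t|^{\vartheta(x)})$; then for $u\in Z_{j}$ with $\|u\|_{\mathscr{W}}=r_{j}\geq 1$, Lemma \ref{beta} yields $\|u\|_{L^{\vartheta(\cdot)}(\Omega)}\leq\beta_{j}r_{j}$, and once $\beta_{j}r_{j}\geq 1$, Proposition \ref{masmenos}(b) bounds $\int_{\Omega}|u|^{\vartheta(x)}\,dx\leq(\beta_{j}r_{j})^{\underline{\vartheta}^{+}}$; the linear piece is controlled by $\mathscr{W}\hookrightarrow L^{1}(\Omega)$. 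This produces
$$\Psi_{\lambda}(u)\geq \frac{c_{\mathcal{A}}b_0}{p^{+}}\,r_{j}^{p^{-}}-C_{1}\,r_{j}-\lambda C_{2}(\beta_{j}r_{j})^{\underline{\vartheta}^{+}}.$$
I would then set
$$r_{j}:=\left(\frac{c_{\mathcal{A}}b_0}{2p^{+}\lambda C_{2}\,\beta_{j}^{\underline{\vartheta}^{+}}}\right)^{1/(\underline{\vartheta}^{+}-p^{-})},$$
and use $\underline{\vartheta}^{+}>p^{+}>p^{-}>1$ together with $\beta_{j}\to 0$ to check that $r_{j}\to\infty$ and $\beta_{j}r_{j}\to\infty$, that the $\underline{\vartheta}^{+}$-term is absorbed by $\frac{c_{\mathcal{A}}b_0}{2p^{+}}r_{j}^{p^{-}}$, and that the linear remainder is of lower order; consequently $b_{j}\to+\infty$.

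For $(h_{3})$ I would fix $j\in\mathbb{N}$ and apply $(f_{1})$ in its integrated form $F(x,t)\geq\mathscr{C}|t|^{p^{+}}-c_{\mathscr{C}}$, valid for all $(x,t)\in\Omega\times\mathbb{R}$ once $\mathscr{C}$ is chosen. Since $\dim Y_{j}<\infty$, all norms on $Y_{j}$ are equivalent, so there is $c_{Y_{j}}>0$ with $\|u\|_{L^{p^{+}}(\Omega)}^{p^{+}}\geq c_{Y_{j}}\|u\|_{\mathscr{W}}^{p^{+}}$ for every $u\in Y_{j}$. Combining with the upper bound on $\Phi$,
$$\Psi_{\lambda}(u)\leq \Bigl(\frac{C_{\mathcal{A}}b_1}{p^{-}}-\lambda\mathscr{C}\,c_{Y_{j}}\Bigr)\|u\|_{\mathscr{W}}^{p^{+}}+\lambda c_{\mathscr{C}}|\Omega|\quad\text{for }\|u\|_{\mathscr{W}}\geq 1.$$
Picking $\mathscr{C}=\mathscr{C}(j)$ large enough to make the coefficient strictly negative and then $\rho_{j}>r_{j}$ sufficiently large drives the right-hand side below zero on $Y_{j}\cap\{\|u\|_{\mathscr{W}}=\rho_{j}\}$, giving $a_{j}\leq 0$.

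The delicate point is the calibration of $r_{j}$ in $(h_{2})$: because $\Phi$ only supplies a $p^{-}$-power floor while the nonlinearity contributes a $\underline{\vartheta}^{+}$-power, one must let $r_{j}$ grow slowly enough that $\beta_{j}^{\underline{\vartheta}^{+}}r_{j}^{\underline{\vartheta}^{+}}$ remains dominated by $r_{j}^{p^{-}}$, yet fast enough for $r_{j}^{p^{-}}$ to dominate the linear remainder and send $b_{j}$ to infinity. The gap condition $\underline{\vartheta}^{-}>p^{+}>p^{-}>1$ combined with $\beta_{j}\to 0$ is exactly what makes this window of admissible $r_{j}$ nonempty.
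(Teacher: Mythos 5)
Your argument is correct and follows the same overall strategy as the paper: control $\Phi$ from below via $(a_2)$--$(a_3)$, $(\mathcal{K})$ and Proposition~\ref{lw0}, control $\int_\Omega F(x,u)\,dx$ on $Z_j$ using $(f_0)$ together with Lemma~\ref{beta}, then calibrate $r_j$ in terms of $\beta_j$, and for $(h_3)$ use the integrated form of $(f_1)$ together with norm equivalence on the finite-dimensional $Y_j$.

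Two small points of comparison are worth flagging. First, your calibration $r_j=\bigl(\tfrac{c_{\mathcal{A}}b_0}{2p^{+}\lambda C_2\beta_j^{\underline{\vartheta}^+}}\bigr)^{1/(\underline{\vartheta}^+-p^-)}$ builds $\lambda$ and the factor $\tfrac12$ directly into $r_j$, so that the $\underline{\vartheta}^+$-term is absorbed by half of the coercivity term for \emph{every} $\lambda>0$; you therefore obtain $(h_2)$ without the constraint $\lambda<\tfrac{c_{\mathcal{A}}b_0}{p^+}$. The paper instead fixes the simpler $r_j=(c_1\beta_j^{\underline{\vartheta}^+})^{1/(p^--\underline{\vartheta}^+)}$, which turns the coefficient of $r_j^{p^-}$ into $\tfrac{c_{\mathcal{A}}b_0}{p^+}-\lambda$ and hence requires the $\lambda$-restriction in the lemma's conclusion; both are valid, yours is marginally more flexible. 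Second, both you and the paper invoke the consequence \eqref{1l1} of $(f_1)$ in the $(h_3)$ step, even though $(f_1)$ is not among the hypotheses listed in the Lemma~\ref{f1} statement (the paper does list it in the parent Theorem~\ref{fountain}); that is an inherited inconsistency in the hypothesis bookkeeping, not a flaw in your argument. Your explicit use of the finite-dimensional norm equivalence $\|u\|_{L^{p^+}(\Omega)}^{p^+}\geq c_{Y_j}\|u\|_{\mathscr{W}}^{p^+}$ to get a bound uniform over $Y_j$ with $\|u\|_{\mathscr{W}}=\rho_j$ is in fact slightly cleaner than the paper's ``let $t\to\infty$ for fixed $v$'' phrasing, since it makes the uniformity over the unit sphere of $Y_j$ transparent.
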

 \begin{proof} $(h_2)$ First, notice that for all $u\in Z_{j}$ with $\|u\|_{\mathscr{W}}>1,$ using $(a_{2})$, $(a_{3})$, $(\mathcal{K})$, and  $(f_{0})$, we infer
\begin{equation}\label{tt1}
\begin{split}
\Psi_{\lambda}(u)\geqslant  &  \frac{c_{\mathcal{A}}b_{0}}{p^{+}}\int_{\mathbb{R}^{N}\times \mathbb{R}^{N}}\frac{|u(x)-u(y)|^{p(x,y)}}{|x-y|^{N+sp(x,y)}}\,dx\,dy -\lambda c_{1}\int_{\Omega} \bigg(|u|+\frac{|u|^{\vartheta(x)}}{\vartheta(x)}\bigg)\,dx \\ \geqslant &  \frac{c_{\mathcal{A}}b_{0}}{p^{+}}\|u\|^{p^{-}}_{\mathscr{W}}-\lambda \frac{ c_{1}\beta_{j}^{\underline{\vartheta}^{+}}}{\underline{\vartheta}^{-}}\|u\|^{\underline{\vartheta}^{+}}_{\mathscr{W}}-\lambda c_{8}\|u\|_{\mathscr{W}},
\end{split}
\end{equation}
 for a constant $c_8>0$ and $\beta_{j}:=\sup\{\|u\|_{L^{\vartheta(x)}(\Omega)}:\|u\|_{ \mathscr{W}}=1,\,u\in Z_{j}\}.$ 

\noindent Now, since $p^{-}\leqslant p^{+}<\underline{\vartheta}^{+},$  by the Lemma \ref{beta},  it is easy see that 
$r_{j}:=(c_{1}\beta_{j}^{\underline{\vartheta}^{+}})^{\frac{1}{p^{-}-\underline{\vartheta}^{+}}}\to +\infty$ as $j\to+\infty.$ Then, for $j$ sufficiently large, 
$u\in Z_{j}$ with $\|u\|_{\mathscr{W}}=r_{j}>1,$ and by $(\ref{tt1}),$ we conclude
\begin{equation*}
\begin{split}
\Psi_{\lambda}(u) \geqslant \left(\frac{c_{\mathcal{A}}b_{0}}{p^{+}}-\lambda \right)r_{j}^{p^{-}}-\lambda c_{8}r_{j}.
\end{split}
\end{equation*}
Therefore, since $p^->1$ and $\displaystyle{\frac{c_{\mathcal{A}}b_{0}}{p^{+}}>\lambda,}$ we obtain that $b_{j}
:=\inf\{ \Psi_{\lambda}(u): u\in Z_{j},\,\|u\|_{\mathscr{W}}=r_{j}\}\to +\infty$ as $j\to +\infty.$
\\
\\
 \noindent $(h_3)$ Note that for all $v \in Y_{j}$ with $\|v\|_{\mathscr{W}}=1,\,$ using $(a_{2}),$ $(a_{3}),$  $(\mathcal{K})$, $(f_0)$ and  \eqref{1l1} for $t>1,$ we have
\begin{equation}\label{a2}
\begin{split}
\Psi_{\lambda}(tv)
 \leqslant t^{p^{+}}\left(\frac{C_{\mathcal{A}}b_{1}}{p^{-}}\|v\|_{\mathscr{W}}^{p^{+}}-\lambda \mathscr{C}\int_{\Omega}|v|^{p^{+}}\,dx \right)+\lambda c_{\mathscr{C}}|\Omega|.
\end{split}
\end{equation}
It is clear that we can choose $\mathscr{C}>0$ large enough such that
$$\frac{C_{\mathcal{A}}b_{1}}{p^{-}}\|v\|^{p^{+}}_{\mathscr{W}}-\lambda \mathscr{C}\int_{\Omega}|v|^{p^{+}}\,dx <0.$$
From \eqref{a2}, it follows that 
$$\lim_{t\to+\infty}\Psi_{\lambda}(tv)=-\infty.$$
Therefore, there exists $t_{0}>r_{j}>1$ large enough such that $\Psi_{\lambda}(t_{0}v)\leqslant 0$ and thus, if set $\rho_{j}=t_{0},$ we conclude that
$$a_{j}:=\max \lbrace \Psi_{\lambda}(u): u\in Y_{j};\,\,\|u\|_{\mathscr{W}}=\rho_{j} \rbrace \leqslant 0.$$

\end{proof}
%\subsection*{ Proof of Theorem \ref{fountain}}
\begin{proof}[Proof of Theorem \ref{fountain}]
From conditions $(a_{1})$ and $(f_{4})$  the Euler Lagrange functional $\Psi_{\lambda}$ is  even functional and by Lemma \ref{cerami},  $\Psi_{\lambda}$ satisfies   the $(C)_{c}$ condition for every $c>0.$ Then, with that and by Lemma \ref{f1}   all conditions of the  Theorem \ref{foun} are satisfied. Therefore, we obtain a sequence of critical points $(u_{k})_{k\in\mathbb{N}}$ in $\mathscr{W}$ such that $\Psi_{\lambda}(u_{k})\to +\infty$ as $k\to+\infty.$ 
\end{proof}
 \subsection{Proof of Theorem \ref{t5}}
  \hfill \break
  To prove  Theorem \ref{t5} we use the Lemma presented below and we verify hypotheses Theorem \ref{dual}.

\begin{lemma}\label{Cc'} 
Suppose that the hypotheses in Theorem \ref{t5} hold, then $\Psi_{\lambda}$ satisfied the $(C)_{c}^{\star}$ condition.
\end{lemma}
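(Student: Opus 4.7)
The plan is to adapt the two-step proof of Lemma~\ref{cerami}: first establish that any sequence $(u_k) \subset \mathscr{W}$ with $u_k \in Y_k$, $\Psi_\lambda(u_k) \to c$ and $\|\Psi'_\lambda|_{Y_k}(u_k)\|_{\mathscr{W}'}(1+\|u_k\|_{\mathscr{W}}) \to 0$ is bounded in $\mathscr{W}$, then upgrade weak convergence to strong convergence via the $(S_+)$ property of $\Phi'$ from Lemma~\ref{ll1}, and finally identify the limit as a critical point of $\Psi_\lambda$ on all of $\mathscr{W}$ by a density argument. The main obstacle is Stage I: one must verify that every use of the Cerami compactness in the Jeanjean-type argument of Lemma~\ref{cerami} only pairs the derivative against vectors lying inside $Y_k$, so that the weaker restricted hypothesis is enough.

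For Stage I, I would transcribe the proof of Lemma~\ref{cerami}. The two places at which the Cerami hypothesis enters are (i) $\langle \Psi'_\lambda(u_k), u_k\rangle = o_k(1)$ (used in \eqref{14} and at the end via \eqref{255}) and (ii) $\langle \Psi'_\lambda(t_k u_k), t_k u_k\rangle = o_k(1)$ for the Jeanjean maximizer $t_k \in [0,1]$ of $t \mapsto \Psi_\lambda(tu_k)$. Since $u_k \in Y_k$ and $Y_k$ is a linear subspace, $tu_k \in Y_k$ for every $t$; in particular $\langle \Psi'_\lambda(u_k), u_k\rangle = \langle \Psi'_\lambda|_{Y_k}(u_k), u_k\rangle$, which is $o_k(1)$ by the $(C)_c^\star$ hypothesis. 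Input (ii) is automatic: if $t_k \in (0,1)$ the chain rule gives $\frac{d}{dt}\big|_{t=t_k}\Psi_\lambda(tu_k) = 0$, while $t_k = 1$ reduces to (i). Combining with $(a_4)$ and $(f_3)$ exactly as in Lemma~\ref{cerami} yields the contradiction, and hence $\sup_k \|u_k\|_{\mathscr{W}} < +\infty$.

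For Stage II, reflexivity of $\mathscr{W}$ and Lemma~\ref{2.11} produce $u \in \mathscr{W}$ and a subsequence with $u_k \rightharpoonup u$ in $\mathscr{W}$ and $u_k \to u$ in $L^{\vartheta(\cdot)}(\Omega)$. Since $\bigcup_m Y_m$ is dense in $\mathscr{W}$, given $\varepsilon > 0$ I pick $v \in Y_m$ with $\|v - u\|_{\mathscr{W}} < \varepsilon$; for $k \geq m$ the vector $u_k - v$ lies in $Y_k$, so splitting
\begin{equation*}
\bigl|\langle \Psi'_\lambda(u_k), u_k - u\rangle\bigr| \leq \|\Psi'_\lambda|_{Y_k}(u_k)\|_{\mathscr{W}'}\,\|u_k - v\|_{\mathscr{W}} + \|\Psi'_\lambda(u_k)\|_{\mathscr{W}'}\,\|v - u\|_{\mathscr{W}},
\end{equation*}
the first summand is $o_k(1)$ by $(C)_c^\star$ and the boundedness of $(u_k)$, while the second is at most $C\varepsilon$ since $\|\Psi'_\lambda(u_k)\|_{\mathscr{W}'}$ is uniformly bounded via the growth estimates from $(a_2)$ and $(f_0)$. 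Sending $k \to \infty$ and then $\varepsilon \to 0$, and combining with $\lambda\int_\Omega f(x,u_k)(u_k-u)\,dx \to 0$ from the compact embedding into $L^{\vartheta(\cdot)}(\Omega)$ as in Lemma~\ref{cerami}, I obtain $\langle \Phi'(u_k), u_k - u\rangle \to 0$, and the $(S_+)$ property of Lemma~\ref{ll1} forces $u_k \to u$ strongly in $\mathscr{W}$. Finally, for any $v \in Y_m$ and $k \geq m$, $\langle \Psi'_\lambda(u_k), v\rangle = \langle \Psi'_\lambda|_{Y_k}(u_k), v\rangle \to 0$, while the $C^1$ continuity of $\Psi'_\lambda$ together with $u_k \to u$ gives $\langle \Psi'_\lambda(u_k), v\rangle \to \langle \Psi'_\lambda(u), v\rangle$; hence $\Psi'_\lambda(u)$ vanishes on the dense subspace $\bigcup_m Y_m$, and therefore on all of $\mathscr{W}$.
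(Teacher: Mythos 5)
Your proposal is correct and follows essentially the same route as the paper's proof: establish boundedness by running the Jeanjean-type argument of Lemma~\ref{cerami} and observing that every pairing of $\Psi'_\lambda$ occurs against vectors in $Y_k$, pass to a weakly convergent subsequence, approximate the weak limit by elements $v_k\in Y_k$, show $\langle \Phi'(u_k), u_k-u\rangle\to 0$ and invoke the $(S_+)$ property of Lemma~\ref{ll1}, then conclude $\Psi'_\lambda(u)=0$ by testing against the dense union $\bigcup_m Y_m$. The only cosmetic difference is that the paper chooses a single sequence $v_k\in Y_k$ with $v_k\to u$ and uses the uniform bound on $\|\Psi'_\lambda(u_k)\|_{\mathscr{W}'}$ directly, whereas you use a fixed $\varepsilon$-approximant $v\in Y_m$ and a diagonal pass $k\to\infty$, then $\varepsilon\to 0$; these are equivalent. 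Your Stage~I is in fact more explicit than the paper (which simply says ``analogously to the proof of Lemma~\ref{cerami}''), since you verify that both uses of the Cerami hypothesis only involve pairings against $Y_k$-vectors, including the point $t_ku_k\in Y_k$ in the Jeanjean maximizer argument.
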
 
\begin{proof}
Let $c\in \mathbb{R}$ and the sequence $(u_{k})_{k\in\mathbb{N}}\subset \mathscr{W}$ be such that  $u_{k}\in Y_{k}$ for all $k\in \mathbb{N},$ $\Psi_{\lambda}(u_{k})\to c$ and $\big\|\Psi'_{\lambda}{_{|_{Y_{k_{j}}}}}(u_{k})\big\|_{\mathscr{W}'}(1+\|u_{k}\|_{\mathscr{W}})  \to 0,$ as $k\to +\infty.$
Therefore, we have
$$c=\Psi_{\lambda}(u_{k})+o_{k}(1) \mbox{ and }\langle \Psi'_{\lambda}(u_{k}),u_{k} \rangle=o_{k}(1),$$
where $o_{k}(1)\to 0$ as $k\to+\infty.$
Analogously to the proof of Lemma \ref{cerami}, we can prove that the sequence 
$(u_{k})_{k\in\mathbb{N}}$ is bounded in $\mathscr{W}$. Since $\mathscr{W}$ is reflexive, we can extract a subsequence of $(u_{k})_{k\in\mathbb{N}},$ denoted for $(u_{k_{j}})_{j\in\mathbb{N}},$ and $u \in \mathscr{W} $ such that $u_{k_{j}}\rightharpoonup u$  in $\mathscr{W}$ as $j \to +\infty$.

\noindent On the other hand, as $\mathscr{W}=\overline{\cup_{k}Y_{k}}=\overline{span\{e_{k}:k\geqslant 1\}},$ we can choose $v_{k}\in Y_{k}$ such that $v_{k}\to u$  in $\mathscr{W}$ as $k \to +\infty$.
% where $Y_{n}=span\{e_{1},\ldots,e_{n}\},$ we affirm that can choose $v_{n}\in Y_{n}$ such that $v_{n}\to u$  in $X.$ 
%In effect, since $u\in X$, we have $u=\sum_{i=1}^{\infty}a_{i}e_{i}.$ As $X=\overline{span\{e_{i}:i\geqslant 1\}},$ for all $\varepsilon>0,$ there exists $x_{m}=\sum_{k=1}^{m}c_{k}e_{k}\in span\{e_{i}:i=1,\ldots,m\}=Y_{m}$ such that $\|u-x_{m}\|<\varepsilon;$ for $n\geqslant m,$ then, since $P_{n}(x_{m})=x_{m},$ where $P_{n}$ is the canonical projections $P_{n}:X\to X$ are defined for $n\in \mathbb{N}$ by $P_{n}\Big(\sum_{i=1}^{\infty} a_{i}e_{i}\Big)=\sum_{i=1}^{n} a_{i}e_{i}.$ Each $P_{n}$ is a linear projections from $X$ onto the linear subspace spanned by $\{e_{i} : i =1, 2, . . . , n\}.$
%Then,
%$$\|u-P_{n}(u)\|\leqslant \|u-x_{m}\|+\|P_{n}(x_{m})-P_{n}(u)\|\leqslant (1+\|P_{n}\|)\|u-x_{m}\|\leqslant (1+K)\varepsilon%$$
%Hence we define the sequence $v_{n}:=P_{n}(u),$ for all $n\in\mathbb{N},$ note that, $v_{n}\in Y_{n},\,\,n%\in\mathbb{N}$ and $v_{n}\to u$  in $X,$ from which we get the assertion. 
Hence, we conclude that
\begin{equation*}
\langle \Psi'_{\lambda}(u_{k_{j}}),u_{k_{j}}-u\rangle=\langle \Psi_{\lambda}'(u_{k_{j}}),u_{k_{j}}-v_{k_{j}}\rangle+ \langle \Psi_{\lambda}'(u_{k_{j}}),v_{k_{j}}-u\rangle.
\end{equation*}
Since $\Psi'_{\lambda}{_{|_{Y_{k_{j}}}}}(u_{k_{j}})\to 0$ and  $u_{k_{j}}-v_{k_{j}}\rightharpoonup 0$ in $Y_{k_{j}},$ as $j\to +\infty,$  we achieve
$$\lim_{j\to+\infty}\langle \Psi_{\lambda}'(u_{k_{j}}),u_{k_{j}}-u\rangle=0.$$
Furthermore, using H\"older's inequality, we obtain that
$$\int_{\Omega}f(x,u_{k_{j}})(u_{k_{j}}-u)\,dx \to 0 \mbox{ as  } j\to+\infty.$$
Therefore, 
$$\langle \Phi'(u_{k_{j}}),u_{k_{j}}-u\rangle=\lambda\int_{\Omega}f(x,u_{k_{j}})(u_{k_{j}}-u)\,dx+\langle \Psi_{\lambda}'(u_{k_{j}}),u_{k_{j}}-u\rangle\to 0\,\,as\,\,j\to+\infty.$$
Consequently, since $\Phi'$ is of type $(S_{+}),$ it follows that $u_{k_{j}}\to u$  in $\mathscr{W}$ as $j \to +\infty$.
Then, we conclude that $\Psi_{\lambda}$ satisfies the $(C)_{c}^{\star}$ condition. Thus, we obtain that $\Psi_{\lambda}'(u_{k_{j}})\to  \Psi_{\lambda}'(u)$ as $j\to +\infty.$ 

\noindent Let us prove $\Psi_{\lambda}'(u)=0$. Indeed, taking $\omega_{l}\in Y_{l}$, notice that when $k_{j}\geqslant l,$ we achieve
\begin{eqnarray*}
\langle \Psi_{\lambda}'(u),\omega_{l}\rangle&=&\langle \Psi_{\lambda}'(u)-\Psi_{\lambda}'(u_{k_{j}}),\omega_{l}\rangle+\langle\Psi_{\lambda}'(u_{k_{j}}),\omega_{l}\rangle\\
&=&\langle \Psi_{\lambda}'(u)-\Psi_{\lambda}'(u_{k_{j}}),\omega_{l}\rangle+\langle\Psi'_{\lambda}{_{|_{Y_{k_{j}}}}}(u_{k_{j}}),\omega_{l} \rangle,
\end{eqnarray*}
so, passing the limit on the right side of the equation above as $j\to +\infty,$ we conclude
$$\langle \Psi_{\lambda}'(u),w_{l}\rangle=0 \mbox{ for all }
    \omega_{l}\in Y_{l}.$$
Therefore, $\Psi_{\lambda}'(u)=0$ in $\mathscr{W}'$  and this show that $\Psi_{\lambda}$ satisfies the $(C)_{c}^{\star}$ condition for every $c\in \mathbb{R}.$
 \end{proof}

% \subsection*{Proof of Theorem \ref{t5}}
 \begin{proof}[Proof of Theorem \ref{t5}]
First we observe that from  $(a_{1})$, $(f_{3})$, and Lemma \ref{Cc'} the Euler Lagrange functional $\Psi_{\lambda} $ is even functional and satisfies the $(C)_{c}^{\star}$ condition for all $c\in\mathbb{R}$. 

\noindent Now we will show that the conditions $(g_{1}),\,(g_{2}),\,$ and $(g_{3})$ of the Dual Fountain Theorem are satisfied:

\noindent $(g_{1})$ First we note   that using $(f_0)$ and Young's inequality, we have
\begin{equation}\label{100}
|F(x,t)|\leqslant c_{9}(1+|t|^{\vartheta(x)})
\end{equation}
for a  positive constant $c_{9}$. Moreover,  as $\displaystyle{\lambda<\frac{c_{\mathcal{A}}b_{0} }{p^{+}}},$ we have
$$ \lim_{j\to +\infty}\left( \frac{c_\mathcal{A}b_{0}}{p^{+}}-\lambda\right)(c_{9}\beta_{j}^{\underline{\vartheta}^{+}})^{\frac{p^{-}}{p^{-}-\underline{\vartheta}^{+}}}=+\infty.$$
Then, there exists $j_{0}\in \mathbb{N}$ such that
$$\left( \frac{c_\mathcal{A}b_{0}}{p^{+}}-\lambda\right)(c_{9}\beta_{j}^{\underline{\vartheta}^{+}})^{\frac{p^{-}}{p^{-}-\underline{\vartheta}^{+}}} -\lambda c_{9}|\Omega|\geqslant 0\,\,\mbox{ for all } j\geqslant j_{0}.$$
Taking $\rho_{j}=(c_{9}\beta_{j}^{\underline{\vartheta}^{+}})^{\frac{1}{p^{-}-\underline{\vartheta}^{+}}}$ for $j\geqslant j_{0}$. It is clear that $\rho_{j}>1$ for all $j\in\mathbb{N},\,\,j\geqslant j_{0},$ since $\displaystyle{\lim_{j\to+{\infty}}\rho_{j}=+\infty}$. 
Using same the arguments of Theorem \ref{fountain}, for all $u\in Z_{j},$ with $\|u\|_{\mathscr{W}}=\rho_{j},$ and using  $(a_{2}),$ $(a_{3}),$ $(\mathcal{K})$, and \eqref{100}, we conclude that
\begin{equation*}
\begin{split}
\Psi_{\lambda}(u)\geqslant \left(\frac{c_{\mathcal{A}}b_{0}}{p^{+}}-\lambda \right)(c_{9}\beta_{j}^{\underline{\vartheta}^{+}})^{\frac{p^{-}}{p^{-}-\underline{\vartheta}^{+}}}-\lambda c_{9}|\Omega|
 \geqslant 0.
\end{split}
\end{equation*}
So, we obtain
$$a_{j}:=\inf\{\Psi_{\lambda}(u): u\in Z_{j},\,\,\|u\|_{\mathscr{W}}=\rho_{j}\}\geqslant 0.$$
\\
\noindent $(g_{2})$ Since $Y_{j}$ is finite dimensional all the norms are equivalent, there exists a constant 
$c_{10}>0$ such that $\|u\|_{L^{p^{+}}(\Omega)}\geqslant c_{10}\|u\|_{\mathscr{W}}$ for all $u\in Y_{j}$.  Then, from $(a_{1}),$ $(a_{2}),$  $(\mathcal{K})$, $(f_1)$, \eqref{1l1}, and Proposition \ref{lw0},  we infer
$$\Psi_{\lambda}(u)\leqslant \frac{C_{\mathcal{A}}b_1}{p^{-}}\|u\|^{p^{+}}_{\mathscr{W}}-\lambda c_{10} \mathscr{C}^{p^{+}}\|u\|^{p^{+}}_{\mathscr{W}}+\lambda
c_{\mathscr{C}}|\Omega| \mbox{ for all } u\in Y_{
j} \mbox{ with }\|u\|_{\mathscr{W}}\geqslant 1 .$$
Let $\displaystyle{\mathscr{B}(t)=\frac{C_{\mathcal{A}}b_1}{p^{-}}t^{p^{+}}-\lambda c_{10} \mathscr{C}^{p^{+} }t^{p^{+}}+\lambda c_{\mathscr{C}}|\Omega|}.$
However,  we can choose $\mathscr{C}>0$ large enough, such that
$$\lim_{t\to+\infty}\mathscr{B}(t)=-\infty.$$
Therefore, there exists $\overline{t}\in (1,+\infty)$ such that
$$\displaystyle{\mathscr{B}(t)<0 \mbox{ for all } t\in [\, \overline{t},+\infty )}.$$
Consequently, $\Psi_{\lambda}(u)<0$ for all $u\in Y_{j}$ with $\|u\|_{\mathscr{W}}=\overline{t}.$
Then, choosing $r_{j}=\overline{t}$ for all $j\in \mathbb{N}$, we have
$$b_{j}:=\max\{\Psi_{\lambda}(u): u\in Y_{j},\,\,\|u\|_{\mathscr{W}}=r_{j}\}<0.$$
We observe that we can change $j_{0}$ on other more large, if necessary, so that $\rho_{j}>r_{j}>0$ for all $j\geqslant j_{0}.$
\\
\noindent $(g_{3})$ Since $Y_{j}\cap Z_{j}\neq \emptyset$ and $0<r_{j}<\rho_{j},$ so, we have
$$d_{j}:=\inf\{\Psi_{\lambda}(u):u\in Z_{j},\,\|u\|_{\mathscr{W}}\leqslant \rho_{j}\} \leqslant b_{j}:=\max\{\Psi_{\lambda}(u):u\in Y_{j},\,\|u\|_{\mathscr{W}}=r_{j}\}<0. $$
 From $(f_{0})$, we obtain $|F(x,u)|\leqslant c_{11}(|t|+|t|^{\vartheta(x)})$,  for a constant positive $c_{11}$, and for all $(x,t) \in \Omega \times\mathbb{R}.$
 Consider, $\Sigma_{1}:\mathscr{W}\to\mathbb{R}$ and 
$\Sigma_{2}:\mathscr{W}\to\mathbb{R}$ defined by
\begin{equation}\label{fi}
\Sigma_{1}(u)=\int_{\Omega}\lambda c_{11}|u|^{\vartheta(x)}\,dx
\mbox{ and }\Sigma_{2}(u)=\int_{\Omega}\lambda c_{11}|u|\,dx.
\end{equation}
We have $\Sigma_{i}(0)=0,$ $i=1,2,$ and they are weakly-strongly continuous. Let us denote
$$\eta_{j}=\sup\{|  \Sigma_{1}(u)|: u\in Z_{j},\,\,\|u\|_{\mathscr{W}}\leqslant 1   \}, \,\,\,\,\xi_{j}=\sup\{|\Sigma_{2}(u)|: u\in Z_{j},\,\,\|u\|_{\mathscr{W}}\leqslant 1  \}.$$
Thus, since the embedding $\mathscr{W}\hookrightarrow L^{\vartheta(\cdot)}(\Omega)$ is compact it follows that by   Lemma \ref{df} that
$\displaystyle{\lim_{j\to +\infty}\eta_{j}=\lim_{j\to+\infty}\xi_{j}=0}.$

\noindent Now, consider $v\in Z_{j}$ with $\|v\|_{\mathscr{W}}=1$ and $0<t<\rho_{j}$. Then, from $(a_{1})$, $(a_{2})$, $(a_{3})$, $(\mathcal{K})$, and $(\ref{fi}),$ we obtain 
\begin{equation*}
\begin{split}
\Psi_{\lambda}(tv) \geqslant-\lambda\int_{\Omega}F(x,tv)\,dx \geqslant  -\Sigma_{1}(tv)-\Sigma_{2}(tv),
\end{split}
\end{equation*}
and since
$$\Sigma_{1}(tv)\leqslant t^{\underline{\vartheta}^{+}}\Sigma_{1}(v)  \mbox{ and }\Sigma_{2}(tv)=t\Sigma_{2}(v),$$
we achieve
$$\Psi_{\lambda}(tv)\geqslant -\rho_{j}^{\underline{\vartheta}^{+}}\Sigma_{1}(v)-\rho_{j}\Sigma_{2}(v)\geqslant -\rho_{j}^{\underline{\vartheta}^{+}}\eta_{j}-\rho_{j}\xi_{j}$$
 for all $t\in (0,\rho_{j})$ and $v\in Z_{j}$ with $\|v\|_{\mathscr{W}}=1.$
 Thus, $d_{j}\geqslant -\rho_{j}^{\underline{\vartheta}^{+}}\eta_{j}-\rho_{j}\xi_{j}$ and as $d_{j}<0$ for all $j\geqslant j_{0}$, we conclude that   $\displaystyle{\lim_{j\to+\infty}d_j=0}.$
  
\noindent  Therefore, the conditions of   Theorem \ref{dual} are satisfied. Consequently, there exists a sequence $(u_k)_{k\in\mathbb{N}}\subset \mathscr{W}$  of weak solutions  of problem  such that $\Psi_{\lambda}(u_k)< 0$ and  $\Psi_{\lambda}(u_k)\to 0$ as $k \to +\infty$  for $\lambda \in \displaystyle{\bigg(0, \frac{c_{\mathcal{A}b_0}}{p^{+}}\bigg)}$.
\end{proof}

 \subsection{Proof of Theorem \ref{t6}}
  \hfill \break
To prove  Theorem \ref{cc} we need of  Lemmas  below and Theorem \ref{MountainPass}.
 
 \begin{lemma}\label{2}
Suppose $(a_1)$-$(a_3)$ and $(f_1)$ holds. There is $v \in\mathscr{W}\setminus \{0\},$ such that  $\displaystyle{\lim_{t\to +\infty}\Psi_{\lambda}(tv)=-\infty}.$
\end{lemma}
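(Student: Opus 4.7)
The plan is to follow the strategy already used for Lemma \ref{lema 1}(i), adapting it to the present setting. The key tool is the $p^{+}$-superlinearity assumption $(f_{1})$, which by a standard argument yields that for every $\mathscr{C}>0$ there is a constant $c_{\mathscr{C}}>0$ with
\begin{equation*}
F(x,t)\geqslant \mathscr{C}|t|^{p^{+}}-c_{\mathscr{C}} \quad \text{for all } (x,t)\in \Omega\times \mathbb{R}.
\end{equation*}
This is precisely inequality \eqref{1l1}, and it is the only structural input we need from $f$.

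First I would pick any fixed $v\in \mathscr{W}\setminus\{0\}$ (for instance a nonnegative $v$ with $v\not\equiv 0$ on $\Omega$, so that $\int_{\Omega}|v|^{p^{+}}\,dx>0$). Then, using $(a_{2})$ to control $\mathscr{A}(u(x)-u(y))\leqslant \frac{C_{\mathcal{A}}}{p^{-}}|u(x)-u(y)|^{p(x,y)}$ together with $(\mathcal{K})$, and pulling out the scaling factor $t>1$ using $p(x,y)\leqslant p^{+}$, I would estimate
\begin{equation*}
\Phi(tv)\leqslant \frac{C_{\mathcal{A}}b_{1}}{p^{-}}\,t^{p^{+}}\int_{\mathbb{R}^{N}\times\mathbb{R}^{N}}\frac{|v(x)-v(y)|^{p(x,y)}}{|x-y|^{N+sp(x,y)}}\,dx\,dy,
\end{equation*}
which is exactly the type of bound used for \eqref{d1}. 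Combined with the $F$ lower bound above, this gives
\begin{equation*}
\Psi_{\lambda}(tv)\leqslant t^{p^{+}}\!\left[\frac{C_{\mathcal{A}}b_{1}}{p^{-}}\,\rho_{\mathscr{W}}(v)-\lambda\mathscr{C}\int_{\Omega}|v|^{p^{+}}\,dx\right]+\lambda c_{\mathscr{C}}|\Omega|.
\end{equation*}

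Now I would choose $\mathscr{C}$ large enough (depending on the fixed $v$ and on $\lambda$) so that the bracketed quantity is strictly negative; this is possible precisely because $\int_{\Omega}|v|^{p^{+}}\,dx>0$. Letting $t\to +\infty$, the $t^{p^{+}}$ term dominates the additive constant $\lambda c_{\mathscr{C}}|\Omega|$, so $\Psi_{\lambda}(tv)\to -\infty$, as required. There is no real obstacle here: the only subtlety is to note that the auxiliary constant $c_{\mathscr{C}}$ produced by $(f_{1})$ is harmless because it contributes only an additive, $t$-independent term, while the interplay between the two homogeneous-in-$t$ terms is arranged by choosing $\mathscr{C}$ after $v$ has been fixed.
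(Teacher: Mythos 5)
Your proposal is correct and follows exactly the paper's approach: the paper's proof of this Lemma simply reads ``The proof is as in Lemma \ref{lema 1}-(i),'' and you have reproduced that argument (the lower bound \eqref{1l1} from $(f_1)$, the upper bound on $\Phi(tv)$ via $(a_2)$ and $(\mathcal{K})$ leading to \eqref{d1}, and the choice of $\mathscr{C}$ large) faithfully. No discrepancies.
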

\begin{proof}
 The proof is as in the Lemma \ref{lema 1}-(i).
%From  $(a_{1})$, $(a_{2})$, $(\mathcal{K})$, $(f_{1})$,\eqref{1l1}, and   Lemma \ref{lw0} for $t>1$, it follows that 
%\begin{equation}\label{222}
%\begin{split}
%\Psi_{\lambda}(tv )\leqslant & \frac{C_{\mathcal{A}}b_1}{p^{-}}t^{p^{+}}\|v\|_{\mathscr{W}}^{p^{+}}-\lambda \mathscr{C} c t^{p^{+}}\|v\|_{\mathscr{W}}^{p^{+}}+\lambda c_{\mathscr{C}} |\Omega|.
%\end{split}
%\end{equation}
%However, as we can choose $\mathscr{C}>0$ large enough such that
%\begin{equation*}
%\frac{C_{\mathcal{A}}b_1}{p^{-}}\|v\|^{p^{+}}_{\mathscr{W}}-\lambda c\mathscr{C}\|v\|^{p^{+}}_{\mathscr{W}}<0.
%\end{equation*}
%Therefore,  from (\ref{222}), we get 
%$$\lim_{t\to+\infty}\Psi_{\lambda}(tv)=-\infty $$
%and thus we conclude the proof this Lemma.
\end{proof} 
 \begin{lemma}\label{1}
Suppose $(a_1)$-$(a_3)$, $(\mathcal{K})$,  $(f_{0})$,   $f(x,0)=0,$ and $f(x,t)\geqslant 0
\mbox{ a.e. }x\in \Omega$ and for all $t\geqslant 0$ holds. Then, there exist $\overline{\lambda}>0$, positive constants $\mathsf{C}_{\lambda}$ and 
$\rho_{\lambda}$ for $\lambda \in (0, \overline{\lambda})$ such that $\displaystyle{\lim_{\lambda\to 0^{+}}\mathsf{C}_{\lambda}=+\infty}$ and $\Psi_{\lambda}(u)>\mathsf{C}_{\lambda}>0$ whenever $\|u\|_{\mathscr{W}}=\rho_{\lambda}.$
\end{lemma}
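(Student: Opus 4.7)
The plan is to bound $\Psi_{\lambda}(u)=\Phi(u)-\lambda\int_{\Omega}F(x,u)\,dx$ from below on a sphere of radius $\rho_{\lambda}$ that diverges as $\lambda\to 0^{+}$. First, combining $(a_{2})$ and $(a_{3})$ gives the pointwise bound $\mathscr{A}(t)\geqslant\frac{c_{\mathcal{A}}}{p^{+}}|t|^{p(x,y)}$; integrating this against $K$ via $(\mathcal{K})$ and applying Proposition \ref{lw0} yields
\[
\Phi(u)\;\geqslant\;\frac{c_{\mathcal{A}}b_{0}}{p^{+}}\,\rho_{\mathscr{W}}(u)\;\geqslant\;\frac{c_{\mathcal{A}}b_{0}}{p^{+}}\|u\|_{\mathscr{W}}^{p^{-}}\quad\text{for } \|u\|_{\mathscr{W}}\geqslant 1.
\]
From $(f_{0})$ one has $|F(x,t)|\leqslant c_{1}|t|+\frac{c_{1}}{\underline{\vartheta}^{-}}|t|^{\vartheta(x)}$, and the continuous embeddings $\mathscr{W}\hookrightarrow L^{1}(\Omega)$ and $\mathscr{W}\hookrightarrow L^{\vartheta(\cdot)}(\Omega)$ supplied by Lemma \ref{2.11}, together with Proposition \ref{masmenos}, produce constants $A_{1},A_{2}>0$ for which
\[
\int_{\Omega}F(x,u)\,dx\;\leqslant\;A_{1}\|u\|_{\mathscr{W}}+A_{2}\|u\|_{\mathscr{W}}^{\underline{\vartheta}^{+}}\quad\text{for } \|u\|_{\mathscr{W}}\geqslant 1.
\]

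Assembling the two estimates I obtain, on the sphere $\|u\|_{\mathscr{W}}=\rho\geqslant 1$,
\[
\Psi_{\lambda}(u)\;\geqslant\;\frac{c_{\mathcal{A}}b_{0}}{p^{+}}\rho^{p^{-}}-\lambda A_{1}\rho-\lambda A_{2}\rho^{\underline{\vartheta}^{+}}.
\]
The decisive step is to choose $\rho_{\lambda}$ so that the quasilinear term and the $\underline{\vartheta}^{+}$-growth term balance; explicitly,
\[
\rho_{\lambda}\;:=\;\left(\frac{c_{\mathcal{A}}b_{0}}{2\,p^{+}A_{2}\,\lambda}\right)^{\frac{1}{\underline{\vartheta}^{+}-p^{-}}},
\]
which makes $\lambda A_{2}\rho_{\lambda}^{\underline{\vartheta}^{+}}=\frac{c_{\mathcal{A}}b_{0}}{2 p^{+}}\rho_{\lambda}^{p^{-}}$ and reduces the estimate to
\[
\Psi_{\lambda}(u)\;\geqslant\;\frac{c_{\mathcal{A}}b_{0}}{2 p^{+}}\rho_{\lambda}^{p^{-}}-\lambda A_{1}\rho_{\lambda}.
\]

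The main obstacle is controlling the linear remainder $\lambda A_{1}\rho_{\lambda}$, which arises from the additive ``$1$'' in $(f_{0})$: unlike in Lemma \ref{lema 1}, hypothesis $(f_{2})$ is not assumed here, so the remainder cannot be absorbed on a small sphere around the origin. To handle it, I would compare scalings: $\rho_{\lambda}^{p^{-}}$ grows like $\lambda^{-p^{-}/(\underline{\vartheta}^{+}-p^{-})}$ while $\lambda\rho_{\lambda}$ grows like $\lambda^{(\underline{\vartheta}^{+}-p^{-}-1)/(\underline{\vartheta}^{+}-p^{-})}$, so the quotient $\lambda\rho_{\lambda}^{1-p^{-}}=\lambda^{(\underline{\vartheta}^{+}-1)/(\underline{\vartheta}^{+}-p^{-})}$ tends to $0$ precisely because $\underline{\vartheta}^{+}>p^{+}>1$. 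Taking $\overline{\lambda}>0$ small enough that $\lambda A_{1}\rho_{\lambda}^{1-p^{-}}\leqslant\frac{c_{\mathcal{A}}b_{0}}{4 p^{+}}$ for all $\lambda\in(0,\overline{\lambda})$ yields the conclusion with $\mathsf{C}_{\lambda}:=\frac{c_{\mathcal{A}}b_{0}}{4 p^{+}}\rho_{\lambda}^{p^{-}}$, which blows up to $+\infty$ as $\lambda\to 0^{+}$.
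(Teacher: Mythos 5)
Your argument is correct and follows the same strategy as the paper: bound $\Phi$ below via $(a_2)$, $(a_3)$, $(\mathcal{K})$ and Proposition \ref{lw0}, bound $\int_\Omega F$ above via $(f_0)$ and the embeddings, then choose a $\lambda$-dependent radius $\rho_\lambda=c\,\lambda^{-\gamma}$ and let $\lambda\to 0^+$. The only difference is bookkeeping: the paper applies Young's inequality first to get $|F(x,t)|\leqslant c_9(1+|t|^{\vartheta(x)})$ and takes $\gamma$ strictly below $1/(\underline{\vartheta}^{+}-p^{-})$, whereas you keep the linear term explicit, take $\gamma$ exactly at that critical value, balance the $\underline{\vartheta}^{+}$-term against half the leading term, and then show the linear remainder is asymptotically negligible — both are clean and yield the same conclusion.
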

 \begin{proof}
We consider $u\in \mathscr{W}$ with $\|u\|_{\mathscr{W}}>1$. Then, using  \eqref{100}, Young's inequality, and Lemma \ref{lw0}, we conclude that
\begin{equation}\label{410}
\begin{split}
\Psi_{\lambda}(u)\geqslant&\frac{c_{\mathcal{A}}b_{0}}{p^{+}}\|u\|^{p^{-}}-\lambda c_{13}\|u\|_{\mathscr{W}}^{\underline{\vartheta}^{+}}-\lambda c_{9}|\Omega|
\end{split}
\end{equation}
for constant positive $c_{13}$.

\noindent Let  $\gamma\in \left(0,\frac{1}{\underline{\vartheta}^{+}-p^{-}}\right)$ and $u\in \mathscr{W}$ such that $\|u\|_{\mathscr{W}}=\lambda^{-\gamma}.$ We define 
$\rho_{\lambda}:=\lambda^{-\gamma}$ and we observe that $\rho_{\lambda}>1$ for $\lambda$ small enough. Hence, from \eqref{410}, we conclude
$$\Psi_{\lambda}(u)\geqslant \frac{c_{\mathcal{A}}b_0}{p^{+}}\lambda^{-\gamma p^{-}}-c_{13}\lambda^{1-\gamma \underline{\vartheta}^{+}}-\lambda c_{9}|\Omega|.$$
Since $\gamma<\frac{1}{\underline{\vartheta}^{+}-p^{-}}$ it follows that  $-\gamma p^{-}<1-\gamma \underline{\vartheta}^{+}.$ Thus $\mathsf{C}_{\lambda}:=\frac{c_\mathcal{A}b_{0}}{p^{+}}\lambda^{-\gamma p^{-}}-c_{13}\lambda^{1-\gamma\underline{\vartheta}^{+}}-\lambda c_{9}|\Omega|\to +\infty$ as $\lambda\to 0^{+}.$ Hence, there exists $\overline{\lambda}>0$ small enough such that $\mathsf{C}_{\lambda}>0$ for all $\lambda\in(0,\overline{\lambda}).$
Then we obtain that 
$$\Psi_{\lambda}(u)\geqslant \mathsf{C}_{\lambda}>0=\Psi_{\lambda}(0)$$
for all $u\in\mathscr{W}$ with $\|u\|_{\mathscr{W}}=\rho_{\lambda}=\lambda^{-\gamma}$ and $\lambda\in (0, \overline{\lambda}).$
 Therefore, $\mathsf{C}_{\lambda}$ verifies the assertion of the Lemma.
 \end{proof}

% \subsection*{Proof of Theorem \ref{t6}}
  \begin{proof} [Proof of Theorem \ref{t6}]
From Lemma \ref{cerami} the functional $\Psi_{\lambda}$ satisfy the $(C)_{c}$ condition. Moreover, $\Psi_{\lambda}(0)=0$.  Then, by Lemma \ref{2}, Lemma \ref{1}, and Theorem \ref{MountainPass} we obtain that there are constant $\overline{\lambda}$ and a nontrivial critical point $u_{\lambda}$ for $\Psi_{\lambda}$ with $\lambda \in (0, \overline{\lambda})$ such that
\begin{equation*}
c=\Psi_{\lambda}(u_{\lambda})\geqslant \mathsf{C}_{\lambda}.
\end{equation*}
Then, from $(a_{1}),$ $(a_{2})$, $(\mathcal{K})$, $ (f_{0})$, \eqref{100}, Proposition \ref{lw0},  and Lemma \ref{2.11}, we achieve
\begin{equation}\label{lan}
\begin{split}
\mathsf{C}_{\lambda} \leqslant \Psi_{\lambda}(u_{\lambda})\leqslant \frac{C_{\mathcal{A}}b_1}{p^{-}}\max\{\|u_{\lambda}\|^{p^{+}}_{\mathscr{W}},\|u_{\lambda}\|^{p^{-}}_{\mathscr{W}}\}+\lambda c_{13}\max\{\|u_{\lambda}\|^{\underline{\vartheta}^{+}}_{\mathscr{W}},\|u_{\lambda}\|^{\underline{\vartheta}^{-}}_{\mathscr{W}}\}+\lambda c_9|\Omega|.
\end{split}
\end{equation}

\noindent Hence, taking $\lambda\to 0^{+}$ in (\ref{lan}) as $\mathsf{C}_{\lambda}\to +\infty,$ we get
$$\lim_{\lambda\to 0^{+}}\|u_{\lambda}\|_{\mathscr{W}}=+\infty.$$ 
Therefore, we conclude the proof of Theorem \ref{t6}.
\end{proof}
 
 \subsection{Proof of Theorem \ref{genus}}
  \hfill \break
  To proof  Theorem \ref{genus} it is enough to verify that $\Psi_{\lambda}$ satisfies the hypotheses of Theorem \ref{kajikiya}. 
%Assuming hypotheses of Theorem \ref{genus} holds, the following lemma will help us in the proof of Theorem \ref{genus}.  
  \begin{lemma}\label{PSt6}
Suppose $(a_1)$- $(a_3)$,  $(\mathcal{K})$, and $(f_5)$ holds. Then the functional $\Psi_{\lambda}$ is bounded from below and satisfies the $(PS)$ condition.

\end{lemma}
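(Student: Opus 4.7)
The strategy for this lemma is the standard approach for sublinear variational problems: establish coercivity (which immediately yields the lower bound) and then use coercivity together with the compact embedding and the $(S_+)$ property of $\Phi'$ to upgrade weak convergence to strong convergence along Palais--Smale sequences.

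First, I would prove coercivity of $\Psi_\lambda$. From $(a_2)$, $(a_3)$ and $(\mathcal{K})$ together with Proposition \ref{lw0}, for $\|u\|_{\mathscr{W}}\geqslant 1$ one gets
\begin{equation*}
\Phi(u)\;\geqslant\;\frac{c_{\mathcal{A}}b_0}{p^{+}}\,\rho_{\mathscr{W}}(u)\;\geqslant\;\frac{c_{\mathcal{A}}b_0}{p^{+}}\,\|u\|_{\mathscr{W}}^{p^{-}}.
\end{equation*}
Integrating $(f_5)$ gives $|F(x,t)|\leqslant C_2(1+|t|^{\mathfrak{m}(x)})$ for all $(x,t)\in\Omega\times\mathbb{R}$. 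Since $1<\underline{\mathfrak{m}}^{-}\leqslant\underline{\mathfrak{m}}^{+}<p^{-}<p^{\star}_s(x)$, Lemma \ref{2.11} yields a continuous embedding $\mathscr{W}\hookrightarrow L^{\mathfrak{m}(\cdot)}(\Omega)$, so combining this with Proposition \ref{masmenos} one gets
\begin{equation*}
\int_{\Omega}F(x,u)\,dx\;\leqslant\; C_3\bigl(1+\|u\|_{\mathscr{W}}^{\underline{\mathfrak{m}}^{+}}\bigr).
\end{equation*}
Consequently
\begin{equation*}
\Psi_\lambda(u)\;\geqslant\;\frac{c_{\mathcal{A}}b_0}{p^{+}}\|u\|_{\mathscr{W}}^{p^{-}}-\lambda C_3\|u\|_{\mathscr{W}}^{\underline{\mathfrak{m}}^{+}}-\lambda C_3,
\end{equation*}
and since $\underline{\mathfrak{m}}^{+}<p^{-}$ the right hand side tends to $+\infty$ as $\|u\|_{\mathscr{W}}\to+\infty$. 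This shows $\Psi_\lambda$ is coercive and hence bounded from below.

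Next I would verify the Palais--Smale condition. Let $(u_k)_{k\in\mathbb{N}}\subset\mathscr{W}$ satisfy $\Psi_\lambda(u_k)\to c$ and $\Psi'_\lambda(u_k)\to 0$ in $\mathscr{W}'$. Coercivity forces $(u_k)$ to be bounded in $\mathscr{W}$, so by reflexivity and Lemma \ref{2.11} there exists $u\in\mathscr{W}$ and a subsequence (not relabelled) with
\begin{equation*}
u_k\rightharpoonup u\;\;\text{in }\mathscr{W},\qquad u_k\to u\;\;\text{in }L^{\mathfrak{m}(\cdot)}(\Omega),\qquad u_k(x)\to u(x)\;\;\text{a.e. in }\Omega.
\end{equation*}
Using $(f_5)$, the function $|f(x,u_k)|\leqslant C_1|u_k|^{\mathfrak{m}(x)-1}$ is bounded in $L^{\mathfrak{m}'(\cdot)}(\Omega)$ (here $\mathfrak{m}'$ denotes the conjugate exponent), so the H\"older inequality in variable exponent spaces gives
\begin{equation*}
\left|\int_{\Omega}f(x,u_k)(u_k-u)\,dx\right|\;\leqslant\; C_4\,\bigl\||u_k|^{\mathfrak{m}(\cdot)-1}\bigr\|_{L^{\mathfrak{m}'(\cdot)}(\Omega)}\,\|u_k-u\|_{L^{\mathfrak{m}(\cdot)}(\Omega)}\;\longrightarrow 0.
\end{equation*}

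Finally I would combine this with $\langle\Psi'_\lambda(u_k),u_k-u\rangle\to 0$ to deduce
\begin{equation*}
\langle\Phi'(u_k),u_k-u\rangle=\langle\Psi'_\lambda(u_k),u_k-u\rangle+\lambda\int_{\Omega}f(x,u_k)(u_k-u)\,dx\;\longrightarrow 0.
\end{equation*}
Since $u_k\rightharpoonup u$ in $\mathscr{W}$, the $(S_+)$ property from Lemma \ref{ll1}(iii) yields $u_k\to u$ strongly in $\mathscr{W}$, which is the desired Palais--Smale condition. The only subtlety in the argument is keeping the variable exponent modular estimates straight when moving between $\underline{\mathfrak{m}}^{-}$ and $\underline{\mathfrak{m}}^{+}$, but the inequality $\underline{\mathfrak{m}}^{+}<p^{-}$ is exactly what makes the sublinear term controllable by the main term in both coercivity and compactness parts.
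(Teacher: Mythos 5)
Your proposal is correct and follows essentially the same argument as the paper: coercivity via the growth estimates from $(a_2)$, $(a_3)$, $(\mathcal{K})$, $(f_5)$ and the exponent gap $\underline{\mathfrak{m}}^{+}<p^{-}$, then boundedness of Palais–Smale sequences, the variable-exponent H\"older estimate using the compact embedding $\mathscr{W}\hookrightarrow L^{\mathfrak{m}(\cdot)}(\Omega)$, and finally the $(S_{+})$ property of $\Phi'$ to upgrade weak to strong convergence. (You even use the factor $c_{\mathcal{A}}b_0/p^{+}$, which is what $(a_3)$ actually yields, whereas the paper writes $c_{\mathcal{A}}b_0/p^{-}$; this appears to be a harmless typo in the paper.)
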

 \begin{proof}
Let $u \in \mathscr{W}$ and suppose  $\|u\|_{\mathscr{W}}>1.$  Then, from $(a_{2}),$ $(a_{3})$, $(\mathcal{K}),$ $(f_{5}),$  Lemma \ref{2.11} and Lemma\ref{ll1}  for each $\lambda>0,$ we infer  that
\begin{equation*}
\begin{split}
\Psi_{\lambda}(u) \geqslant&  \frac{c_{\mathcal{A}}b_0}{p^{-}}\|u\|^{p^{-}}_{\mathscr{W}}-\frac{\lambda c_{14}}{\underline{\mathfrak{m}}^{-}}\|u\|^{\underline{\mathfrak{m}}^{+}}_{\mathscr{W}}.
\end{split}
\end{equation*}
 Since $\underline{\mathfrak{m}}^{+}<p^{-},$ follows that $\Psi_{\lambda}$ is coercive. Therefore, $\Psi_{\lambda}$ is bounded from below.

\noindent Now, let $(u_{k})_{k\in \mathbb{N}}$ be a sequence in $\mathscr{W}$ such that
$$\Psi_{\lambda}(u_{k})\to c \mbox{ and }\Psi'_{\lambda}(u_{k})\to 0 \mbox{ in } \mathscr{W}' \mbox{ as  }  k \to +\infty.$$
 By coercivity of  $\Psi_{\lambda}$, we have $(u_{k})_{k\in \mathbb{N}}$ is bounded in  $\mathscr{W}.$ Hence up to a subsequence, still denoted by $(u_{k})_{k \in \mathbb{N}},$ we have $u_{0}\in \mathscr{W}$ 
 such that $u_{k}\rightharpoonup u_{0}$  in $ \mathscr{W}$ and from Lemma \ref{lw0} we infer  that  $u_{k}\to u_{0}$ in $ L^{\mathfrak{q}(\cdot)}(\Omega)$ and $u_{k}(x)\to u_{0}(x)$  a.e. $x\in  \Omega$, as $k\to +\infty$.
 
 Since $\Psi'_{\lambda}(u_{k})\to 0$ in $\mathscr{W}$ as $k\to+\infty$  and the sequence $(u_k)_{k \in \mathbb{N}}$ is bounded in $\mathscr{W}$, we have  that
\begin{equation}\label{elma1}
\langle \Psi'_{\lambda}(u_{k}),u_{k}-u_{0}\rangle\to 0\,\,as\,\,k\to+\infty.
\end{equation}
 Now, using  $(f_{5})$ and H\"older's inequality, we obtain that
\begin{equation*}
\begin{split}
  \Big| \int_{\Omega}f(x,u_{k})(u_{k}-u_{0})\,dx \Big| 
  \leqslant&  
  C_{2}\Big\||u_{k}|^{\mathfrak{m}(x)-1}\Big\|_{L^{\frac{\mathfrak{m}(\cdot)}{\mathfrak{m}(\cdot)-1}}(\Omega)}\|u_{k}-u_{0}\|_{L^{\mathfrak{m}(\cdot)}(\Omega)}.
  \end{split}
\end{equation*}  
     Thus, taking into account that $u_{k}\to u_{0}$ in $L^{\mathfrak{m}(\cdot)}(\Omega)$ as $,k\to+\infty$, we achieve
     \begin{equation}\label{elma2}
     \int_{\Omega}f(x,u_{k})(u_{k}-u_{0})\,dx\to 0\,\,as\,\,k\to+\infty.
     \end{equation}
Therefore, from \eqref{elma1} and \eqref{elma2}, we conclude that 
\begin{equation*}
\begin{split}
     \langle \Phi'(u_{k}),u_{k}-u_{0}\rangle =&  \langle \Psi'_{\lambda}(u_{k}),u_{k}-u_{0}\rangle     -\lambda\int_{\Omega}f(x,u_{k})(u_{k}-u_{0})\,dx
       \longrightarrow  0 \mbox{ as } k\to +\infty.
       \end{split}
       \end{equation*}
Since $\Phi'$ is of type $(S_{+})$ (see Lemma \ref{ll1}), we obtain $u_{k}\to u_{0}$  in $\mathscr{W}$ as $k\to+\infty,$ concluding the proof of the Palais-Smale condition.       
\end{proof}

% \subsection*{Proof of Theorem \ref{genus}}
  \begin{proof}[Proof of Theorem \ref{genus}]
 From $(a_{1})$ and  $(f_{4})$  the Euler Lagrange functional $\Psi_{\lambda}$  is even functional. Furthermore, by  Lemma \ref{PSt6}   $\Psi_{\lambda}$  is  bounded from below and satisfies the Palais-Smale condition. Hence the item $(I1)$ of  Theorem \ref{kajikiya} is verified.  Let us show that $\Psi_{\lambda}$ satisfies $(I2)$.  Since $\mathscr{W}$ is a reflexive and separable Banach space, for each $j\in \mathbb{N},$ consider a $j$-dimensional linear subspace $\mathscr{W}_{j}\subset C_{0}^{\infty}(\Omega)$ of $\mathscr{W}.$
We define
$$\mathbb{S}_{\mathcal{R}_{4}}^{j}=\{u\in \mathscr{W}_{j}:\|u\|_{\mathscr{W}}=\mathcal{R}_{4}\}$$
where $\mathcal{R}_{4}>0$ will be determined later on.
Since $\mathscr{W}_{j}$ and $\mathbb{R}^{j}$ are isomorphic and  $\mathbb{S}_{\mathcal{R}_{4}}^{j}$ 
is homeomorphic to the $(j-1)$-dimensional sphere $\mathbb{S}^{j-1}$ by an odd mapping, it has genus $j$, i.e., $\gamma(\mathbb{S}_{\mathcal{R}_{4}}^{j})=j$.

\noindent Now, from $(a_{1})$, $(a_{2})$, $(\mathcal{K})$, and $(f_{5})$, we obtain
\begin{equation}\label{rafe}
\begin{split}
   \Psi_{\lambda}(u)  \leqslant & \frac{C_{\mathcal{A}}b_{1}}{p^{-}}\int_{\mathbb{R}^{N}\times \mathbb{R}^{N}}\frac{|u(x)-u(y)|^{p(x,y)}}{|x-y|^{N+sp(x,y)}}\,dx\,dy-\lambda C_{0}\int_{\Omega}|u|^{\mathfrak{m}(x)}\,dx.
   \end{split}
\end{equation}
By Proposition \ref{lw0}, if $\|u\|_{\mathscr{W}}<1,$ we have 
$\rho_{\mathscr{W}}(u)\leqslant \|u\|^{p^{-}}_{\mathscr{W}},$
and $\rho_{\mathfrak{m}(\cdot)}(u)\geqslant \|u\|_{L^{\mathfrak{m}(\cdot)}(\Omega)}^{\underline{\mathfrak{m}}^{+}}$ for every $u\in \mathscr{W}.$ Moreover, since $\mathscr{W}_{j}$ is a finite dimensional space, any norm in $\mathscr{W}_{j}$ is equivalent to each other. Thus, there exist a constant $C(j)>0$ such that 
$\displaystyle{C(j)\|u\|^{\underline{\mathfrak{m}}^{+}}_{\mathscr{W}}\leqslant \int_{\Omega}|u|^{\mathfrak{m}(x)}\,dx}$ for every $u\in \mathscr{W}_{j}$. Consequently, by \eqref{rafe}, we get
\begin{equation*}
\begin{split}
  \Psi_{\lambda}(u)      \leqslant  \|u\|^{\underline{\mathfrak{m}}^{+}}_{\mathscr{W}}\bigg( \frac{C_{\mathcal{A}}b_{1}}{p^{-}}\|u\|^{p^{-}-\underline{\mathfrak{m}}^{+}}_{\mathscr{W}}-\lambda C(j)C_{0}\bigg),
\end{split}
\end{equation*}
for every $u\in \mathscr{W}_{j}$ with $\|u\|_{\mathscr{W}}<1.$ Let $\mathcal{R}_{5}\in(0,1)$ such that
  $$ 
      \frac{C_{\mathcal{A}}b_{1}}{p^{-}}\mathcal{R}_{5}^{p^{-}-\underline{\mathfrak{m}}^{+}}<\lambda C(j)C_{0}.
  $$
Thus, for all $0<\mathcal{R}_{4}<\mathcal{R}_{5}$ and $u\in \mathbb{S}_{\mathcal{R}_{4}}^{j}$, we have that
 \begin{equation*}
 \begin{split}
   \Psi_{\lambda}(u) \leqslant  \mathcal{R}_{4}^{\underline{\mathfrak{m}}^{+}}\bigg(\frac{C_{\mathcal{A}}b_{1}}{p^{-}}\mathcal{R}_{4}^{p^{-}
   -\underline{\mathfrak{m}}^{+}}-\lambda C(j)C_{0} \bigg)\leqslant \mathcal{R}_{5}^{\underline{\mathfrak{m}}^{+}}\bigg(\frac{C_{\mathcal{A}}b_{1}}{p^{-}}\mathcal{R}_{5}^{p^{-}-\underline{\mathfrak{m}}^{+}}-C(j)C_{0}\bigg)<  0=\Psi_{\lambda}(0).
   \end{split}
\end{equation*}
Therefore, we conclude that
\begin{equation*}\label{ss}
\sup_{u \in \mathbb{S}_{\mathcal{R}_{4}}^{j}} \Psi_{\lambda}(u)<0=\Psi_{\lambda}(0).
\end{equation*}
Thus the hypotheses of Theorem \ref{kajikiya}  are satisfied and we conclude that  there exists a sequence nontrivial weak solutions $u_j$ in $\mathscr{W}$  such that
$$\Psi_{\lambda}(u_j)\leqslant 0, \hspace{0.2cm} \Psi_{\lambda}'(u_j)=0  \mbox{ and } \|u_j\|_{\mathscr{W}}\to 0 \mbox{ as } j\to +\infty.$$

 \end{proof}
  \section{Appendix}\label{apendice}
   \hfill \break
This section presents several results which are used in our paper.

\begin{proof}[Proof of Lemma \ref{lw1}]
We consider the set $ \mathbb{M}=\{ u \in \mathscr{W}; \| u \|_{L^{p(\cdot)}(\Omega)}=1\}$. Then to prove this Lemma  it suffices to prove that
\begin{equation*}
\inf_{u\in \mathbb{M}}[u]^{s, p(\cdot, \cdot)}_{\mathbb{R}^{N}}= \zeta_1>0.
\end{equation*}
Initially, we observe that $\zeta_1 \geqslant 0$ and we prove $\zeta_1$ is attained in $\mathbb{M}$. Let $(u_{k})_{k\in \mathbb{N}} \subset \mathbb{M}$ be a minimizing sequence, that is, $[u_{k}]^{s, p(\cdot, \cdot)}_{\mathbb{R}^{N}}\rightarrow \zeta_1$ as $k \to +\infty$. This implies that $(u_{k})_{k\in \mathbb{N}}$ is bounded in  $ \mathscr{W} $ and $L^{p(\cdot)}(\Omega)$, therefore in $W^{s,p(\cdot, \cdot)}(\Omega)$. Consequently  up to a
subsequence $u_k \rightharpoonup u_0$ in $W^{s,p(\cdot, \cdot)}(\Omega)$ as $k\to +\infty$. Thus, from Corollary \ref{3.4a}, it follows that $u_k \to u_0$ in $L^{p(\cdot)}(\Omega)$ as $k\to +\infty$. We extend $u_0$ to $\mathbb{R}^{N}$ by setting $u_0(x) = 0$ on $x \in \mathbb{R}^{N}\setminus \Omega$. This implies $u_k(x) \to u_0(x)$ a.e. $x \in \mathbb{R}^{N}$ as $k \to +\infty$. Hence by using Fatou’s Lemma, we have
\begin{equation*}
\int_{\mathbb{R}^{N}\times \mathbb{R}^{N}} \frac{|u_0(x)-u_0(y)|^{p(x,y)}}{|x-y|^{N+sp(x,y)}}\,dx\,dy \leqslant \liminf_{k\to +\infty}\int_{\mathbb{R}^{N}\times \mathbb{R}^{N}} \frac{|u_k(x)-u_k(y)|^{p(x,y)}}{|x-y|^{N+sp(x,y)}}\,dx\,dy
\end{equation*}
which implies that
$$[u_{0}]^{s, p(\cdot, \cdot)}_{\mathbb{R}^{N}} \leqslant \liminf_{k\to +\infty}[u_{k}]^{s, p(\cdot, \cdot)}_{\mathbb{R}^{N}}=\zeta_1,$$
and thus $u_0 \in \mathscr{W}$. Moreover,  $\| u_0 \|_{L^{p(\cdot)}(\Omega) }=1$ and then $u_0 \in \mathbb{M} $. In particular, $u_{0}\neq 0$ and $[u_{0}]^{s, p(\cdot, \cdot)}_{\mathbb{R}^{N}}= \zeta_1>0$.
\end{proof}

 \begin{proof}[Proof of Lemma \ref{2.11}]
First we observe that by Lemma \ref{lw1},  for all $ u \in \mathscr{W}$, we get
\begin{equation}\label{liim}
\begin{split}
\|u\|_{W^{s,p(\cdot,\cdot)}(\Omega)}  \leqslant \|u\|_{L^{p(\cdot)}(\Omega)} + \|u\|_{\mathscr{W}}  \leqslant \Bigg(  \frac{1}{\zeta_1} +1 \Bigg) \|u\|_{\mathscr{W}},
\end{split}
\end{equation}
that is,  $ \mathscr{W}$ is continuously embedded in $ W^{s,p(\cdot,\cdot)}(\Omega)$, and by Corollary \ref{3.4a}   we conclude that  $ \mathscr{W} $ is continuously embedded in $ L^{r(\cdot)}(\Omega)$. To prove that the embedding above  is compact  we consider  $(u_{k})_{k\in \mathbb{N}}$ a bounded sequence in $ \mathscr{W}$. Using \eqref{liim}, follows that $(u_{k})_{k\in \mathbb{N}}$ is bounded in $  W^{s,p(\cdot, \cdot)}(\Omega) $. Hence by Corollary \ref{3.4a}, we infer that there  exists $u_0 \in {L^{r(\cdot)}(\Omega)} $  such that up to a subsequence $u_k\to u_0$  in ${L^{r(\cdot)}(\Omega)} $ as $k \to +\infty$.   Since that $u_k = 0$ a.e. in $\mathbb{R}^{N}\setminus\Omega$ for all $k \in \mathbb{N}$,  so we define $u_0 = 0$ a.e. in  $\mathbb{R}^{N}\setminus\Omega$ and obtain that the convergence occurs in $L^{r(\cdot)}(\Omega)$. This completes the  proof this Lemma.

\end{proof}

\begin{proof}[Proof of Lemma \ref{ll1}]
  $(i)$  Using standard arguments proof this item.

\noindent $(ii)$  From $(i)$ the functional $\Phi$ is of   class  $C^{1}(\mathscr{W}, \mathbb{R})$, and by  hypothesis $(a_{1})$,  the functional $\Phi'$ is   monotone. Thus, by  \cite[Lemma 15.4]{kavian}  we conclude that
$\langle \Phi'(u), u_k-u\rangle +\Phi (u) \leqslant \Phi (u_k) $ for all $k \in \mathbb{N}$.

\noindent Thus, since  $u_k \rightharpoonup u$ in $\mathscr{W}$, as $k\to +\infty$ we obtain $\displaystyle{   \Phi (u) \leqslant \liminf_{k\to +\infty}\Phi (u_k)}$. That is, the functional $\Phi$ is weakly lower semicontinuous.
\\
\\
\noindent $(iii)$ Let $(u_{k})_{k\in \mathbb{N}}$ be a sequence in  $\mathscr{W}$ as in the statement. We have that by $(i)$, $\Phi'$ is a continuous  functional. Therefore,
 \begin{equation}\label{ss0}
\displaystyle{\lim_{k\to +\infty}\langle \Phi'(u), u_k-u \rangle = 0}.
\end{equation}
Now, we observe that
\begin{equation}\label{ss1}
\langle \Phi'(u_k)- \Phi'(u), u_k-u \rangle=  \langle \Phi'(u_k), u_k-u \rangle -\langle  \Phi'(u), u_k-u \rangle \mbox{ for all } k \in \mathbb{R}.
\end{equation}
Thus by \eqref{inffo}, \eqref{ss0}, and \eqref{ss1}, we infer
\begin{equation}\label{sa}
\limsup_{k\to +\infty}\langle \Phi'(u_k)- \Phi'(u), u_k-u \rangle \leqslant 0.
\end{equation}
Furthermore, since $\Phi$ is  strictly convex by hypothesis $(a_1)$,   $\Phi'$ is monotone (see \cite[Lemma 15.4]{kavian}), we have
\begin{equation}\label{ss2}
\langle \Phi'(u_k)- \Phi'(u), u_k-u \rangle \geqslant 0 \hspace{0.2cm }\mbox{ for all}  \hspace{0.1cm }k \in \mathbb{N}.
\end{equation}
Therefore, by \eqref{sa} and \eqref{ss2}, we infer that
\begin{equation}\label{sb}
\lim_{k\to +\infty}\langle \Phi'(u_k)- \Phi'(u), u_k-u \rangle = 0.
\end{equation}
Consequently, by  \eqref{ss0}, \eqref{ss1}, and \eqref{sb}, we conclude
\begin{equation}\label{sc}
\lim_{k\to +\infty}\langle \Phi'(u_k), u_k-u \rangle = 0.
\end{equation}
Since $\Phi$ is   strictly convex,  we get
 \begin{equation}\label{sd}
  \Phi(u)+ \langle \Phi'(u_k), u_k-u \rangle \geqslant  \Phi(u_k)\hspace{0.2cm }\mbox{for all} \hspace{0.1cm } k \in \mathbb{N}.
\end{equation}
Thus, by \eqref{sc} and  \eqref{sd}, we have
\begin{equation}\label{se}
   \Phi(u) \geqslant\lim_{k \to +\infty}\Phi(u_k). 
\end{equation}
Since  $\Phi$ is  weakly lower semicontinuous (see $(ii)$) and by \eqref{se}, we conclude that
\begin{equation}\label{ss4}
  \Phi(u) =\lim_{k \to +\infty}\Phi(u_k).
\end{equation}
On the other hand, by \eqref{sb} the sequence $( \mathcal{U}_{k}(x,y) )_{k\in \mathbb{N}}$  converge to $0$   in $L^{1}(\mathbb{R}^{N}\times\mathbb{R}^{N})$ as $k \to +\infty$, where
\begin{equation*}
\mathcal{U}_{k}(x,y):= [\mathcal{A}(u_k(x)-u_k(y))- \mathcal{A}(u(x)-u(y))[(u_k(x)-u_k(y))-(u(x)-u(y)]K(x,y)\geqslant 0.
\end{equation*} 
Hence there exists  a subsequence $(u_{k_{j}})_{j\in \mathbb{N}}$ of  $(u_{k})_{k\in \mathbb{N}}$ such that     
\begin{equation}\label{sg}
\mathcal{U}_{k_j}(x,y)\to 0 \mbox{ a.e.  } (x,y)\in \mathbb{R}^{N}\times \mathbb{R}^{N} \mbox{ as } j \to +\infty. 
\end{equation}
We denoted $\mu_{j}(x,y)=u_{k_j}(x)-u_{k_j}(y)$ and $\mu(x,y)=u(x)-u(y)$  for all $(x,y)\in \mathbb{R}^{N}\times \mathbb{R}^{N}$.
\\
\textbf{Claim a.} If $\mathcal{U}_{k_j}(x,y)\to 0 \mbox{ a.e.  } (x,y)\in \mathbb{R}^{N}\times \mathbb{R}^{N}$, 
then  $\mu_{j}(x,y)\to\mu(x,y)$  as $j\to +\infty$ for almost all $(x,y)\in \mathbb{R}^{N}\times \mathbb{R}^{N}$. \\ Indeed,  fixed $ (x,y)\in \mathbb{R}^{N}\times\mathbb{R}^{N}$ with $x\neq y$   we suppose  by contradiction  that the  sequence $(\mu_{j}(x,y))_{j\in \mathbb{N}}$ is unbounded for $(x,y)\in \mathbb{R}^{N}\times \mathbb{R}^{N}$ fixed. Using \eqref{sg} we get $\mathcal{U}_{k_j}(x,y) \to 0$ as $j\to +\infty$ in $\mathbb{R}$, consequently there is  $M>0$ such that for all $ j \in \mathbb{N}$
\begin{equation}\label{sh}
\Big|[\mathcal{A}(\mu_{j}(x,y))- \mathcal{A}(\mu(x,y))](\mu_{j}(x,y)-\mu(x,y))K(x,y)\Big|\leqslant M. 
\end{equation}
Thus, denoting 
$$V_{\mathcal{A}}:= [\mathcal{A}(\mu_{j}(x,y))\mu_{j}(x,y)+ \mathcal{A}(\mu(x,y))\mu(x,y)]K(x,y) \mbox{for all } j \in \mathbb{R},$$
 we get from \eqref{sh} that
\begin{equation*}
\begin{split}
V_{\mathcal{A}} \leqslant & M +  \mathcal{A}(\mu(x,y))\mu_{j}(x,y)K(x,y) +\mathcal{A}(\mu_{j}(x,y))\mu(x,y)K(x,y) \mbox{for all } j \in \mathbb{R}.
\end{split}
\end{equation*}
So using  $({a}_{2})$ and  $(\mathcal{K})$ in  inequality above, we have  $\mbox{for all } j \in \mathbb{R}$ that,
\begin{equation}\label{si}
\begin{split}
c_{\mathcal{A}}b_0\frac{|\mu_{j}(x,y)|^{p(x,y)}}{|x-y|^{N+sp(x,y)}}+c_{\mathcal{A}}b_0\frac{|\mu(x,y)|^{p(x,y)}}{|x-y|^{N+sp(x,y)}}\leqslant & M + C_{\mathcal{A}}b_1\frac{|\mu(x,y)|^{p(x,y)-1}|\mu_{j}(x,y)|}{|x-y|^{N+sp(x,y)}} \\ &+ C_{\mathcal{A}}b_1\frac{|\mu_{j}(x,y)|^{p(x,y)-1}|\mu(x,y)|}{|x-y|^{N+sp(x,y)}}.
\end{split}
\end{equation}
\noindent  Dividing \eqref{si} by $|\mu_{j}(x,y)|^{p(x,y)}$, we achieve
 \begin{equation}\label{estre}
 \begin{split}
\frac{c_{\mathcal{A}}b_0}{|x-y|^{N+sp(x,y)}} +\frac{c_{\mathcal{A}}b_0|\mu(x,y)|^{p(x,y)}}{|\mu_{j}(x,y)|^{p(x,y)}|x-y|^{N+sp(x,y)}}\leqslant & \frac{M}{|\mu_{j}(x,y)|^{p(x,y)}} \\& + \frac{C_{\mathcal{A}}b_1|\mu(x,y)|^{p(x,y)-1}}{|\mu_{j}(x,y)|^{p(x,y)-1}|x-y|^{N+sp(x,y)}} \\ &+ \frac{C_{\mathcal{A}}b_1|\mu(x,y)|}{|\mu_{j}(x,y)||x-y|^{N+sp(x,y)}}
\end{split}
\end{equation}
$\mbox{for all } j \in \mathbb{R}.$
Since  we are supposing that the sequence $(\mu_{j}(x,y))_{j \in \mathbb{N}}$  is unbounded, we can assume that $|\mu_{j}(x,y)| \to + \infty$ as $j \to + \infty$, then by  \eqref{estre} we obtain $c_{\mathcal{A}}b_0 \leqslant 0$ which is an contradiction.

\noindent Therefore,  the sequence $(\mu_{j}(x,y))_{j \in\mathbb{N}}$   is bounded in $\mathbb{R}$ and up to a subsequence $(\mu_{j}(x,y))_{j \in\mathbb{N}}$     converges to some $\nu \in \mathbb{R}$. Thus we obtain $\mu_{j}(x,y)\to \nu$ $\mbox{as } j \to + \infty$. Thence denoting $$\mathcal{U}(x,y):= [\mathcal{A}(\nu)- \mathcal{A}(\mu(x,y))](\nu -(\mu(x,y))K(x,y)$$ and using  $(a_{1})$  we conclude that
\begin{equation}\label{zeroa}
\mathcal{U}_{k_j}(x,y)\to \mathcal{U}(x,y) \mbox{ as } j \to + \infty.
\end{equation}
Consequently,  by \eqref{sg} and \eqref{zeroa}, we get
\begin{equation}\label{mono}
\mathcal{U}(x,y)= [\mathcal{A}(\nu)- \mathcal{A}(\mu(x,y))](\nu -(\mu(x,y))K(x,y)= 0.
\end{equation}
In this way, by strictly convexity  of $\mathscr{A}$, \eqref{mono} this occurs only if  $\nu =\mu(x,y)= u(x)-u(y)$. Therefore, by uniqueness of limit 
\begin{equation}\label{converi}
 u_{k_{j}}(x)-u_{k_{j}}(y)=\mu_{j}(x,y) \to\mu(x,y)= u(x)-u(y) \mbox{  in } \mathbb{R} \mbox{ as } j\to +\infty
\end{equation}
for almost all $ (x, y) \in \mathbb{R}^{N}\times \mathbb{R}^{N}$.

\noindent Now we consider   the sequence $(g_{k_j})_{j \in\mathbb{N}}$ in $L^{1}(\mathbb{R}^{N}\times \mathbb{R}^{N})$ defined pointwise for all $j \in \mathbb{N}$ by
\begin{equation*}
g_{k_j}(x,y):= \bigg[\frac{1}{2}\bigg(\mathscr{A}(\mu_{j}(x,y))+ \mathscr{A}(\mu(x,y))\bigg)- \mathscr{A}\bigg(\frac{ \mu_{j}(x,y)-\mu(x,y)}{2}\bigg)\bigg]K(x,y).
\end{equation*}
By convexity the map $ \mathscr{A}$ (see $(a_{1})$),  $g_{k_j}(x,y)\geqslant 0$ for almost all $(x,y)\in \mathbb{R}^{N}\times\mathbb{R}^{N}$. Furthermore, by continuity of map  $ \mathscr{A} $ (see $(a_{1})$) and \eqref{converi}, we have $$ g_{k_j}(x,y)\to \mathscr{A} (\mu(x,y))K(x,y) \mbox{ as } j \to +\infty \hspace{0.2cm}\mbox{ for all } \hspace{0.1cm}(x,y)\in \mathbb{R}^{N}\times\mathbb{R}^{N}.$$ 
 Therefore, using this above  information,  \eqref{ss4}, and Fatou's Lemma,  we get
\begin{equation*}
\begin{split}
\Phi(u)\leqslant \liminf_{j \to +\infty} g_{k_j}(x,y)= \Phi(u)- \limsup_{j \to +\infty}\int_{\mathbb{R}^{N}\times \mathbb{R}^{N}}\mathscr{A}\bigg(\frac{ \mu_{j}(x,y)-\mu(x,y)}{2}\bigg)K(x,y)\,dx\,dy.
\end{split}
\end{equation*}
Then 
\begin{equation}\label{lm1}
\limsup_{j \to +\infty}\int_{\mathbb{R}^{N}\times \mathbb{R}^{N}}\mathscr{A}\bigg(\frac{ \mu_{j}(x,y)-\mu(x,y)}{2}\bigg)K(x,y)\,dx\,dy\leqslant 0. 
\end{equation}
On the other hand, by $(a_{2})$, $(a_{3})$,  $(\mathcal{K})$, and Proposition \ref{lw0}, we infer that
\begin{equation}\label{sn}
\begin{split}
\int_{\mathbb{R}^{N}\times \mathbb{R}^{N}}\mathscr{A}\bigg(\frac{ \mu_{j}(x,y)-\mu(x,y)}{2}\bigg)K(x,y)\,dx\,dy \geqslant &  \frac{c_{\mathcal{A}}b_0}{2^{p^{-}}p^{+}} \inf \big\{ \| u_{k_j} - u \|_{\mathscr{W}}^{p^{-}}, \| u_{k_j} - u \|_{\mathscr{W}}^{p^{+}} \big\}\\\geqslant & 0  \mbox{  for all } j \in \mathbb{N}.
\end{split}
\end{equation}
Consequently, by \eqref{lm1} and \eqref{sn}, we achieve
\begin{equation*} 
\lim_{j\to+\infty}\| u_{k_j} - u \|_{\mathscr{W}} = 0. 
\end{equation*}
Therefore,   we can conclude that $u_{k_j} \to u$ in $\mathscr{W}$ as $j \to +\infty$. Since $(u_{k_j})_{j\in \mathbb{N}}$ is an arbitrary subsequence of $(u_k)_{k \in \mathbb{N}}$, this shows that  $u_k \to u$  as $k \to +\infty$ in  $\mathscr{W}$, as required.
  \end{proof}
 \section*{Acknowledgements}
  \hfill \break
 The first author was financed  by the  Coordena\c{c}\~ao de Aperfei\c{c}oamento de Pessoal de N\'ivel Superior(CAPES)-Finance Code 001. The second author was supported the  Coordena\c{c}\~ao de Aperfei\c{c}oamento de Pessoal de N\'ivel Superior(CAPES)-Finance Code 001. The third author named was partially supported by INCTMAT/CNPq/Brazil and CNPq/Brazil 307061/2018-3.

\end{document}